\documentclass[10pt,reqno,a4paper]{amsart}
\usepackage{bbm}
\usepackage{amsmath,amsthm,amssymb,amsfonts,bm,dsfont}
\usepackage{mathrsfs}
\usepackage[colorlinks, linkcolor=blue, citecolor=red]{hyperref}
\newtheorem{theorem}{Theorem}
\newtheorem{lemma}[theorem]{Lemma}

\newtheorem{proposition}[theorem]{Proposition}
\newtheorem{corollary}[theorem]{Corollary}

\newtheorem*{remarks*}{Remarks}
\newtheorem*{remark*}{Remark}
\numberwithin{equation}{section}
\numberwithin{theorem}{section}
\usepackage{graphicx}
\usepackage{xcolor}
\usepackage{academicons}
\usepackage{orcidlink}
\usepackage{enumitem,lipsum}

\newcommand{\pt}{\partial}

\newcommand{\T}{\mathbb{T}}
\newcommand{\R}{\mathbb{R}}
\newcommand{\N}{\mathbb{N}}
\newcommand{\C}{\mathbb{C}}
\newcommand{\Z}{\mathbb{Z}}
\newcommand{\be}{\begin{equation}}
\newcommand{\ee}{\end{equation}}
\newcommand{\D}{\mathbb{D}}
\newcommand{\weakto}{\rightharpoonup}
\newcommand{\ov}{\overline}
\newcommand{\eu}{\mathrm{e}}
\renewcommand{\O}{\mathcal{O}}

\newcommand{\eps}{\varepsilon}

\newcommand{\id}{\mathrm{Id}}

\newcommand{\LL}{\mathcal{L}}

\newcommand{\BB}{\mathsf{B}}
\newcommand{\Gc}{\mathcal{G}}
\newcommand{\Cc}{\mathcal{C}}
\renewcommand{\AA}{\mathsf{A}}
\newcommand{\dom}{\mathrm{Dom}}
\newcommand{\ran}{\mathrm{Ran}}
\renewcommand{\ker}{\mathrm{Ker}}
\newcommand{\Us}{\mathscr{U}}
\newcommand{\Ls}{\mathsf{L}}
\newcommand{\Bs}{\mathsf{B}}

\newcommand{\Ms}{\mathsf{M}}

\newcommand{\tr}{\mathrm{tr}}

\begin{document}

\title[Finite-time blow-up for CS-DNLS]{Finite-time blow-up for \\ the Calogero--Sutherland derivative NLS on $\T$}
\date{}
\author{Xi Chen}
\address{
Department of Mathematics and Computer Science,
University of Basel,
Spiegelgasse 1,
4051 Basel,
Switzerland
}
\email{xi01.chen@unibas.ch}

\author{Enno Lenzmann}
\address{
Department of Mathematics and Computer Science,
University of Basel,
Spiegelgasse 1,
4051 Basel,
Switzerland
}
\email{enno.lenzmann@unibas.ch}

\begin{abstract}
We study finite-time blow-up for the focusing Calogero--Sutherland derivative NLS on the torus with smooth initial data in the Hardy space $L^2_+(\T)$. For finite-gap potentials as initial data, the explicit solution formula reduces the dynamics to a finite-dimensional analytic family of contractions. This yields a complete description of every finite-time blow-up solution in this class: near blow-up time $T >0$, the solution decomposes into finitely many concentrating universal blow-up profiles and a smooth remainder, each bubble carries one unit of $L^2$-mass, the concentration points are distinct, and the blow-up rates are quantized according to
$$
\|u(t) \|_{H^s(\T)}
\sim_{s,u_0}
\frac{1}{(T-t)^{2s\nu}} \quad \mbox{as} \quad t \nearrow T \quad \mbox{for any $s > 0$}
$$
with some integer $\nu\geq1$.

Within an explicit subclass of finite-gap potentials, we identify an exact resonance condition that yields the existence of finite-time blow-up solutions. By a non-perturbative method, we construct single- and multi-bubble blow-up solutions with rate $\nu=1$ for every prescribed $L^2$-mass in $(1,\infty)\setminus \{ 2 \}$. In the complementary non-resonant regime, we prove global existence with uniform Sobolev bounds. The construction also shows instability of the blow-up solutions and non-chiral finite-time blow-up examples.

To the best of our knowledge, these results provide the first explicit, non-perturbative construction of finite-time blow-up for an integrable NLS-type equation on the torus, together with a complete classification of the singular dynamics within a natural finite-gap class.
\end{abstract}

\maketitle

\setcounter{tocdepth}{1}
\tableofcontents{}

\section{Introduction and main results}

\subsection{Setup and background}
We consider the {\em focusing Calogero--Sutherland derivative NLS} which can be written as
\begin{equation*} \tag{CS} \label{eq:CS}
\boxed{i \pt_t u = -\pt_x^2 u - 2 D \Pi(|u|^2) u \quad \mbox{with} \quad (t,x) \in \R \times \T}
\end{equation*}
Here $\T = \R/2 \pi \Z$ denotes the one-dimensional torus and we denote $D=-i \pt_x$. The operator $\Pi$ stands for the Cauchy--Szeg\H{o} projector, i.e.,
$$
\Pi \left ( \sum_{n \in \Z} \widehat{u}(n) \eu^{in x} \right ) = \sum_{n \geq 0} \widehat{u}(n) \eu^{inx} \quad \mbox{with} \quad \widehat{u}(n) = \frac{1}{2\pi} \int_0^{2 \pi} u(x) \eu^{-inx} \, dx,
$$
which is the orthogonal projection from $L^2(\T)$ onto the Hardy space
$$
L^2_+(\T) = \left \{ u \in L^2(\T) : \mbox{$\widehat{u}(n) = 0$ for $n <0$} \right \}.
$$
We recall the essential fact that elements in $L^2_+(\T)$ can be canonically identified via boundary values with the space of holomorphic functions $f : \D \to \C$ such that $f(z) = \sum_{n \geq 0} f_n z^n$ with $\sum_{n \geq 0} |f_n|^2< \infty$. In our analysis, we {will make strong use} of this classical fact; see \cite{GaMaRo} for a general background on Hardy spaces. Furthermore, we will be mainly interested in so-called {\em chiral solutions} of \eqref{eq:CS}, i.e., {solutions whose initial data belong} to the Hardy--Sobolev spaces
$$
H^s_+(\T) := L^2_+(\T) \cap H^s(\T) \quad \mbox{and} \quad H^\infty_+(\T) := \bigcap_{s \geq 0} H^s_+(\T),
$$  
where \eqref{eq:CS} features an explicit formula for its solutions. However, we shall also state some results for the non-chiral case, where {membership in} $L^2_+(\T)$ will be dropped; see below.

\medskip
Let us mention  that \eqref{eq:CS} formally arises from considering the continuum limit of discrete Calogero--Moser systems, which form an intriguing class of classically completely integrable systems; see \cite{AbBeWi} for a formal derivation of \eqref{eq:CS} in the physics literature as well as \cite{Ma-23} for a formal analysis of so-called multiphase solutions. With regard to physical applications, let us also mention that the  defocusing variant of \eqref{eq:CS}, where the sign `$-$' in front of {the} nonlinearity is changed to `$+$', {was} previously derived in \cite{PeGr-95} under the name `intermediate nonlinear Schr\"odinger equation' as an effective model in fluid dynamics. 

\medskip
For the mathematical study of \eqref{eq:CS}, the first rigorous results were obtained by Badreddine in \cite{Ba-24,Ba-25}. More
precisely, she established the following facts.
\begin{itemize}
\item {\em Lax pair structure} on $L^2_+(\T)$ with the self-adjoint Lax operator 
\be \label{def:Lax}
L_{u_0} = D - T_{u_0} T_{\bar{u}_0}
\ee 
with  the Toeplitz operator $T_g f = \Pi (gf)$ with symbol $g \in L^\infty(\T)$.
\item {\em Global well-posedness} in $H^s_+(\T)$ for any $s \geq 0$ for initial data $u_0$ below the critical $L^2$-mass threshold
\be \label{ineq:small_mass}
 \|u_0\|_{L^2}<1.
 \ee
\item Derivation of an \emph{explicit formula} for solutions of \eqref{eq:CS}; see \eqref{eq:EF} below.
\item Complete classification of {\em finite-gap potentials}; see Subsection \ref{subsec:finite_gap} {below}.
\end{itemize}
{{Condition \eqref{ineq:small_mass}, together with the explicit formula, plays a pivotal role}} in \cite{Ba-24}. We also mention the further recent work \cite{AlBrChDo-24, ABLa-26}, where the explicit formula for \eqref{eq:CS} is used to design a new numerical scheme.

\subsubsection*{Comparison to the real line problem} When $\T$ is replaced by $\R$, we remark that \eqref{eq:CS} is usually referred to as the focusing
{\em Calogero--Moser derivative NLS} given by
\begin{equation*} \label{eq:CM} \tag{CM}
i \pt_t u = -\pt_x^2 u - 2 D \Pi_{\R} (|u|^2) u \quad \mbox{with} \quad (t,x) \in \R \times \R,
\end{equation*}
with the corresponding Cauchy--Szeg\H{o} projector on the real line, i.e., in Fourier space we have
$$
\widehat{(\Pi_\R {f})}(\xi) = \mathbf{1}_{\{ \xi \geq 0 \}} \widehat{f}(\xi) \quad \mbox{for $f \in L^2(\R)$}.
$$
Not surprisingly, it turns out that \eqref{eq:CM} also features a Lax pair structure on the corresponding Hardy space 
$$
L^2_+(\R) = \{ f \in L^2(\R) : \mbox{$\widehat{f}(\xi) = 0$ for a.e.~$\xi < 0$} \}.
$$ 
In fact, the body of mathematical results on \eqref{eq:CM} is much more advanced when compared to \eqref{eq:CS}. For the reader's convenience, we give a brief summary as follows. The rigorous mathematical analysis of \eqref{eq:CM} was initiated in \cite{GeLe-24}, where  global well-posedness in $H^s_+(\R)=L^2_+(\R) \cap H^s(\R)$ with $s \geq 1$ was shown when $\|u_0\|_{L^2}^2 \leq 2 \pi$, together with the explicit construction of weakly turbulent $N$-solitons blowing up at infinite time. Later on, global well-posedness in the scaling-critical space $L^2_+(\R)$ with $\| u_0 \|_{L^2}^2 < 2 \pi$ was obtained in \cite{KiLaVi-25} by means of the explicit formula and equicontinuity arguments. On the other hand, smooth finite-time blow-up solutions for \eqref{eq:CM} were first constructed in \cite{KiKiKw-24} by a delicate perturbative analysis for smooth initial data in $L^2_+(\R)$ (and beyond Hardy space setting) with slightly supercritical $L^2$-mass; see also \cite{HoKo-24} for another blow-up result. Also, we mention the result in \cite{KiKw-24} on soliton resolution for \eqref{eq:CM}.  Further blow-up results for so-called non-chiral solutions, i.e., when the Hardy space assumption on the initial data is dropped, were obtained in \cite{JeKi-24, JeKiKiKw-26, KiKiKw-24}. More recently, an inverse scattering transform was proposed for \eqref{eq:CM} in \cite{Frank-Read-2025}. Finally, let us also mention the very recent work \cite{KiMaVi-26}, where the Hamiltonian structures for \eqref{eq:CS} and \eqref{eq:CM} are studied.

\medskip
Existing approaches to finite-time blow-up for \eqref{eq:CM} on $\R$ do not appear to adapt to the periodic problem. Our approach is instead based directly on the explicit formula for \eqref{eq:CS}, which turns the formation of singularities for finite-gap data into a finite-dimensional spectral problem. We describe this mechanism and its main consequences in detail below. In addition, let us mention that the use of explicit formulae for integrable PDEs (initiated by P.~G\'erard in \cite{Ge-23}) with Lax pairs on Hardy spaces (e.g.~the Benjamin--Ono equation, the cubic Szeg\H{o} equation, CS, CM, and the half-wave maps equation) {has} turned out to be extremely powerful, yielding results on scaling-critical global well-posedness, weak turbulence, zero-dispersion limits, classification of solitary waves, and soliton resolution; see \cite{Ba-24, Bad-24b, Ba-25, BKV-26, Chen-25, Chen-26, Ga-23, GaGe-26, GaGeMi-26, Ge-23, Ge-24, GeGr-15, GeHe-26, GeLe-24, GeLe-25, GeLe-26, GePu-23, GePu-24, HoKo-24, KiLaVi-25, Ma-26}. The present paper can be seen as a novel addition to this list by proving and analyzing finite-time blow-up via explicit formulae.  In our companion work \cite{ChLe-26}, we implement this strategy of constructing finite-time blow-up via an explicit formula for \eqref{eq:CM} posed on the non-compact domain $\R$.

\subsection{Finite-gap potentials}
\label{subsec:finite_gap}
As initial data $u_0$ for \eqref{eq:CS}, we consider the class of so-called {\em finite-gap potentials}, whose study was initiated in \cite{Ba-25}. Let us recall the defining property of these potentials. For $u_0 \in L^2_+(\T)$, the corresponding Lax operator 
$$
L_{u_0}=D - T_{u_0} T_{\bar{u}_0},
$$ 
can be defined via its quadratic form and it is found to be a self-adjoint operator on $L^2_+(\T)$ with a suitable domain $\dom (L_{u_0})$. {Due to compactness of its resolvent}, the spectrum $\sigma(L_{u_0})$ is discrete with eigenvalues
$$
\lambda_0(u_0) \leq \lambda_1(u_0) \leq \ldots \leq \lambda_{n}(u_0) \leq \lambda_{n+1}(u_0) \leq \ldots \to +\infty \quad \mbox{as $n \to \infty$}.
$$ 
We say that $u_0 \in L^2_+(\T)$ is a {\em finite-gap potential} if for some integer $n_0 \geq 1$, it holds
$$
\lambda_{n}(u_0) - \lambda_{n-1}(u_0) = 1 \quad \mbox{for $n \geq n_0$},
$$
i.e., the difference between two consecutive eigenvalues of $L_{u_0}$ is equal to 1, except for finitely many cases. From \cite{Ba-25} we recall the following result: The finite-gap potentials $u_0 \in L^2_+(\T)$ for \eqref{eq:CS} belong to one of the following categories.
\begin{itemize}
\item {\em Trivial traveling wave profiles} with $u_0(z) = C z^{N}$ with $C\in \C$ and integer $N \in \Z_{\geq 0}$.
\item {\em Rational functions} of the form\footnote{{Our convention for $p_1, \ldots, p_r$ differs from \cite{Ba-25} by complex conjugation.} In \cite{Ba-25}, the finite-gap potentials for the defocusing version of \eqref{eq:CS} are also classified, but we do not need this here.}
\be \label{eq:finite_gap}
\boxed{u_0(z) = z^{m_0} \prod_{j=1}^r \left ( \frac{z-p_j}{1-\bar{p}_j z} \right )^{m_j-1} \left ( a + \sum_{j=1}^r \frac{c_j}{1-\bar{p}_j z} \right )}
\ee
where $p_1, \ldots, p_r \in \D \setminus \{0 \}$ with $p_j \neq p_k$ for $j \neq k$ and, for some $N \in \N$, $0 \leq m_0 \leq N-1$, $1 \leq m_1, \ldots, m_r \leq N$ such that
$m_0 + \sum_{j=1}^r m_j = N$, and the parameters $a, c_1, \ldots, c_r \in \C$ satisfy the constraints
\be \label{eq:constraints}
{\bar{a} c_j + \sum_{k=1}^r \frac{c_j \bar{c}_k}{1- \bar{p}_j p_k} = m_j}
\ee
for $j=1, \ldots, r$ and $a \neq 0$ if $m_0 \neq 0$.
\end{itemize}
In particular, we see that any finite-gap potential $u_0 \in L^2_+(\T)$ is a rational function with poles outside the closed unit disk $\ov{\D}$ and hence $u_0$ is smooth on $\T$. From now on, we assume that $u_0 \in H^\infty_+(\T)$ without loss of generality. We remark that the class of finite-gap potentials is not limited to small $L^2$-mass, but that in fact any $L^2$-mass $0 < \| u_0 \|_{L^2}^2 < \infty$ can be reached within this class. Therefore, the class of finite-gap potentials appears as a natural testing ground for studying possible blow-up solutions of \eqref{eq:CS}, as we will see below.

\medskip
Suppose that $u \in C(I; H^\infty_+(\T))$ is a smooth solution of \eqref{eq:CS} with initial datum $u(0) = u_0 \in H^\infty_+(\T)$, which is not necessarily a finite-gap potential. In \cite{Ba-24}, it was shown that we have the following {\em explicit formula} given by
\begin{equation*} \tag{EF} \label{eq:EF}
\boxed{u(t,z) = \left \langle (\id - z {\eu^{-it}} \eu^{-2it L_{u_0}} S^*)^{-1} u_0, 1 \right \rangle \quad \mbox{for} \quad (t,z) \in I \times \D}{.}
\end{equation*}
Here $\langle f, g\rangle = \frac{1}{2 \pi} \int_{\T} f \ov{g}$ denotes the inner product on $L^2_+(\T)$, whereas $S^*$ is the {\em backward shift} (or {\em left shift}) acting on $L^2_+(\T)$ given by
$$
(S^* f)(z) = S^* \left ( \sum_{n=0}^\infty f_n z^n \right ) := \sum_{n=0}^\infty f_{n+1} z^n.
$$
For finite-gap potentials $u_0 \in H^\infty_+(\T)$, it turns out that the analysis of \eqref{eq:EF} reduces to a {\em finite-dimensional} subspace $W$, which is invariant under both $L_{u_0}$ and $S^*$, and contains the vectors $u_0$ and $1$. As a consequence, in the setting of finite-gap potentials, we obtain that \eqref{eq:EF} reduces to $W$ so that
\be
u(t,z) = \left \langle (\id - z A_t)^{-1} u_0, 1 \right \rangle \quad \mbox{for} \quad (t,z) {\in} I \times \D
\ee
with $t$-dependent family of contractions
$$
A_t := \eu^{-it \LL} S^*|_W \quad \mbox{with} \quad \LL := 2 L_{u_0} + \id
$$
acting on the invariant finite-dimensional subspace $W$. This fact will be pivotal for proving the following main results below:
\begin{enumerate}
\item Universal blow-up dynamics and quantized blow-up rates for finite-gap potentials $u_0 \in H^\infty_+(\T)$ as initial data.
\item Explicit construction of finite-time blow-up solutions for a suitable class of finite-gap potentials $u_0 \in H^\infty_+(\T)$ in \eqref{eq:finite_gap}.
\end{enumerate}
In the next subsections, we will state the main theorems which yield (1) and (2).

\subsection{Universality of blow-up for finite-gap potentials}

As our first main result, we obtain the following complete description of finite-time blow-up for \eqref{eq:CS} with finite-gap potentials as initial data.

\begin{theorem}[Universal blow-up dynamics and quantization of blow-up rates] \label{thm:universal}
Let $u_0 \in H^\infty_+(\T)$ be a finite-gap potential. Suppose  the solution $u \in C([0,T); H^\infty_+(\T))$ of \eqref{eq:CS} with $u(0) = u_0$ blows up at time $0 < T < \infty$. Then there exists an integer $1 \leq M < \| u_0 \|_{L^2}^2$ such that
$$
u(t,z) - \sum_{j=1}^M \frac{\beta_j(t)}{1- \omega_j(t)z} \rightarrow u_*(z) \quad \mbox{in} \quad H^\infty_+(\T) \quad \mbox{as} \quad t \nearrow T,
$$
where $\beta_1(t), \ldots, \beta_M(t)$ and $\omega_1(t), \ldots, \omega_M(t)$ are analytic functions defined for $t \in (T-\eps_0, T+ \eps_0)$ with some $\eps_0 > 0$ small, and we have
$$
{| \beta_j(t)|^2 = 1 - |\omega_j(t)|^2 + \O((T-t)^{2\nu_j+1}),}
$$
$$
|\omega_j(t)|^2 = 1-c_j (T-t)^{2\nu_j} + \O((T-t)^{2\nu_j + 1}),
$$
with some constants $c_j > 0$ and some integers $\nu_1, \ldots, \nu_M \geq 1$.

 Moreover, the following properties hold.

\begin{enumerate}
\item[(i)] \textbf{Quantization of blow-up rate and mass:} For any $s > 0$, we have
$$
\| u(t) \|_{H^s(\T)} \sim_{s,u_0} \frac{1}{(T-t)^{2s \nu}} \quad \mbox{as} \quad t \nearrow T
$$ 
with the integer $\nu = \max_{1 \leq j \leq M} \nu_j$. Moreover, it holds that
$$
\| u_* \| _{L^2}^2  =  \| u_0 \|_{L^2}^2 - M.
$$

\item[(ii)] \textbf{No bubble tower:} The limits $\omega_{j,*} = \lim_{t \nearrow T} \omega_j(t) \in \pt \D$ exist for $1 \leq j \leq M$ and satisfy $\omega_{j,*} \neq \omega_{k, *}$ for $k \neq j$.

\item[(iii)] \textbf{Non-triviality of remainder:} We have $u_* \neq 0$.
\end{enumerate}
\end{theorem}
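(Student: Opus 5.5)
The proof will work entirely with the finite-dimensional representation $u(t,z)=\langle(\id-zA_t)^{-1}u_0,1\rangle$ on $W$, with $A_t=\eu^{-it\LL}S^*|_W$. Two facts drive the argument. First, $A_t$ is entire (in fact a trigonometric polynomial) in $t$, since $\LL$ is self-adjoint on the finite-dimensional space $W$. Second, $A_t$ is a contraction. Hence $u(t,\cdot)$ is a rational function whose poles are the reciprocals $1/\omega$ of the eigenvalues $\omega$ of $A_t$, and these satisfy $|\omega|\le1$. Blow-up at time $T$ means precisely that some eigenvalues reach $\pt\D$ as $t\nearrow T$. I would first show that for $t<T$ all eigenvalues lie in $\D$; otherwise there would be a pole on $\T$, contradicting smoothness of $u(t)$.

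Next I would describe the eigenvalues near $T$. Let $\omega_{1,*},\dots,\omega_{M}$ be the eigenvalues of $A_T$ on $\pt\D$. For a contraction, any eigenvalue of modulus one is semisimple and its eigenvectors $v$ satisfy $\|A_Tv\|=\|v\|$; in particular $S^*v$ has full norm, which forces $v\perp 1$ in the relevant sense. The key step is an analytic perturbation argument in the spirit of Kato--Rellich. The family $A_t$ is analytic in $t$, and one uses $A_t^*A_t=S|_W^*\ldots$, that is, $\id-A_t^*A_t$ is the rank-one projection onto the constants (restricted to $W$). Combined with a Lidskii/Puiseux analysis, this should show that the unimodular eigenvalues split into analytic branches $\omega_j(t)$.

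For each branch, $1-|\omega_j(t)|^2=\|v_j\|^{-2}|\langle v_j(t),1\rangle|^2$, where $v_j(t)$ is an analytic eigenvector; this identity follows from the defect formula $\|v\|^2-\|A_tv\|^2=|\langle v,1\rangle|^2$. The right-hand side is the modulus squared of an analytic function that vanishes at $T$ and does not vanish identically, because $|\omega_j|<1$ for $t<T$. Its vanishing order is $\nu_j\ge1$, which gives $|\omega_j|^2=1-c_j(T-t)^{2\nu_j}+\dots$. I expect the main obstacle to be this step. One must exclude nontrivial Jordan blocks and genuine Puiseux branching at $T$, and show distinctness of the limit points $\omega_{j,*}$ (no bubble tower). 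My first attempt would use that a contraction with a unimodular eigenvalue splits orthogonally: $\ker(A_T-\omega_*)$ reduces $A_T$. The rank-one defect then forces each unimodular eigenvalue of $A_T$ to be simple, since two eigenvectors would give a combination orthogonal to $1$ that lies in a subspace that is "inner", leading to a contradiction via the cyclic structure of $S^*$ restricted to $W$. Simplicity gives both analyticity of the branches and the separation $\omega_{j,*}\neq\omega_{k,*}$.

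Then I would take the partial fraction decomposition via Riesz projections $P_j(t)$. This gives
\[
u(t,z)=\sum_j\frac{\langle P_ju_0,1\rangle}{1-\omega_j(t)z}+\langle(\id-zA_t)^{-1}(\id-\textstyle\sum P_j)u_0,1\rangle .
\]
The remainder converges in $H^\infty_+$, because the remaining eigenvalues stay in a disk of radius $<1$. Set $\beta_j=\langle P_ju_0,1\rangle$. The relation $|\beta_j|^2=1-|\omega_j|^2+\O(\cdot)$ should follow from the defect identity together with conservation of the mass $\|u\|_{L^2}^2=\|u_0\|^2$, or alternatively from the normalization of the left and right eigenvectors. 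The Sobolev rates then come from $\|\beta/(1-\omega z)\|_{\dot H^s}^2\sim|\beta|^2(1-|\omega|^2)^{-2s-1}\sim(1-|\omega|)^{-2s}$, using that the bubbles separate. The mass identity follows because each bubble has norm $|\beta_j|^2/(1-|\omega_j|^2)\to1$, cross terms vanish weakly, and mass is conserved. Finally, $u_*\neq0$: if $u_*=0$, the mass would be the integer $M$, and $M<\|u_0\|^2$ requires an argument. I would aim to show that $u_*$ contains the nonzero component of $u_0$ generating $W$ away from the unimodular eigenspaces, using that $1$ is cyclic and cannot lie in the span of the $P_j$-ranges.
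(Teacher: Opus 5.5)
There is a genuine gap: your proposal never establishes the identity that drives the whole theorem, namely the virial identity $|\langle u_0,f\rangle|=1$ for every normalized $f\in W$ with $A_Tf=\omega f$ and $|\omega|=1$ (Lemma~\ref{lem:Wu}). The paper obtains it from the Lax structure, not from the contraction structure. First, $\|S^*f\|=1$ gives $f\perp1$ and $SS^*f=f$. Since $\eu^{-iT\LL}$ commutes with $L_{u_0}$, this yields $\langle L_{u_0}S^*f,S^*f\rangle=\langle L_{u_0}f,f\rangle$. On the other hand, the commutator identity $[S^*,L_{u_0}]=S^*-\langle\cdot,u_0\rangle S^*u_0$ shows that the same quantity equals $\langle L_{u_0}f,f\rangle-(1-|\langle u_0,f\rangle|^2)$.

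Your substitute cannot give simplicity. You propose to use the rank-one defect $\id-A_T^*A_T=\langle\cdot,1\rangle1$ together with ``the cyclic structure of $S^*|_W$''. But every contraction with this defect has the form $US^*|_W$ for some unitary $U$, and such operators can have degenerate unimodular eigenvalues. For example, on $W=\mathrm{span}\{1,z,z^2\}$ take $U:1\mapsto z\mapsto z^2\mapsto1$; then $US^*|_W=\mathrm{diag}(0,1,1)$. Geometric simplicity comes precisely from the fact that $f\mapsto\langle f,u_0\rangle$ has modulus one on unit eigenvectors, hence is injective on the eigenspace. Distinctness of the $\omega_{j,*}$ (no bubble tower) then follows from the orthogonality of the $f_j$.

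The same identity is silently needed in several other steps of your plan.

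(a) You claim $A_t$ has no unimodular eigenvalue for $t<T$, since otherwise $u(t)$ would have a pole on $\T$. This is false as stated. A unimodular eigenvector $f$ of a contraction satisfies $f\perp1$ and $A_t^*f=\bar\omega f$. So its Riesz projection is $\langle\cdot,f\rangle f$, the residue $\langle u_0,f\rangle\langle f,1\rangle$ vanishes, and $u(t)$ stays smooth. The correct mechanism is mass loss through the stability identity $\|\Us(t)u_0\|^2=\|u_0\|^2-\lim_n\|A_t^nu_0\|^2$. Here $A_t^nu_0\not\to0$ exactly because $\langle u_0,f\rangle\neq0$, which is the virial identity. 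Without this you cannot exclude branches with $|\omega_j(t)|\equiv1$, and so you get no finite vanishing order $\nu_j$.

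(b) Normalizing the left and right eigenvectors only gives $|\beta_j|^2=(|\langle u_0,f_j\rangle|^2+\mathcal{O}(\eps))(1-|\omega_j|^2)$. Mass conservation in the limit gives $\|u_0\|^2=\sum_j|\langle u_0,f_j\rangle|^2+\|u_*\|^2$. This is just Pythagoras for the orthogonal projection onto $W_{\mathrm u}$, and it says nothing about the individual numbers $|\langle u_0,f_j\rangle|^2$. So the relation $|\beta_j|^2=1-|\omega_j|^2+\dots$, the statement that each bubble carries mass exactly $1$, and the identity $\|u_*\|^2=\|u_0\|^2-M$ all require $|\langle u_0,f_j\rangle|=1$. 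Your argument that each bubble has norm tending to $1$ presupposes it.

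(c) The bound $M<\|u_0\|^2$ and the conclusion $u_*\neq0$ require $u_0\notin W_{\mathrm u}$. Your ``$1$ is cyclic'' is false here: $W_{\mathrm u}$ is orthogonal to the $A_T^*$-cyclic subspace generated by $1$, and $A_T1=0$. The paper instead uses the lowest Fourier mode $a_mz^m$ of $u_0$. Since $T_{\bar u_0}$ kills $\mathrm{span}\{1,\dots,z^{m-1}\}$, we have $L_{u_0}=D$ there, and one computes $\langle A_T^mu_0,1\rangle=\eu^{-iTm^2}a_m\neq0$. This contradicts $A_T^ku_0\in W_{\mathrm u}\perp1$.

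Your observation that $1-|\omega_j|^2=|\langle f_j(\eps),1\rangle|^2$ has even vanishing order is correct and matches the paper. The Riesz-projection decomposition with a remainder converging in $H^\infty_+$ also matches the paper.
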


\begin{remarks*}
{\em 1) With obvious changes, the corresponding result holds true for blow-up of $u(t)$ at finite negative time. Also, we note the interesting fact {that} the functions $\beta_j(t)$ and $\omega_j(t)$ are analytic for $t$ close to $T$; in particular, these functions are well-defined past the blow-up time.

2) Note the strict inequality $M < \| u_0 \|_{L^2}^2$ for the integer $M \geq 1$. In particular, this shows that no finite-time blow-up solution for \eqref{eq:CS} with minimal mass $\|u_0 \|_{L^2}^2 = 1$ can exist, within in the class of finite-gap potentials. In our companion work \cite{ChLe-26}, we largely extend this result beyond the setting of finite-gap potentials, by ruling out blow-up in finite and infinite time for solutions $u \in C(I; H^s_+(\T))$ of \eqref{eq:CS} for any $s > 3/2$ with minimal mass $\| u_0 \|_{L^2}^2 = 1$.

3) The parameters $\omega_1(t), \ldots, \omega_M(t)$ defined for $t$ close to $T$ correspond to the eigenvalues of the contraction $\eu^{-it \LL} S^*|_W$, where the limits $\omega_{j,*} \in \pt \D$ are the simple unimodular eigenvalues of $\eu^{-iT \LL} S^* |_W$ at blow-up time $T$. A key fact proven below is that these unimodular eigenvalues must be (algebraically) simple, and hence standard perturbation theory for simple eigenvalues for analytic families of matrices can be applied. The blow-up rate of $\| u(t) \|_{H^s}$ is then determined by the order at which the perturbed eigenvalues $\omega_1(t), \ldots, \omega_M(t)$ approach the unit circle $\pt \D$ as $t \nearrow T$. By analyticity, {these rates} must be given by some integers $\nu_1, \ldots, \nu_M \geq 1$ and hence $\nu = \max_{1 \leq j \leq M} \nu_j$ determines the blow-up rate.

{4) The construction of finite-gap blow-up solutions with higher integer rates $\nu\geq2$ is not pursued in the present paper.}

5) Note also the recent result in \cite{JeKiKiKw-26}, which proves a dichotomy {between quantized and `exotic' blow-up rates} for \eqref{eq:CM} on $\R$ by using a completely different strategy. It is an interesting open question whether the analogous result (or some sharpened version) can be shown in the torus setting beyond the case of finite-gap potentials.
}
\end{remarks*}

\subsection{Existence of finite-time blow-up}
We now turn to the existence of finite-time blow-up for finite-gap potentials. To give an affirmative answer to this question, we consider the following special case of \eqref{eq:finite_gap} with
\be
\label{eq:finite_gap_2}
u_0(z) = z^{m} \beta_p(z) \left (a + \frac{c}{1- \bar{p} z} \right ) \quad \mbox{with} \quad \beta_p(z) := \frac{z-p}{1- \bar{p} z}
\ee
with some integer $m \geq 0$, $p \in \D \setminus \{0 \}$ and $a,c \in \C$ satisfying the constraint
\be \label{eq:constraints2}
\bar{a} c+ \frac{|c|^2}{1-|p|^2} = 2
\ee
with $a\neq 0$ if $m \neq 0$. {{Notice that this corresponds to \eqref{eq:finite_gap} with $m_0=m$, $r=1$, $m_1 =2$, and integer $N=m + 2 \geq 2$.}} To further locate  blow-up data in this subclass, the analysis below will show that we need to additionally impose the `resonance' condition given by
\be \label{eq:resonance}
2a + c = 0.
\ee
Assuming this extra condition, a straightforward calculation yields
\be \label{eq:ac_blowup}
 a = \eu^{i \phi} \sqrt{\frac{1-|p|^2}{1 + |p|^2}} \quad \mbox{and} \quad c = -2 \eu^{i \phi} \sqrt{\frac{1-|p|^2}{1+|p|^2}}
\ee
for some $\phi \in \T$. Thus we are finally led to finite-gap potentials of the form
\be \label{def:u0_blowup}
\boxed{u_0(z) = \eu^{i \phi} \sqrt{\frac{1-|p|^2}{1+|p|^2}} z^{m} \beta_p(z) \left ( 1 - \frac{2}{1-\bar{p}z} \right )}
\ee 
where $\phi \in \T$, $m \in \mathbb{Z}_{\ge 0}$, and $p \in \D \setminus \{ 0 \}$ can be chosen arbitrarily. For the $L^2$-mass of the blow-up initial data $u_0$ in \eqref{def:u0_blowup}, it is {easily checked} that
 \be \label{ineq:L2_mass_single}
    1 < \| u_{0} \|_{L^2}^2 = \frac{1+3|p|^2}{1+|p|^2} < 2
 \ee
as $|p|$ ranges over $(0,1)$. Thus the $L^2$-mass of the family of initial data as above covers the entire interval $(1,2)$.

\begin{theorem}[Existence of finite-time blow-up, single-bubble case]
\label{thm:blowup}Let \(u_0 \in H^\infty_+(\T)\) be a finite-gap potential given by \eqref{def:u0_blowup} with {some $\phi\in\T$, $m\in\Z_{\geq0}$, and $p\in\D\setminus\{0\}$}. Then the corresponding solution $u \in C([0,T); H^\infty_+(\T))$ of \eqref{eq:CS} with $u(0)=u_0$ blows up at finite time
$$
{T = \frac{\pi(1-|p|^2)}{4|p|} > 0.}
$$

Moreover, the conclusions of Theorem \ref{thm:universal} hold  with 
$$
M=1 \quad \mbox{and} \quad \nu = 1.
$$
Consequently, we have
$$
u(t,z) - \frac{\beta(t)}{1-\omega(t)z} \to u_*(z) \quad \mbox{in} \quad  H^\infty_+(\T) \quad \mbox{as} \quad t \nearrow T,
$$
where the functions $\beta(t)$ and $\omega(t)$ defined for $t$ close to $T$ satisfy
$$
|\beta(t)|^2 = 1- |\omega(t)|^2 + \O((T-t)^3), \quad |\omega(t)|^2 = 1- c (T-t)^2 + \O((T-t)^{3})
$$
with some constant $c > 0$. Furthermore, for any $s > 0$, it holds
$$
\| u(t) \|_{H^s} \sim_{s,u_0} \frac{1}{(T-t)^{2s}} \quad \mbox{as} \quad t \nearrow T,
$$
and we have $\| u_* \|_{L^2}^2 = \| u_0 \|_{L^2}^2 - 1$ and $u_* \neq 0$.
\end{theorem}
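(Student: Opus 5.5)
We reduce the explicit formula \eqref{eq:EF} to the finite-dimensional invariant subspace $W$ and analyse the spectrum of the contraction $A_t = \eu^{-it\LL}S^*|_W$ directly. For $u_0$ of the form \eqref{def:u0_blowup} with $N=m+2$, we expect $W$ to be spanned by the model-space vectors $1, z, \ldots, z^{m}$ together with $z^{m}\beta_p$-type vectors $\frac{z^m}{1-\bar p z}$ and $\frac{z^{m+1}}{1-\bar p z}$ (roughly the model space $K_{z^{m+1}\beta_p^2}$ or a close variant). This is the space generated by $u_0$ and $1$ under $S^*$ and $L_{u_0}$. In this space $S^*$ is nilpotent on the monomial part and has the eigenvalue $\bar p$ on the Blaschke part. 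We then compute the matrix of $L_{u_0}$ in a convenient basis. Following the finite-gap structure from \cite{Ba-25}, the spectrum of $L_{u_0}$ on $W$ is a string of consecutive levels plus one "gap" pair. Hence $\LL$ has eigenvalues that are odd integers, except for a two-dimensional block where the gap size (depending on $|p|$, $a$, $c$) enters.

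Next we use the product structure. Since the phases of $\eu^{-it\LL}$ on the integer-spaced part are periodic, the interesting dynamics happens on an effectively $2\times 2$ block. There $A_t$ behaves like $\eu^{-it\LL_2}$ times a compression of $S^*$ with entries involving $\bar p$ and $\sqrt{1-|p|^2}$. The solution blows up at time $T$ exactly when $A_T$ acquires a unimodular eigenvalue $\omega_*$. Indeed, $u(t,z)=\langle (\id-zA_t)^{-1}u_0,1\rangle$ is then no longer smooth on $\T$, while for $t<T$ the spectral radius of $A_t$ is $<1$ and $u(t)$ is a rational function with poles outside $\ov{\D}$. Since $A_t$ is a contraction, a unimodular eigenvalue forces its eigenvector $v$ to satisfy $\|S^*v\|=\|v\|$, i.e.\ $v \perp \ker S^*$ in an appropriate sense, or equivalently $\langle v,1\rangle=0$. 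This gives an algebraic condition on $t$. Writing out $\det(\id-\bar\omega A_t)=0$ with $|\omega|=1$ and the isometry condition, we reduce to a trigonometric equation. The resonance condition \eqref{eq:resonance} $2a+c=0$ should be precisely what makes the relevant eigenvector compatible, for instance by making a certain inner product vanish. The first solution should then be $\sin$ or $\tan$ of $\frac{4|p|t}{1-|p|^2}$ hitting a critical value, giving $T=\frac{\pi(1-|p|^2)}{4|p|}$. This step is the main obstacle: finding the right basis so that the computation closes, and verifying both that no earlier unimodular eigenvalue appears on $(0,T)$ and that one appears at $T$. I would first diagonalise $L_{u_0}$ on $W$ explicitly using the constraint \eqref{eq:constraints2} and the values \eqref{eq:ac_blowup}.

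Once blow-up at $T$ is established, Theorem~\ref{thm:universal} applies and yields the bubble decomposition. Because only one unimodular eigenvalue can emerge from the two-dimensional block, we get $M=1$; alternatively, use $M<\|u_0\|_{L^2}^2<2$ from \eqref{ineq:L2_mass_single}. For $\nu=1$ we compute the second-order behaviour of the perturbed eigenvalue. The first-order eigenvalue perturbation $\frac{d}{dt}|\omega(t)|^2$ vanishes at $T$, since $|\omega|\le 1$ on both sides by contractivity. The coefficient $c$ is then given by the second derivative, which we express through $\|(\id - P)\LL v\|^2$ or an equivalent quantity. We show it is nonzero because $\LL v$ is not parallel to $v$, as $v$ is not an eigenvector of $L_{u_0}$. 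Finally, the remaining claims follow from Theorem~\ref{thm:universal}(i) and (iii): the rate $(T-t)^{-2s}$, the identity $\|u_*\|_{L^2}^2=\|u_0\|_{L^2}^2-1$, and $u_*\neq0$.
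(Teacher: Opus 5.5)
You have the right architecture: reduce \eqref{eq:EF} to $W$, use Lemma~\ref{lem:blow_up_criterion}, then apply Theorem~\ref{thm:universal} with $M=1$ from $\|u_0\|_{L^2}^2<2$. However, the two steps that carry the content of the theorem are either missing or wrong.

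\textbf{Blow-up time.} That $A_t$ has a unimodular eigenvalue at $t=T$ and none on $(0,T)$ is only conjectured in your proposal (``should'', ``main obstacle''). The structure you propose for the computation is also off. The vectors $1,\dots,z^m$, $z^m/(1-\bar pz)$, $z^{m+1}/(1-\bar pz)$ are linearly dependent, since $\bar p\,z^{m+1}/(1-\bar pz)=z^m/(1-\bar pz)-z^m$. They also miss the double-pole direction of $W=K_{z^{m+1}\beta_p^2}$: for example $e_1=z(1-\bar pz)^{-2}\in W$, which together with $e_0=(1-\bar pz)^{-1}$ carries a $2\times2$ Jordan block of $S^*$ at $\bar p$. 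Periodicity of phases on an integer-spaced part does not by itself reduce anything to a $2\times2$ problem.

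What makes the computation close in the paper is an invariant-subspace structure. For $m=0$, $W_0=\mathrm{span}\{e_0,e_1\}$ is invariant under both $L_{u_0}$ and $S^*$. Moreover $\psi_{u_0}$ is an eigenfunction of $L_{u_0}$ with $\langle S^*\psi_{u_0},\psi_{u_0}\rangle=0$. Hence $\sigma(A_t)=\eu^{-it}\bigl(\{0\}\cup\sigma(\eu^{-2it\Ls_2}\Bs_2)\bigr)$, with the explicit matrices of Proposition~\ref{prop:LB_matrix}. The case $m\geq1$ is reduced to $m=0$ by the Galilean symmetry $\Gc_m$, not by larger matrices.

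After the substitution $z=\bar p\,\eu^{-i\tau\beta/2}w$, the characteristic polynomial becomes $w^2-2C_\tau w+1$. A unimodular root exists iff the ellipse relation \eqref{eq:secular2} holds. The resonance $2a+c=0$ does not act by ``making an inner product vanish''. Since $\delta^2=(2q+1)^2+4r/(1-r)^2$ with $q=a/c$, it is exactly the condition that the eigenvalue gap $\delta$ of $\Ls_2$ is minimal for the given $|p|$. This makes the $\sin^2$-coefficient in \eqref{eq:secular2} equal to $1$ and forces $\cos(t\delta)=0$, i.e.\ $t=(2\ell+1)T$. Without this computation, neither the value of $T$ nor the absence of earlier contact with $\pt\D$ is established.

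\textbf{Rate $\nu=1$.} Your argument here fails. Set $\eps=T-t$ and $f=f(0)$, so $f\perp1$. Then $1-|\omega(\eps)|^2=|\langle f(\eps),1\rangle|^2$, and the coefficient is $c=|\langle f'(0),1\rangle|^2$. Here $(\AA_0-\omega)f'(0)=-i\omega g$, with $g=(\id-P_f)\Ls f$ and $P_f$ the orthogonal projection onto $\C f$. A second-order expansion gives $|\omega(\eps)|^2-1=\eps^2\bigl(\|g\|^2-2\,\mathrm{Im}\langle f'(0),g\rangle\bigr)+\O(\eps^3)$. So $\|g\|^2$ is compensated by a term involving the resolvent of $\AA_0$ on $f^\perp$, and $g\neq0$ does not imply $\langle f'(0),1\rangle\neq0$.

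Note also that $f$ can never be an eigenvector of $\LL$ at a blow-up time, since otherwise $A_tf$ would stay a unimodular multiple of $f$ for all $t$. So your reasoning would give $\nu_j=1$ for \emph{every} finite-gap blow-up, which Theorem~\ref{thm:universal} does not assert.

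In the paper, $\nu=1$ comes from the explicit relation $\frac{4rs}{(s+r)^2}\cos^2x+\frac{s(1-r)^2}{(s-r)^2}\sin^2x=1$ for $s=|z_+(\tau)|^2$. Expanding it near $s=1$, $x=\pi/2$ yields the coefficient $4r(1-r)/(1+r)^3>0$ in Proposition~\ref{prop:expansion_resonance}.
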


\begin{remark*}{\em 
  In \cite{ChLe-26b}, we construct an explicit family of smooth rational data that lead to finite-time blow-up solutions of \eqref{eq:CM} on $\R$ by using the corresponding explicit formula on the real line. The approach in \cite{ChLe-26b} is again non-perturbative and extends the recent results in \cite{KiKiKw-24}.
 } 
 \end{remark*}
 
 Next, we prove existence of {multi-bubble blow-up solutions} for any given integer $M \geq 2$ in Theorem \ref{thm:universal}. Here, we observe that the single-bubble blow-up solutions obtained in Theorem \ref{thm:blowup} can be transformed into multi-bubble blow-up solutions by means of the {\em covering symmetry} given by
 \be \label{eq:cov}
 \boxed{\Cc_k u(t,z) := k^{1/2} u(k^2 t, z^k) }
 \ee
 for any fixed integer $k \geq 1$, which is seen to map solutions of \eqref{eq:CS} into solutions. {{See Appendix \ref{sec:app} for details; we include the trivial case $k=1$ for notational convenience.}} Note, however, that the covering transform $\Cc_k$ does not preserve the $L^2$-mass for $k \geq 2$, but rather we have the relation
 \be
 \| \Cc_k u(t) \|_{L^2}^2 = k \| u(t) \|_{L^2}^2 = k \| u_0 \|_{L^2}^2{.}
 \ee
 Based on this covering symmetry for \eqref{eq:CS}, we can derive explicit multi-bubble blow-up solutions emanating from the single-bubble blow-up solutions found in Theorem \ref{thm:blowup}. Here is the precise statement.
 
 \begin{theorem}[Existence of finite-time blow-up, multi-bubble case] \label{thm:blowup_multi}
Suppose that $k \geq 1$ is an integer and let 
$$
u_0^{(k)}(z) := k^{1/2} u_0(z^k) \quad \mbox{for $z \in \D$},
$$
where $u_0 \in H^\infty_+(\T)$ is a finite-gap potential as in Theorem \ref{thm:blowup} above. Then $u_0^{(k)} \in H^\infty_+(\T)$ is a finite-gap potential and the corresponding solution $u^{(k)} \in C([0,T_k); H^\infty_+(\T))$ of \eqref{eq:CS} is given by \eqref{eq:cov} and blows up at finite time 
$$
T_k = \frac{1}{k^2} \cdot \frac{ \pi(1-|p|^2)}{4|p|} > 0. 
$$
Moreover, the conclusions of Theorem \ref{thm:universal} hold with
$$
M=k \quad \mbox{and} \quad \nu_1 = \ldots = \nu_M = 1.
$$
In addition, the limits $\omega_{1,*}, \ldots, {\omega_{M,*}} \in \pt \D$ form a regular $k$-gon given by
$$
\omega_{j,*} = \omega_* \eu^{2\pi i (j-1)/k} \quad \mbox{for} \quad j=1, \ldots, M,
$$
with some $\omega_* \in \pt \D$.
 \end{theorem}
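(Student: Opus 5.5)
The plan is to reduce everything to Theorem \ref{thm:blowup} via the covering symmetry \eqref{eq:cov}. First, I will check that $u_0^{(k)}(z)=k^{1/2}u_0(z^k)$ is a finite-gap potential. One route is to write it in the form \eqref{eq:finite_gap}. The substitution $z\mapsto z^k$ turns $z^m$ into $z^{km}$. The factor $\beta_p(z^k)$ factors as a Blaschke product $\prod_{l}\frac{z-q_l}{1-\bar q_l z}$ over the $k$ distinct $k$-th roots $q_l$ of $p$, and $\frac{1}{1-\bar p z^k}$ decomposes into partial fractions in the $\frac{1}{1-\bar q_l z}$. Each $q_l$ then enters with multiplicity $m_l=2$.

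It is cleaner, however, to argue spectrally. Decompose $L^2_+(\T)=\bigoplus_{r=0}^{k-1}z^r L^2_+(\T)(z^k)$ and let $U_r:f\mapsto z^r f(z^k)$. A direct computation should give $L_{u_0^{(k)}}U_r = U_r\,(k L^{(r)})$, where $L^{(r)}$ is $L_{u_0}$ twisted by $r/k$, i.e. $D + r/k - T_{u_0}T_{\bar u_0}$ on the relevant sector; the Toeplitz parts intertwine since $k^{1/2}$ squared gives the factor $k$. Hence the eigenvalues of $L_{u_0^{(k)}}$ are $\{k\lambda_n(u_0)+r\}$. Since $\lambda_n-\lambda_{n-1}=1$ eventually, this union over $r$ has gaps equal to $1$ eventually, so $u_0^{(k)}$ is finite-gap.

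Next, Appendix \ref{sec:app} gives that $\Cc_k u$ solves \eqref{eq:CS}, with $u^{(k)}=\Cc_k u$ and initial datum $u_0^{(k)}$. By uniqueness of smooth solutions, $u^{(k)}(t,z)=k^{1/2}u(k^2t,z^k)$ on $[0,T/k^2)$, where $T$ is the blow-up time from Theorem \ref{thm:blowup}. Since $\|f(z^k)\|_{\dot H^s}=k^s\|f\|_{\dot H^s}$ (the Fourier support is dilated by $k$), $\|u^{(k)}(t)\|_{H^s}\sim\|u(k^2t)\|_{H^s}$. This shows that $u^{(k)}$ blows up exactly at $T_k=T/k^2$ with the same rate $(T_k-t)^{-2s}$ up to constants, so $\nu=1$.

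To identify the bubbles, I insert the decomposition from Theorem \ref{thm:blowup}:
\[
k^{1/2}\frac{\beta(k^2t)}{1-\omega(k^2t)z^k}=\sum_{j=1}^k\frac{\beta_j(t)}{1-\omega_j(t)z},
\qquad \omega_j(t)=\omega(k^2t)^{1/k}\eu^{2\pi i(j-1)/k},\quad \beta_j(t)=k^{-1/2}\beta(k^2t).
\]
This is the partial fraction identity $\frac{1}{1-w^k z^k}=\frac1k\sum_j\frac{1}{1-\zeta^j w z}$. The branch of the $k$-th root is analytic near $T_k$ because $\omega(T)\neq0$. We then get
\[
|\omega_j|^2=(1-c(T-k^2t)^2+\dots)^{1/k}=1-\tfrac{ck^3}{k}(T_k-t)^2+\O((T_k-t)^3),
\]
so $\nu_j=1$. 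Also $|\beta_j|^2=k^{-1}|\beta|^2=k^{-1}(1-|\omega|^{2})+\O((T_k-t)^3)$. Since $1-|\omega_j|^{2}=1-(|\omega|^2)^{1/k}\approx k^{-1}(1-|\omega|^2)$, the required relation $|\beta_j|^2=1-|\omega_j|^2+\O((T_k-t)^3)$ holds. The remainder is $k^{1/2}u_*(z^k)\ne0$, and its mass is $k(\|u_0\|^2-1)=\|u_0^{(k)}\|^2-k$, which gives $M=k$. The limits $\omega_{j,*}=\omega_*\eu^{2\pi i(j-1)/k}$ with $\omega_*=\omega(T)^{1/k}$ form the regular $k$-gon.

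The main obstacle is the finite-gap verification, specifically the intertwining computation of $T_{u_0^{(k)}}T_{\bar u_0^{(k)}}$ across the sectors $z^rL^2_+(z^k)$. The key point there is that $\Pi$ commutes with $U_r$ applied to functions of $z^k$, including the sector shifts. If this turns out to be too delicate, I will fall back on checking the explicit form \eqref{eq:finite_gap} and the constraints \eqref{eq:constraints} directly via the Blaschke factorization.
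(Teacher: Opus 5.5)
Your proposal is correct. Apart from the finite-gap verification it coincides with the paper's proof: the covering symmetry (Lemma \ref{lem:covering}) plus the $H^s$ comparison to get $T_k=T/k^2$; an analytic $k$-th root of $\omega(k^2t)$ with the partial-fraction identity to split the single bubble into $k$ bubbles with $\beta_j=\beta(k^2t)/\sqrt{k}$; and transfer of the remainder $\sqrt{k}\,u_*(z^k)$ and its mass.

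\emph{Finite-gap verification.} The paper uses what you call your fallback (Lemma \ref{lem:covering_finite_gap}). It factors $\beta_p(z^k)=\prod_j\beta_{q_j}(z)$ over the $k$-th roots $q_j$ of $p$ and uses $\sum_j(1-\bar q_jz)^{-1}=k(1-\bar pz^k)^{-1}$. It then reads off $m_0=km$, $m_j=2$, $a=\sqrt{k}A_p$, $c_j=-2A_p/\sqrt{k}$, and checks \eqref{eq:constraints} by hand.

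Your spectral route is also sound. Set $g:=u_0^{(k)}$. For $0\le r\le k-1$ one has $\Pi(z^rF(z^k))=z^r(\Pi F)(z^k)$, hence
\[
T_{\bar g}U_r=\sqrt{k}\,U_rT_{\bar u_0},\qquad T_gU_r=\sqrt{k}\,U_rT_{u_0},\qquad DU_r=U_r(kD+r).
\]
So $L_{g}$ is unitarily equivalent to $\bigoplus_{r=0}^{k-1}(kL_{u_0}+r)$ on the orthogonal sectors $z^rL^2_+(z^k)$. The eventual unit gaps follow because the finitely many eigenvalues $k\lambda_n+r$ with $n<n_0$ are at most $k\lambda_{n_0}+k-1$, above which the spectrum is a simple arithmetic progression of step $1$.

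The two routes buy different things. Yours is more general: it shows $\Cc_k$ preserves the finite-gap class for every finite-gap datum, resonant or not. That would also settle the covered non-resonant data in Corollary \ref{coro:instability} without recomputing constraints. The paper's explicit factorization instead gives the concrete normal form of $u_0^{(k)}$, and hence $N=k(m+2)$.

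\emph{Two minor slips.}
\begin{itemize}
\item Since $T-k^2t=k^2(T_k-t)$, the coefficient should be $ck^4/k=ck^3$, not $ck^3/k$. This does not affect $\nu_j=1$.
\item $M=k$ is most directly read off from the $k$ distinct limiting poles with non-vanishing residues, as the paper does. The mass identity alone identifies $M$ only after you note that the remainder is the weak limit of $u^{(k)}(t)$ and invoke Theorem \ref{thm:universal} for $u_0^{(k)}$.
\end{itemize}
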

 
 \begin{remarks*}
 {\em 1) Recall that the range of possible $L^2$-masses of $u_0 \in H^\infty_+(\T)$ in Theorem \ref{thm:blowup} covers the whole interval $(1,2)$. Therefore, by applying the theorem above, we can construct explicit finite-time blow-up solutions for \eqref{eq:CS} in the {range}
  $$
 {\| u_0^{(k)} \|_{L^2}^2 \in (1,\infty)\setminus\{2\}}
 $$
 {by choosing $k \geq 1$ and the parameter $0 < |p| < 1$ appropriately. Indeed, the mass range for fixed $k$ is $(k,2k)$, and the union of these intervals over all $k\geq1$ equals $(1,\infty)\setminus\{2\}$.} 
 
 {2) The covering symmetry preserves the rate $\nu=1$ of the single-bubble family. Constructions with higher integer blow-up rates are not pursued here.}
 }
 \end{remarks*}
 
 \subsection{Instability of blow-up solutions}
 
 It is a natural question whether the blow-up solutions in Theorem \ref{thm:blowup_multi} are stable under small perturbations of the initial datum -- at least, within the class of finite-gap potentials. In fact, we can show that the blow-up solutions are not stable, by considering finite-gap potentials $u_0 \in H^\infty_+(\T)$ as in \eqref{eq:finite_gap_2} but assuming that the resonance condition \eqref{eq:resonance} fails. As a starting point, we have the following result.

\begin{theorem} \label{thm:gwp_finite_gap}
Let $u_0 \in H^\infty_+(\T)$ be a finite-gap potential of the form \eqref{eq:finite_gap_2} with some integer $m \geq 0$, $p \in \D \setminus \{ 0 \}$ and $a,c \in \C$ satisfying {\eqref{eq:constraints2}}. Assume further that
$$
2a + c \neq 0.
$$
Then the corresponding solution $u \in C(\R; H^\infty_+(\T))$ of \eqref{eq:CS} with $u(0)=u_0$ exists globally in time with a priori bounds
$$
\sup_{t \in \R} \| u(t) \|_{H^s} \lesssim_{u_0,s} 1 \quad \mbox{for any $s > 0$}.
$$
\end{theorem}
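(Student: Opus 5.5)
The plan is to work entirely with the finite-dimensional reduction of \eqref{eq:EF}. For $u_0$ as in \eqref{eq:finite_gap_2}, I would first identify explicitly the invariant subspace $W$. I expect $W$ to be spanned by $1, z, \ldots, z^{m}$ together with $z^m\beta_p$-type rational functions with pole at $1/\bar p$, so that $\dim W = m+3$ or so. On $W$ I would compute the matrices of $L_{u_0}$ and $S^*$. Then
$$
u(t,z)=\langle (\id - zA_t)^{-1}u_0,1\rangle, \qquad A_t=\eu^{-it\LL}S^*|_W .
$$
The solution is a rational function of $z$ whose poles are the reciprocals of the nonzero eigenvalues of $A_t$. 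As long as the spectral radius $\rho(A_t)<1$, the right-hand side defines an $H^\infty_+$ function depending continuously on $t$. By uniqueness it coincides with the smooth solution, so the solution extends as long as $\rho(A_t)<1$.

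The second step is to use the finite-gap structure. The eigenvalues of $\LL=2L_{u_0}+\id$ restricted to $W$ are finitely many real numbers $\mu_0,\ldots,\mu_{d}$. Therefore $t\mapsto \eu^{-it\LL}|_W$ lies in the compact torus
$$
\mathcal{K}=\{U_\theta:=\textstyle\sum_k \eu^{-i\theta_k}P_k : \theta\in\T^{d+1}\},
$$
where $P_k$ are the spectral projections (quasi-periodic flow). Each $U_\theta S^*|_W$ is a contraction, since $U_\theta$ is unitary and $S^*$ is a contraction. The key claim is then the following: when $2a+c\neq0$, no $U_\theta S^*|_W$ with $\theta\in\T^{d+1}$ has a unimodular eigenvalue. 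Granting this, continuity of the spectral radius on the compact set $\mathcal{K}$ gives a uniform $r<1$ with $\rho(U_\theta S^*)\le r$. Continuity of resolvents then gives a uniform bound on $\|(\id-zU_\theta S^*)^{-1}\|$ for $|z|\le 2/(1+r)$. Cauchy estimates on the slightly larger disc yield $\sup_t\|u(t)\|_{H^s}\lesssim_{s,u_0}1$ for all $s$, hence global existence.

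The third step, which I expect to be the main obstacle, is the algebraic characterization of unimodular eigenvalues. Suppose $U_\theta S^* f=\omega f$ with $|\omega|=1$ and $f\neq0$. Then $\|S^*f\|=\|f\|$, and since $\|S^*f\|^2=\|f\|^2-|\langle f,1\rangle|^2$, we get $\langle f,1\rangle=0$. Hence $f=zg$ with $g=S^*f=\omega U_\theta^{-1}f$. I would iterate this: $S^*$ acts isometrically on the whole cyclic subspace generated, which forces that subspace to lie in $z W\cap W$-type constraints. Combining with the explicit eigenvectors of $\LL$ on $W$, the condition becomes a finite polynomial system in $\eu^{i\theta_k}$, $\omega$, $p$, $a$, $c$. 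I would try first to show that this system forces $f$ to be proportional to a specific vector, namely essentially $z^{m+1}/(1-\bar p z)$ corrected by the eigenvectors, and that compatibility of the $1$-coefficient amounts exactly to $2a+c=0$. This is consistent with the resonance condition \eqref{eq:resonance} being the one producing blow-up in Theorem \ref{thm:blowup}. Here the constraint \eqref{eq:constraints2} would be used to simplify the Gram matrix of $W$. If the direct computation is too heavy, I would reduce to $m=0$ by noting that the factor $z^m$ only appends a Jordan-type chain on which $S^*$ is nilpotent, and so contributes no unimodular spectrum.
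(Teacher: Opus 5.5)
Your overall architecture is correct and closely parallels the paper's proof. Given a uniform bound $\sup_t r(A_t)<1$, you use compactness of the closure of $\{A_t\}$ (your torus $\{U_\theta S^*|_W\}$ contains it), a uniform resolvent bound on $\{|z|\le R\}$ with $R>1$, and Cauchy estimates to get the uniform $H^s$ bounds. The case $m\geq1$ reduces to $m=0$: your block-triangular reduction works, since $\mathrm{span}\{1,\dots,z^{m-1}\}$ is invariant under both $L_{u_0}$ and $S^*$, its orthogonal complement in $W$ is $z^mW^{(0)}$, and $L_{z^mv_0}(z^mg)=z^m(L_{v_0}+m)g$. The paper uses Lemma~\ref{lem:Galilean} for this step instead.

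The genuine gap is your Step~3, which is where the whole content of the theorem lies. You never prove that $U_\theta S^*|_W$ has no unimodular eigenvalue when $2a+c\neq0$. You only describe a polynomial system you would try to solve, and the vector you guess as the forced eigenvector is not the right one. In the paper this step is carried out by Propositions~\ref{prop:LB_matrix}, \ref{prop:A_t_unimodular} and \ref{prop:control_non_resonance}. These compute explicit $3\times3$ matrices in the basis $e_0=(1-\bar pz)^{-1}$, $e_1=z(1-\bar pz)^{-2}$, $e_2=\psi_{u_0}$, then the characteristic polynomial of the $2\times2$ block, and finally an ellipse condition that fails strictly when $a/c\neq-1/2$.

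A smaller point concerns Step~1: ``by uniqueness'' does not continue the solution beyond its maximal time, since the explicit formula is only known to represent $u$ on $I$. Instead, combine your uniform bound on $I$ with the blow-up alternative of Proposition~\ref{prop:lwp}(iii), or use Lemma~\ref{lem:blow_up_criterion}.

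Your idea can in fact be completed in a few lines for $m=0$. Since $W=(z\beta_p^2L^2_+)^\perp$, one has $S^*W\subset W_0=\mathrm{span}\{e_0,e_1\}$, and every $U_\theta$ (a function of $\LL|_W$) preserves $W_0$. Hence a unimodular eigenvector $f$ of $U_\theta S^*|_W$ lies in $W_0$ and satisfies $f(0)=\langle f,1\rangle=0$. This forces $f\in\C e_1$, not $z/(1-\bar pz)$. Because $U_\theta$ is unitary and commutes with $L_{u_0}|_W$, the computation in Lemma~\ref{lem:Wu} goes through verbatim. It gives $|\langle u_0,e_1\rangle|^2=\|e_1\|_{L^2}^2$, i.e.\ $|u_0'(p)|^2=(1+r)/(1-r)^3$ with $r=|p|^2$.

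Now use $u_0'(p)=\xi/(1-r)$ with $\xi=a+c/(1-r)$. The constraint \eqref{eq:constraints2} reads $\bar\xi c=2$, so $q=a/c\in\R$ and $|\xi|^2=2\bigl(q+(1-r)^{-1}\bigr)$. The identity then becomes $q=-1/2$, i.e.\ $2a+c=0$. Thus the resonance condition enters through this virial identity, not through a compatibility of the constant coefficient. With this lemma, your compactness argument over $\T^{d+1}$ proves the theorem without the paper's characteristic-polynomial computation.
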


{{By first approximating the one-pole datum in Theorem \ref{thm:blowup} by non-resonant one-pole finite-gap potentials and then applying the same covering transformation, we obtain the following instability result.}}

\begin{corollary}[Instability of blow-up solutions] \label{coro:instability}
For every $u_0^{(k)} \in H^\infty_+(\T)$ as in Theorem \ref{thm:blowup_multi}, there exists a sequence of finite-gap potentials $u_{0,n} \in H^\infty_+(\T)$  such that  $u_{0,n} \to u_0^{(k)}$ in $H^\infty_+(\T)$ as $n \to \infty$, where the corresponding solutions $u_n \in C(\R; H^\infty_+(\T))$ of \eqref{eq:CS} with $u_{n}(0) = u_{0,n}$ {exist} globally in time with a priori bounds on all $H^s$-norms.
\end{corollary}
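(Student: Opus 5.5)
The plan is to perturb the resonant one-pole datum to nearby non-resonant one-pole finite-gap data, apply Theorem \ref{thm:gwp_finite_gap} to these, and then transport the result to level $k$ with the covering transform $\Cc_k$.

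\textbf{Step 1: non-resonant approximants at level one.} Write $u_0^{(k)}(z)=k^{1/2}u_0(z^k)$, where $u_0$ is given by \eqref{def:u0_blowup} with parameters $\phi,m,p$. Thus $u_0$ is of the form \eqref{eq:finite_gap_2} with $(a,c)$ as in \eqref{eq:ac_blowup}, so $2a+c=0$. Keep $m$ and $p$ fixed and perturb $(a,c)$ inside the real hypersurface defined by \eqref{eq:constraints2}.

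A concrete choice is $c_n=c$ and $a_n=a+i\theta_n c/|c|^2$ with real $\theta_n\to0$, $\theta_n\neq0$. Then $\bar a_n c=\bar a c - i\theta_n$. This breaks the reality of the constraint, so instead I take
$$
a_n=a+\delta_n\,\frac{c}{|c|^2}\,i\,\mathrm{Im}\text{-free correction}.
$$
More cleanly, observe that \eqref{eq:constraints2} reads $\bar a c+|c|^2/(1-|p|^2)=2$. Given $c_n$ near $c$ with $c_n\neq0$, set
$$
a_n:=\overline{\Bigl(2-\tfrac{|c_n|^2}{1-|p|^2}\Bigr)/c_n}.
$$
Then $(a_n,c_n)$ satisfies \eqref{eq:constraints2} exactly, and $a_n\to a$ as $c_n\to c$. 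Resonance $2a_n+c_n=0$ combined with the constraint forces $c_n$ into the discrete-modulus family \eqref{eq:ac_blowup}, namely $|c_n|=2\sqrt{(1-|p|^2)/(1+|p|^2)}$. So it suffices to choose $c_n=(1+1/n)c$, which has a different modulus and is therefore non-resonant. Also $a_n\neq0$ for large $n$, since $a\neq0$.

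Let $u_{0,n}^{(1)}$ be the corresponding datum in \eqref{eq:finite_gap_2}. This is a rational function whose only pole, $1/\bar p$, is fixed and lies outside $\ov{\D}$. Its coefficients converge, so $u_{0,n}^{(1)}\to u_0$ uniformly on a neighbourhood of $\ov{\D}$, hence with all derivatives on $\T$, i.e.\ in $H^\infty_+(\T)$. By Theorem \ref{thm:gwp_finite_gap}, each solution $v_n$ with $v_n(0)=u_{0,n}^{(1)}$ is global with $\sup_t\|v_n(t)\|_{H^s}<\infty$ for every $s$.

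\textbf{Step 2: covering.} Set $u_{0,n}(z):=k^{1/2}u_{0,n}^{(1)}(z^k)$ and $u_n:=\Cc_k v_n$. By the covering symmetry (Appendix \ref{sec:app}), $u_n$ solves \eqref{eq:CS} globally with $u_n(0)=u_{0,n}$, and by uniqueness of smooth solutions it is the solution. The datum $u_{0,n}$ is a finite-gap potential, by the same reasoning used in Theorem \ref{thm:blowup_multi}.

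Composition with $z\mapsto z^k$ maps the Fourier mode $n$ to mode $kn$. Hence
$$
\|f(\cdot^k)\|_{H^s}\le k^s\|f\|_{H^s},
$$
which gives both $u_{0,n}\to u_0^{(k)}$ in $H^\infty_+(\T)$ and
$$
\sup_t\|u_n(t)\|_{H^s}\le k^{s+1/2}\sup_t\|v_n(t)\|_{H^s}<\infty.
$$

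\textbf{Main obstacle.} The only delicate point is ensuring that the approximants stay in the admissible class \eqref{eq:finite_gap_2}, meaning the constraint holds exactly and $a_n\neq0$ when $m\neq0$, while being genuinely non-resonant. The explicit parametrization above, which solves for $a_n$ in terms of $c_n$, settles both at once. Everything else is a direct combination of Theorem \ref{thm:gwp_finite_gap} with the covering symmetry.
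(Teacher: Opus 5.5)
Your proof is correct and follows the paper's route. You approximate the resonant one-pole datum by non-resonant one-pole data on the constraint surface, apply Theorem \ref{thm:gwp_finite_gap}, and transport everything with $\Cc_k$ and the bound $k^{s+1/2}$; the only difference is the parametrization (you scale $c$ and solve for $a$, whereas the paper fixes $\arg c$ and lets $q_n=a_n/c_n\to-\frac12$). Two small points: Lemma \ref{lem:covering_finite_gap} is stated only for resonant data, so the finite-gap property of $u_{0,n}$ needs the short recomputation $\overline{\sqrt{k}a_n}\,\frac{c_n}{\sqrt{k}}+\sum_{\ell}\frac{|c_n|^2/k}{1-\bar q_j q_\ell}=\bar a_nc_n+\frac{|c_n|^2}{1-|p|^2}=2$, which the paper writes out, and the abandoned first attempt in Step 1 should be deleted.
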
 

{
\begin{corollary}[Instability of plane waves] \label{coro:plane_wave_instability}
For every pair of integers $\nu,k\geq1$ and every $\theta\in\T$, the plane wave
\[
u_{\rm pw}(t,z)
=
\eu^{i\theta}\sqrt{k}\,
\eu^{-i(\nu k)^2t}z^{\nu k}
\]
can be approximated in $H^\infty_+(\T)$ at $t=0$ by finite-gap initial data whose corresponding solutions exhibit finite-time $k$-bubble blow-up.
\end{corollary}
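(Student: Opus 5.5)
The idea is to take the single-bubble blow-up data \eqref{def:u0_blowup} in the degenerate limit $p \to 0$, where they collapse onto a monomial, and then transport this to $k$ bubbles with the covering symmetry \eqref{eq:cov}. Only the initial data need to be approximated, so we will not use the evolution of the plane wave beyond the fact that $u_{\rm pw}(0,z) = \eu^{i\theta}\sqrt{k}\, z^{\nu k}$.

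\emph{Step 1: the limit $p\to0$.} Fix integers $\nu,k\geq1$ and $\theta\in\T$. In \eqref{def:u0_blowup} choose $m=\nu-1\geq 0$ and $\phi\in\T$ with $-\eu^{i\phi}=\eu^{i\theta}$, and denote the resulting datum by $u_{0,p}$. Formally, at $p=0$ we have
$$
\sqrt{\tfrac{1-|p|^2}{1+|p|^2}}\to 1,\qquad \beta_p(z)\to z,\qquad 1-\tfrac{2}{1-\bar p z}\to -1 .
$$
Hence the expected limit is $u_{0,p}\to -\eu^{i\phi} z^{m+1} = \eu^{i\theta} z^{\nu}$.

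\emph{Step 2: convergence in $H^\infty_+(\T)$.} We need this limit in every $H^s$ norm, not just pointwise. Expand in geometric series for $|p|<1/2$:
$$
\beta_p(z) = (z-p)\sum_{n\geq0}(\bar p z)^n, \qquad \frac{1}{1-\bar p z} = \sum_{n\ge0} (\bar p z)^n .
$$
Multiplying out, the Taylor coefficient of $z^n$ in $u_{0,p}(z) - \eu^{i\theta} z^\nu$ is bounded by $C (n+1)^2 |p|^{\max(1,\, n-\nu-1)}$. The sqrt prefactor only contributes an error $\O(|p|^2)$ times the coefficients of a fixed rational function. Summing $(1+n)^{2s}$ times the square of this bound gives $\|u_{0,p}-\eu^{i\theta}z^\nu\|_{H^s}\lesssim_s |p|\to0$ for every $s$. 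Equivalently, $u_{0,p}$ is analytic in the disk $|z|<1/|p|$, which grows, and depends continuously on $p$ there uniformly on $|z|\le 2$; Cauchy estimates then give the geometric coefficient decay directly.

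\emph{Step 3: covering transform and blow-up.} For $f\in H^\infty_+$, the map $f\mapsto k^{1/2}f(z^k)$ sends the coefficient of $z^n$ to position $kn$. Therefore
$$
\|k^{1/2}f(z^k)\|_{H^s}\lesssim_{k,s}\|f\|_{H^s},
$$
so it is continuous on $H^\infty_+$. Applying it to Step 2, the data $u^{(k)}_{0,p}(z)=k^{1/2}u_{0,p}(z^k)$ converge in $H^\infty_+(\T)$ to $\eu^{i\theta}\sqrt k\, z^{\nu k} = u_{\rm pw}(0)$ as $p\to0$. By Theorem \ref{thm:blowup_multi}, each $u^{(k)}_{0,p}$ with $p\neq0$ is a finite-gap potential. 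Its solution blows up at the finite time $T_k=\frac{\pi(1-|p|^2)}{4k^2|p|}$, with exactly $M=k$ bubbles of rate $\nu_j=1$. Taking any sequence $p_n\to0$ in $\D\setminus\{0\}$ gives the approximating family claimed in Corollary \ref{coro:plane_wave_instability}.

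The only step needing care is the uniform-in-$n$ coefficient bound in Step 2, and the analyticity/Cauchy-estimate argument handles it. It is also worth noting two features of the limit. First, $T_k\to\infty$ as $p\to 0$, so the blow-up time escapes to infinity as the data approach the plane wave. Second, the masses $k\|u_{0,p}\|_{L^2}^2 = k\frac{1+3|p|^2}{1+|p|^2}$ decrease to $k$, so the blow-up data have mass slightly above that of the plane wave; this matches $\|u_{\rm pw}(0)\|_{L^2}^2 = k$ and is consistent with $M = k < \|u^{(k)}_{0,p}\|_{L^2}^2$ from Theorem \ref{thm:universal}.
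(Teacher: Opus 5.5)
Your proposal is correct and follows the paper's proof. Like the paper, you take $m=\nu-1$ and $\phi=\theta+\pi$, let $p\to0$ in the single-bubble family, and then pass both the convergence to $\eu^{i\theta}\sqrt{k}\,z^{\nu k}$ and the $k$-bubble blow-up through the covering map and Theorem~\ref{thm:blowup_multi}; your Step~2 (coefficient decay via Cauchy estimates, uniformly in small $|p|$) simply supplies the justification of the $H^\infty_+(\T)$ convergence that the paper asserts without proof.
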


\begin{proof}
Choose $p_n\to0$ in Theorem \ref{thm:blowup}, take $m=\nu-1$, and choose the phase $\phi=\theta+\pi$. The resulting initial data converge in $H^\infty_+(\T)$ to $\eu^{i\theta}z^\nu$. Applying $\Cc_k$ gives finite-gap initial data converging to $\eu^{i\theta}\sqrt{k}z^{\nu k}$, while Theorem \ref{thm:blowup_multi} shows that every member of the approximating sequence produces finite-time $k$-bubble blow-up.
\end{proof}

\begin{corollary}[Non-chiral blow-up solutions]
Let $u$ be a single-bubble blow-up solution from Theorem \ref{thm:blowup} with $m=0$. For every integer $\ell\geq1$, the Galilean transform $\Gc_{-\ell}u$ is a non-chiral smooth solution of \eqref{eq:CS} that blows up at the same time and with the same Sobolev blow-up rate as $u$.
\end{corollary}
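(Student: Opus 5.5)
The plan is to use the Galilean symmetry of \eqref{eq:CS} and transfer everything from Theorem \ref{thm:blowup}. For an integer $\alpha$, I take the Galilean transform (normalized as in Appendix \ref{sec:app}) to be
$$
\Gc_\alpha u(t,x) = \eu^{i\alpha x - i\alpha^2 t}\, u(t, x-2\alpha t).
$$
The integrality of $\alpha$ is what makes $\eu^{i\alpha x}$ well defined on $\T$.

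\textbf{Step 1: $\Gc_\alpha$ maps smooth solutions to smooth solutions.} Set $v=\Gc_\alpha u$ for a smooth solution $u$ of \eqref{eq:CS}, viewed as an equation for general (not necessarily Hardy) functions on $\T$.
\begin{enumerate}
\item For the nonlinear term, $|v|^2(t,x)=|u|^2(t,x-2\alpha t)$ because the phase cancels. Since $\Pi$ and $D$ commute with translations, $D\Pi(|v|^2)\,v$ equals $\eu^{i\alpha x-i\alpha^2t}$ times $[D\Pi(|u|^2)u](t,x-2\alpha t)$.
\item For the linear part, the standard computation gives that $i\pt_t v+\pt_x^2 v$ is the same phase times $[i\pt_t u+\pt_x^2 u](t,x-2\alpha t)$. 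The cross terms $2\alpha\, \pt_x u$ from $i\pt_t$ and from $\pt_x^2$ cancel, and the $\alpha^2$ terms cancel as well.
\end{enumerate}
Hence $v$ solves \eqref{eq:CS} on the same time interval $[0,T)$, and it is smooth there.

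\textbf{Step 2: non-chirality.} Take $m=0$ in \eqref{def:u0_blowup}. Then
$$
u_0(0)=\eu^{i\phi}\sqrt{\tfrac{1-|p|^2}{1+|p|^2}}\,(-p)(1-2)\neq 0,
$$
since $p\neq0$, so $\widehat{u_0}(0)\neq0$. With $\alpha=-\ell$ and $\ell\ge1$, we get $\widehat{v(0)}(-\ell)=\widehat{u_0}(0)\neq 0$. So $v(0)\notin L^2_+(\T)$, i.e.\ $v$ is non-chiral. The hypothesis $m=0$ is exactly what makes this work for every $\ell\ge1$. If $m\geq1$, then $\widehat{u_0}(n)=0$ for $n<m$, and $\Gc_{-\ell}$ with $\ell\le m$ would keep the datum inside the Hardy space.

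\textbf{Step 3: same blow-up time and rate.} The Fourier coefficients satisfy $|\widehat{v(t)}(n)|=|\widehat{u(t)}(n+\ell)|$, because translation only changes phases. Moreover $\langle n-\ell\rangle\sim_\ell\langle n\rangle$ uniformly in $n$. Therefore
$$
\|v(t)\|_{H^s}\sim_{s,\ell}\|u(t)\|_{H^s}\quad\text{for all } t \text{ and all } s\ge0.
$$
It follows that $v$ remains smooth exactly as long as $u$ does, so it blows up at the same $T=\pi(1-|p|^2)/(4|p|)$. By Theorem \ref{thm:blowup}, $\|v(t)\|_{H^s}\sim(T-t)^{-2s}$ as $t\nearrow T$.

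\textbf{Main obstacle.} The only delicate point is Step 1: one must check that the nonlocal operator $D\Pi$ is compatible with the phase modulation. This holds because the nonlinearity depends on $|u|^2$, where the phase drops out, and $D\Pi$ commutes with translations. No property of the Hardy space is used, so the argument goes through for non-chiral functions.
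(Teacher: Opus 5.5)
Your proposal is correct and takes the same route as the paper. The paper cites Lemma~\ref{lem:Galilean} for the facts that $\Gc_{-\ell}u$ solves \eqref{eq:CS} and has Sobolev norms equivalent to those of $u$; you re-derive both in Steps~1 and~3. Then, exactly as you do, it uses $\widehat{u_0}(0)=\eu^{i\phi}\sqrt{(1-|p|^2)/(1+|p|^2)}\,p\neq0$ for $m=0$ to obtain a nonzero Fourier coefficient of the transformed datum at frequency $-\ell$.
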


\begin{proof}
By Lemma \ref{lem:Galilean}, $\Gc_{-\ell}u$ solves \eqref{eq:CS}, and its Sobolev norms are equivalent to those of $u$. Moreover,
\[
\widehat{u_0}(0)
=\eu^{i\phi}\sqrt{\frac{1-|p|^2}{1+|p|^2}}\,p\neq0,
\]
so the initial datum of $\Gc_{-\ell}u$ has a nonzero Fourier coefficient at frequency $-\ell$. Hence it is not chiral.
\end{proof}
}

\subsection{Strategy of the proofs and main novelties}

The starting point of our analysis is the explicit formula \eqref{eq:EF}. 
For a finite-gap potential $u_0 \in H^\infty_+(\T)$, the associated finite Blaschke product
$\psi_{u_0}$ determines a finite-dimensional space
\[
W=(z\psi_{u_0}L^2_+(\T))^\perp
\]
which is invariant under both $L_{u_0}$ and $S^*$ and contains $u_0$ and
$1$. Consequently, the explicit formula reduces to
\[
u(t,z)
=
\left\langle
(\id-zA_t)^{-1}u_0,1
\right\rangle,
\qquad
A_t=\eu^{-it\LL}S^*|_W ,
\]
where $\LL=2L_{u_0}+\id$ and $(A_t)_{t\in\R}$ is an analytic family of
finite-dimensional contractions. Combined with the stability principle in Section \ref{sec:lwp},
this gives a purely spectral characterization of blow-up: finite-time
blow-up occurs precisely when an eigenvalue of $A_t$ reaches the unit
circle. Thus the nonlinear singularity formation problem is reduced to
the spectral dynamics of a finite-dimensional, generally non-normal,
family of contractions.

The main step in the universal blow-up analysis is to understand the
unitary part of $A_T$ at a blow-up time $T$. A virial-type identity shows
that every normalized eigenvector $f$ associated with a unimodular
eigenvalue satisfies
\[
|\langle u_0,f\rangle|=1.
\]
This identity has several consequences: the unimodular eigenvalues are
simple and mutually distinct, and their number is controlled by the
$L^2$-mass. Simplicity allows us to continue these eigenvalues
analytically as branches $\omega_j(t)$ for $t$ close to $T$. Decomposing
the resolvent according to the corresponding Riesz projections then
gives
\[
u(t,z)
=
\sum_{j=1}^M
\frac{\beta_j(t)}{1-\omega_j(t)z}
+
w(t,z),
\]
where $w(t,\cdot)$ remains holomorphic in a fixed neighborhood of
$\overline{\D}$. The contraction structure moreover yields
\[
|\beta_j(t)|^2
\sim
1-|\omega_j(t)|^2.
\]
Since $1-|\omega_j(t)|^2$ is a non-negative real-analytic function
vanishing at $t=T$, its first nonzero term has even order:
\[
1-|\omega_j(t)|^2
=
c_j(T-t)^{2\nu_j}
+O((T-t)^{2\nu_j+1}),
\qquad
\nu_j\in\N.
\]
This is the mechanism behind both the quantization of the Sobolev
blow-up rates and the quantization of the concentrating $L^2$-mass.
The simplicity and separation of the limiting unimodular eigenvalues
also exclude bubble towers.

For the existence of blow-up solutions, we specialize to the one-pole finite-gap
family \eqref{eq:finite_gap_2}. After exploiting the symmetries of the
equation, the relevant spectral calculation reduces to an explicit
low-dimensional matrix problem. We show that the family $A_t$ reaches
the unit circle if and only if the algebraic resonance condition
\[
2a+c=0
\]
holds. In the resonant case the first contact with the unit circle occurs
at the explicit time
\[
T=\frac{\pi(1-|p|^2)}{4|p|},
\]
and a second-order expansion of the corresponding eigenvalue gives
$\nu=1$. In the non-resonant case, by contrast, the spectrum of $A_t$
stays uniformly inside the unit disk for all $t$, which yields global
existence together with uniform bounds in every Sobolev space. The
covering symmetry then turns the single-bubble solutions into
multi-bubble solutions, while the Galilean symmetry produces further
examples outside the chiral Hardy-space setting.

The resulting picture is therefore governed by a sharp
resonance/non-resonance dichotomy. In contrast with perturbative
constructions of blow-up for the corresponding real-line problem, the
blow-up mechanism here is read directly from the explicit formula:
singularity formation is encoded by eigenvalues of a finite-dimensional
contraction reaching the unit circle. Besides giving, to our knowledge,
the first finite-time blow-up solutions for the focusing
Calogero--Sutherland derivative NLS on $\T$, this approach yields a
classification of all finite-time blow-up dynamics within the
finite-gap class, including quantization of the blow-up mass and rate,
as well as explicit multi-bubble solutions and their instability.

In our companion work \cite{ChLe-26}, we adapt the strategy of using the explicit formula to study finite-time blow-up for \eqref{eq:CS} on $\R$.

\subsection*{Acknowledgements}

The authors are grateful to Rana Badreddine for valuable discussions and exchanges at the Mathematisches Forschungsinstitut Oberwolfach in March 2026. They also thank Patrick G\'erard for
helpful suggestions related to this work and for the kind hospitality of the {Institut} Henri Poincar\'e in June and July 2026. Financial support by the Swiss National Science Foundation (SNSF) under Grant No.~204121 is gratefully acknowledged. The authors also acknowledge the use of AI tools  in the initial phase of this project. All mathematical arguments and proofs in the final manuscript were written and checked by the authors.

\section{Local well-posedness, explicit formula, and stability principle}

\label{sec:lwp}

 From \cite{Ba-24}, we recall that \eqref{eq:CS} is locally well-posed in $H^s_+(\T)$ for any $s > \frac 3 2$. Furthermore, by persistence of regularity, we obtain the following result for smooth initial data in $H^\infty_+(\T)$. Since we ultimately deal with finite-gap potentials, we restrict our discussion to smooth solutions henceforth. Also, we recall the explicit formula for smooth solutions of \eqref{eq:CS} derived in \cite{Ba-24}. We record these facts as follows.

\begin{proposition}[LWP in $H^\infty_+(\T)$ and explicit formula] \label{prop:lwp}
For any $u_0 \in H^\infty_+(\T)$, there exists a unique solution $u \in C(I; H^{\infty}_+(\T))$ of \eqref{eq:CS} with $u(0)=u_0$, where $I \subset \R$ with $0 \in I$ denotes its maximal time interval of existence. The following properties hold.
\begin{enumerate}
\item[(i)] \textbf{Infinite set of conserved quantities}: For $t \in I$, the quantities
$$
I_k[u(t)] = \langle L_{u(t)}^k u(t), u(t) \rangle \quad \mbox{for} \quad k=0,1,2, \ldots
$$ 
{are conserved, where the Lax operator is $L_u = D - T_u T_{\bar{u}}$.}
\item[(ii)] \textbf{Explicit formula:} We have
$$
u(t,z) = \left \langle (\id -z \eu^{-it \LL} S^*)^{-1} u_0,1 \right \rangle \quad \mbox{for $(t,z) \in I \times \D$} 
$$
{where} $\LL = 2 L_{u_0} + \id$ and $S^*$ is the backward shift on $L^2_+(\T)$.
{
\item[(iii)] \textbf{Blow-up alternative:} If $T_+:=\sup I<+\infty$, then
\[
\lim_{t\nearrow T_+}\|u(t)\|_{H^{1/2}}=+\infty.
\]
Likewise, if $T_-:=\inf I>-\infty$, then
\[
\lim_{t\searrow T_-}\|u(t)\|_{H^{1/2}}=+\infty.
\]
}
\end{enumerate}
\end{proposition}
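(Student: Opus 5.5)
This proposition collects known facts, so the plan is to assemble them from \cite{Ba-24} plus standard arguments rather than prove anything new.

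\textbf{Local theory.} Local well-posedness in $H^s_+(\T)$ for every $s>3/2$ is taken from \cite{Ba-24}. To pass to $H^\infty_+(\T)$, I will use persistence of regularity. This needs an energy estimate for $\pt_x^k u$ in which the top-order terms cancel. The point is that $-2D\Pi(|u|^2)u$ produces the term $-2\Pi(|u|^2)\pt_x u$, plus lower-order pieces. The commutator structure of the Szeg\H{o} projector then gives a bound of the form
\[
\tfrac{d}{dt}\|u\|_{H^k}^2 \lesssim C(\|u\|_{H^{s_0}})\|u\|_{H^k}^2
\]
for some fixed $s_0>3/2$. By Gronwall, the $H^k$ lifespan coincides with the $H^{s_0}$ lifespan. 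Uniqueness holds already at the level $s_0$.

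\textbf{Conservation laws (i).} For smooth solutions, \cite{Ba-24} provides a Lax pair: $\pt_t L_u=[B_u,L_u]$ with $B_u$ skew-adjoint, and in addition $\pt_t u=B_u u - iL_u^2 u$ (with the appropriate normalization). Then
\[
\tfrac{d}{dt}\langle L_u^k u,u\rangle
=\langle [B_u,L_u^k]u,u\rangle+\langle L_u^k(B_u u-iL_u^2u),u\rangle+\langle L_u^k u,B_u u-iL_u^2u\rangle .
\]
Skew-adjointness of $B_u$ together with self-adjointness of $L_u$ makes this vanish. The only issue is domains, and these are harmless since $u(t)\in H^\infty_+(\T)$.

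\textbf{Explicit formula (ii).} This is quoted from \cite{Ba-24}. The mechanism is as follows. Let $U(t)$ be the unitary operator with $\pt_t U=B_uU$ and $U(0)=\id$. It conjugates the Lax operators, $L_{u(t)}=U(t)L_{u_0}U(t)^*$. One then combines this with the identity $[S^*,L_u]$ together with the known action of $U(t)^*$ on $1$ and on $S^*$. Finally, $u(t,z)=\langle(\id-zS^*)^{-1}u(t),1\rangle$ is rewritten as $\langle(\id-z\eu^{-it}\eu^{-2itL_{u_0}}S^*)^{-1}u_0,1\rangle$. Note that $\eu^{-it}\eu^{-2itL_{u_0}}=\eu^{-it\LL}$ with $\LL=2L_{u_0}+\id$.

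\textbf{Blow-up alternative (iii).} This is the step I expect to need the most care, because $H^{1/2}$ lies below the local well-posedness threshold $3/2$. The plan is to use the conserved quantities to bootstrap from $H^{1/2}$ to higher norms.

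First I will establish the equivalence, for each $k$, of $I_k$ with $\|u\|_{\dot H^{k/2}}^2$ up to lower-order terms. This holds provided $\|u\|_{H^{1/2}}$ stays bounded: the Toeplitz term $T_uT_{\bar u}$ is controlled by $\|u\|_{L^\infty}^2$ and by Sobolev products, and these are bounded in terms of lower norms by interpolation together with the Brezis--Gallouet inequality at the $H^{1/2}$ endpoint. Alternatively, I would first obtain an $H^1$ bound directly from the energy $I_1$ plus a Gagliardo--Nirenberg inequality using the $H^{1/2}$ bound.

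Consequently, if $\liminf_{t\nearrow T_+}\|u(t)\|_{H^{1/2}}<\infty$, then along a sequence $t_n\nearrow T_+$ all $H^s$ norms remain bounded. Since the local existence time depends only on $\|u\|_{H^{s_0}}$, the solution would extend past $T_+$, a contradiction. This gives the full limit, not merely the $\limsup$. The case of $T_-$ is identical by time reversal.
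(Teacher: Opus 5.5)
Your proposal is correct and follows essentially the same route as the paper. The local theory, the conservation laws and the explicit formula are quoted from Badreddine's work, with persistence of regularity giving the $H^\infty_+$ statement. The blow-up alternative is then proved as in the paper: a bounded $H^{1/2}$ norm along a sequence $t_n\nearrow T_+$ is fed into the coercive expansions of the conserved $I_k$ to bound every $H^{k/2}$ norm, and the solution is continued past $T_+$ using the norm-dependent local lifespan.

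Two minor slips do not affect the argument. First, it is $I_2$, not $I_1$, that sits at the $\dot H^1$ level. Second, the nonlinearity is $(D\Pi(|u|^2))u$, so the top-order terms in an $H^k$ energy estimate are $(D\Pi(\bar u\,\pt_x^k u))u$ and $(D\Pi(u\,\pt_x^k\bar u))u$, not a transport term $\Pi(|u|^2)\pt_x u$. Your stated inequality $\frac{d}{dt}\|u\|_{H^k}^2\lesssim C(\|u\|_{H^{s_0}})\|u\|_{H^k}^2$ nevertheless remains true.
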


{
For completeness, let us recall the blow-up alternative in item (iii). We only discuss the forward-time statement. If it failed, there would be a sequence $t_n\nearrow T_+$ for which $\|u(t_n)\|_{H^{1/2}}$ remains bounded. The inductive coercive estimates used in the proof of \cite[Corollary~2.1]{GeLe-24} depend only on the expansion of the conserved quantities $I_k[u]$ and on Sobolev estimates that hold equally well on $\T$. They therefore imply successively that $\|u(t_n)\|_{H^{k/2}}$ remains bounded for every integer $k\geq1$. In particular, the sequence is uniformly bounded in $H^2$. Since the lifespan in the $H^2$ local well-posedness theory depends only on the size of the $H^2$ norm, the solution can then be continued beyond $T_+$, contradicting maximality. The backward-time statement follows in the same way.
}

In the explicit formula above, we note that $\eu^{-it \LL} S^* : L^2_+(\T) \to L^2_+(\T)$ is a contraction\footnote{Recall that a bounded linear operator $T: H \to H$ on a Hilbert space $H$ is a {\em contraction} if $\| T f \| \leq \| f \|$ for all $f \in H$, i.e., its operator norm satisfies $\| T \| \leq 1$.}
 for any $t \in \R$. Thus $\sigma(\eu^{-it \LL} S^*) \subset \ov{\D}$ and hence the bounded inverse $(\id-z \eu^{-it \LL}S^*)^{-1}$ exists for any $z \in \D$ and it is easy to see that {the} right-hand side in the explicit formula defines an element in $L^2_+(\T)$ for all $t\in \R$. But in order to {determine whether} $I \neq \R$, we {need to study} the strong continuity in $L^2_+(\T)$ with respect to $t$. Here, by using the general {\em stability principle} for explicit formulae derived in \cite{GeLe-26}, we obtain the following result, which characterizes the maximal time interval $I$ by the strong stability of the discrete semigroup of contractions $\{ (\eu^{-it \LL} S^*)^n \}_{n \in \N}$ on the initial datum $u_0$.

\begin{proposition}[Stability principle for \eqref{eq:CS}]  \label{prop:stability_principle}
Suppose $u_0 \in H^\infty_+(\T)$ and let $u \in C(I; H^\infty_+(\T))$ be the corresponding maximal solution  of \eqref{eq:CS} with $u(0)=u_0$. Then we have
$$
\sup I = \inf \left \{ t_* > 0 : \mbox{$(\eu^{-it_* \LL} S^*)^n u_0 \not \to 0$ in $L^2_+(\T)$ as $n \to \infty$} \right \} ,
$$
$$
\inf I = \sup \left \{ t_* < 0 : \mbox{$(\eu^{-it_* \LL} S^*)^n u_0 \not \to 0$ in $L^2_+(\T)$ as $n \to \infty$} \right \},
$$
with the usual convention that $\inf \emptyset = +\infty$ and $\sup \emptyset = -\infty$. 

Moreover, if $\sup \, I < +\infty$ then {the infimum on the right-hand side is a minimum}. Likewise, if $\inf \, I > -\infty$ then {the supremum on the right-hand side is a maximum}.

In particular, it holds that $I = \R$ if and only if we have the strong stability property
$$
\mbox{$(\eu^{-it \LL} S^*)^n u_0 \to 0$ in $L^2_+(\T)$ as $n \to \infty$ for all $t \in \R$}.
$$
\end{proposition}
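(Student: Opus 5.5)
The plan is to deduce Proposition \ref{prop:stability_principle} from the general stability principle of \cite{GeLe-26}, specialized to \eqref{eq:CS} through the explicit formula in Proposition \ref{prop:lwp}(ii). For each $t\in\R$, write $A_t = \eu^{-it\LL}S^*$ and define
$$
U(t,z) := \langle (\id - zA_t)^{-1}u_0, 1\rangle = \sum_{n\ge 0} z^n \langle A_t^n u_0, 1\rangle .
$$
The Taylor coefficients of $U(t)$ are $\widehat{U(t)}(n) = \langle A_t^n u_0, 1\rangle$. A direct computation using $\langle S^{*}f,g\rangle$ and the unitarity of $\eu^{-it\LL}$ identifies the tail norm
$$
\sum_{k\ge n} |\widehat{U(t)}(k)|^2 = \|A_t^n u_0\|^2 .
$$
This identity is the Hardy-space analogue of the relation $\|S^{*n}f\|^2=\sum_{k\ge n}|f_k|^2$. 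To establish it, I would use the Lax-pair intertwining: formally $A_t^n u_0$ corresponds to $\eu^{-it\LL}$ conjugated against $S^{*n}u(t)$, via the unitary flow $U(t)$ from the Lax pair, with $S^*u = A u$. More precisely, on $I$ the Lax evolution gives a unitary $\mathcal U(t)$ with $\mathcal U(t)^* A_t^n u_0 = S^{*n} u(t)$ (up to phase). On $I$ the tail is therefore just the $H^0$-tail of $u(t)$, and in general we still have $\|U(t)\|_{L^2} \le \|u_0\|$, with equality iff $A_t^n u_0 \to 0$.

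\textbf{Step 1 (inside $I$):} For $t\in I$, $U(t)=u(t)\in L^2$ with mass $\|u_0\|^2$ by conservation ($I_0$). Since $\|u_0\|^2 - \|U(t)\|^2 = \lim_n \|A_t^n u_0\|^2$ (from the tail identity at $n=0$, and the Wold-type decomposition of the contraction $A_t$), we get strong stability on $I$. Hence the infimum in the statement is $\ge \sup I$.

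\textbf{Step 2 (the converse):} This is the main obstacle. Suppose $A_t^n u_0\to 0$ for all $t\in[0,t_*]$. I need to show that $U$ is continuous in $L^2$ on $[0,t_*]$, and then propagate regularity.

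For continuity, $t\mapsto U(t)$ is weakly continuous, since the coefficients are continuous in $t$. The mass $\|U(t)\|^2=\|u_0\|^2$ is constant on this interval, so weak continuity upgrades to strong continuity. Equicontinuity of the tails then follows from Dini's theorem, since $\|A_t^n u_0\|$ is nonincreasing in $n$, continuous in $t$, and tends to $0$ pointwise.

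This yields no $L^2$-concentration. To conclude that $u$ extends beyond $\sup I$ if $\sup I<t_*$, I would invoke the argument of \cite{GeLe-26}. Smooth solutions coincide with $U$ on $I$ by Step 1. If $\|u(t)\|_{H^{1/2}}\to\infty$ as $t\nearrow T_+$ (Proposition \ref{prop:lwp}(iii)), then $L^2$-compactness of $\{U(t)\}_{t\le T_+}$ together with the conservation of $I_1$ forces a contradiction. Indeed, the equicontinuity in $L^2$ gives a uniform small-mass-at-high-frequency bound, which, combined with the coercive expansion of $I_1,I_2,\dots$, bounds the $H^{k/2}$ norms. This is the step where I expect the genuine work to lie, and I would follow \cite{GeLe-26} essentially verbatim.

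\textbf{Step 3 (attainment):} If $\sup I<\infty$, the set of $t_*$ with non-stability is closed from the right near $\sup I$. This follows from Step 1 and the blow-up alternative: at $t=\sup I$, stability would give $L^2$-continuity up to and past $T_+$ by Step 2 applied on a slightly larger interval, using openness of stability, which holds by lower semicontinuity arguments. The negative-time case follows by time reversal $u(t,x)\mapsto \bar u(-t,-x)$ within the Hardy setting.
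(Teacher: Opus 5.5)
Your overall strategy is the paper's. Mass conservation together with the stability identity $\|\Us(t)u_0\|_{L^2}^2=\|u_0\|_{L^2}^2-\lim_n\|(\eu^{-it\LL}S^*)^nu_0\|_{L^2}^2$ shows that every $t\in I$ is a stable time. Conversely, $L^2$-tightness of $u(t)$ near $T_+$ plus conservation of $I_1$ gives an $H^{1/2}$ bound contradicting Proposition~\ref{prop:lwp}(iii). Getting the tightness from Dini's theorem, rather than from weak-plus-norm convergence and relative compactness as the paper does, is a harmless variant. Your tail identity is misstated, though. Telescoping $\|\eu^{-it\LL}S^*g\|_{L^2}^2=\|g\|_{L^2}^2-|\langle g,1\rangle|^2$ with your $A_t=\eu^{-it\LL}S^*$ gives
\[
\sum_{k\geq n}|\widehat{U(t)}(k)|^2=\|A_t^nu_0\|_{L^2}^2-\lim_{N\to\infty}\|A_t^Nu_0\|_{L^2}^2 .
\]
So your version holds only at stable times, which is all Step~2 uses. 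As written, at $n=0$ it would force $\|U(t)\|_{L^2}=\|u_0\|_{L^2}$ for every $t$. No Lax-pair intertwining or Wold decomposition is needed.

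The genuine problem is Step~3. To show the infimum is attained, you invoke openness of the set of stable times ``by lower semicontinuity'', and this is unjustified. The map $t\mapsto\lim_n\|A_t^nu_0\|_{L^2}^2=\inf_n\|A_t^nu_0\|_{L^2}^2$ is an infimum of continuous functions, hence only upper semicontinuous, and no semicontinuity property makes the zero set of a nonnegative function open. The detour is also unnecessary. Suppose $T_+=\sup I<\infty$ were a stable time. Then by Step~1 all of $[0,T_+]$ consists of stable times. Your Step~2 applied with $t_*=T_+$ (Dini on the compact interval $[0,T_+]$) already gives uniform tightness of $\{u(t):0\leq t<T_+\}$, hence the $H^{1/2}$ bound and the contradiction; the strict inequality $\sup I<t_*$ is never needed. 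This shows directly that $T_+$ is unstable, which yields the formula for $\sup I$ and its attainment at once, exactly as in the paper. Finally, the analytic core you defer to \cite{GeLe-26} is short and should be written out. With $\|u_{>N}(t)\|_{L^2}^2\leq\eta$ and $\|u_{\leq N}(t)\|_{L^\infty}^2\lesssim N\|u_0\|_{L^2}^2$, the estimate $\|T_{\bar g}h\|_{L^2}^2\leq(\langle Dh,h\rangle+\|h\|_{L^2}^2)\|g\|_{L^2}^2$ gives $I_1[u_0]\geq\frac12\langle Du(t),u(t)\rangle-C_{N,u_0}$. Only this $H^{1/2}$ bound is needed, not all $H^{k/2}$ norms.
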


\begin{remark*}
{\em Since $|I| > 0$ by local well-posedness, we conclude that strong stability $(\eu^{-it \LL} S^*)^n u_0 \to 0$ in $L^2_+(\T)$ must hold {for all sufficiently small $|t|$}.}
\end{remark*}

\begin{proof}
We organize the proof into the following steps.

\medskip
{\bf Step 1.} For $t \in \R$, we set $\Sigma_t^* := \eu^{-it \LL} S^*$ and define the map $\Us(t) : L^2_+(\T) \to L^2_+(\T)$ by
$$
(\Us(t) f )(z) := \langle (\id -z \Sigma_t^*)^{-1} f, 1 \rangle \quad \mbox{with $z \in \D$}.
$$
From {\cite[Lemma~8.1]{GeLe-26}} we recall the stability identity
\be \label{eq:stability_identity}
\| \Us(t) f \|_{L^2}^2  = \| f \|_{L^2}^2 - \lim_{n \to \infty} \| (\Sigma_t^*)^n f \|_{L^2}^2.
\ee
We remark that the limit on the right-hand side exists for all $f \in L^2_+(\T)$, since the non-negative sequence $\| f \|_{L^2} \geq \| \Sigma_t^* f \|_{L^2} \geq \| (\Sigma_t^*)^2 f \|_{L^2} \geq \ldots \geq 0$ is monotone-decreasing because $\Sigma_t^* = \eu^{-it \LL} S^*$ is a contraction.

Let us define the set
$$
J := \{ t \in \R : \mbox{$(\Sigma_t^*)^n u_0 \to 0$ in $L^2_+(\T)$ as $n \to \infty$} \}.
$$
We claim that
$$
I \subset J.
$$
We argue by contradiction. Suppose there is some $t_* \in I$ with $t_* \not \in J$, i.e., we have ${(\Sigma_{t_*}^*)^n} u_0 \not \to 0$ in $L^2_+(\T)$ as $n \to \infty$. By the explicit formula $u(t_*) = \Us(t_*) u_0$, whereas \eqref{eq:stability_identity} tells us that
$$
\| u(t_*) \|_{L^2} < \| u_0 \|_{L^2}.
$$
But this contradicts conservation of $L^2$-mass of $u \in C(I; H^\infty_+(\T))$. Hence we conclude that $I \subset J$.

{If $I=\R$, then $J=\R$ and the asserted endpoint formulas follow. The converse implication will follow from Step~2 below.}

\medskip
\textbf{Step 2.} It remains to discuss the case $I \neq \R$. Assume that
$$
T_+ := \sup \, I < +\infty
$$
is finite. We claim that
$$
\mbox{$T_+ \not \in J$ holds, i.e., we have $(\Sigma_{T_+}^*)^n u_0 \not \to 0$ in $L^2_+(\T)$ as $n \to \infty$.}
$$
We argue by contradiction and assume that $T_+ \in J$ holds. From the general discussion in \cite{GeLe-26}, we recall that
$$
\mbox{$\Us(t) u_0 \weakto \Us(T_+) u_0$ weakly in $L^2_+(\T)$ as $t \nearrow T_+$}{.}
$$
On the other hand, from $\|(\Sigma^*_{T_+})^n u_0 \|_{L^2} \to 0$ and \eqref{eq:stability_identity}, we deduce
$$
\| u_0 \|_{L^2} = \|\Us(t) u_0 \|_{L^2}  \to \| \Us(T_+) u_0 \|_{L^2}  = \| u_0 \|_{L^2} \quad \mbox{as} \quad t \nearrow T_+,
$$
using the $L^2$-mass conservation and the explicit formula $u(t) = \Us(t) u_0$ valid for $t \in I$. Therefore, we find that
$$
\mbox{$u(t)$ converges strongly in $L^2_+(\T)$ as $t \nearrow T_+$}.
$$
Take now $\eps > 0$ sufficiently small to ensure that 
$$I_\eps:=(T_+-\eps, T_+) \subset I.$$ 
By the strong convergence of $u(t)$ in $L^2_+(\T)$ as $t \nearrow T_+$,  the set $\mathcal{K} = \{ u(t) : t \in I_\eps \}$ is relatively compact in $L^2_+(\T)$ provided that $\eps > 0$ is sufficiently small. Thus we deduce the following tightness property using Fourier series: For every $\eta > 0$, there exists some integer $N \geq 1$ such that
$$
u_{\leq N}(t):= \sum_{0 \leq k \leq N} \widehat{u}_k(t) \eu^{ik x}, \quad u_{>N}(t) := \sum_{k > N} \widehat{u}_k(t) \eu^{ikx}
$$
satisfy
$$
\| u_{\leq N}(t) \|_{L^\infty}^2 \lesssim N \| u_0 \|_{L^2}^2, \quad \| u_{> N}(t) \|_{L^2}^2 \leq \eta \quad \mbox{for all $t \in I_\eps$}.
$$
Next, from \cite{Ba-24}, we recall the estimate 
$$
\| T_{\bar{g}} h \|_{L^2}^2 \leq \left ( \langle  D h,h \rangle + \| h \|_{L^2}^2 \right ) \| g \|_{L^2}^2
$$
for $g \in L^2_+(\T)$ and $h \in H^{1/2}_+(\T)$. For $t \in I_\eps$, this yields
\begin{align*}
\| T_{\bar{u}(t)} u(t) \|_{L^2}^2 & \lesssim \| T_{\bar{u}_{\leq N}(t)} u(t) \|_{L^2}^2 + \| T_{\bar{u}_{>N}(t)} u(t) \|_{L^2}^2 \\
& \lesssim  {N \| u_0 \|_{L^2}^4 + \eta\bigl(\langle D u(t),u(t)\rangle+\|u_0\|_{L^2}^2\bigr)}.
\end{align*}
 Taking $\eta > 0$ sufficiently small, the conservation of $I_1$ implies that
$$
I_1[u_0]  = I_1[u(t)] = \langle D u(t), u(t) \rangle - \| T_{\bar{u}(t)} u(t) \|_{L^2}^2  \geq \frac{1}{2} \langle D u(t), u(t) \rangle - {C_{N,u_0}}
$$
{with a constant $C_{N,u_0}>0$ independent of $t \in I_\eps$.} Hence we deduce that
\be \label{ineq:H12_bound}
\sup_{t \in I_\eps} \| u(t) \|_{H^{1/2}} < +\infty.
\ee
{The bound \eqref{ineq:H12_bound} contradicts the blow-up alternative in Proposition \ref{prop:lwp}, since $T_+=\sup I<+\infty$.}

Therefore, we conclude that $u(t)$ cannot converge strongly in $L^2_+(\T)$ as $t \nearrow T_+$. This shows that $T_+ \not \in J$ holds. Since $T_+ > 0$, we finally conclude that
$$
T_+ = \inf \{ t_* > 0 : t_* \in J^c \} = \inf \{ t_* > 0 : \mbox{${(\Sigma_{t_*}^*)^n} u_0 \not \to 0$ in $L^2_+(\T)$ as $n \to \infty$} \}.
$$
Finally, since we have shown that $T_+ \in J^c$, we see that the infimum is a minimum. The proof for the case with  $T_- = \inf I > -\infty$ is analogous. 

{This} finishes the proof of Proposition \ref{prop:stability_principle}.
\end{proof}

\section{Finite-gap potentials}

\label{sec:finite_gap}

This section collects some important facts about finite-gap potentials for \eqref{eq:CS}. Here it suffices to treat the case where $u_0 \in H^\infty_+(\T)$ is a finite-gap potential {which is a rational function} of the form \eqref{eq:finite_gap}. {For the remaining trivial case of plane-wave profiles $u_0(z) = C z^N$, see the remarks below.} Let us write
\be \label{eq:finite_gap_3}
u_0(z) = z^{m_0} \beta_{\bm{p}}(z) \left ( a + \sum_{j=1}^r \frac{c_j}{1- \bar{p}_j {z}} \right ),  \quad \beta_{\bm{p}}(z) := \prod_{j=1}^r \left ( \frac{z-p_j}{1- \bar{p}_j z} \right )^{m_j-1},
\ee
where $m_0 \geq 0$ and $\bm{p} = (p_1, \ldots, p_r) \in (\D \setminus \{ 0 \})^r$ with $p_j \neq p_k$ for $j \neq k$ and, for some $N \in \N$, $0 \leq m_0 \leq N-1$, $1 \leq m_1, \ldots, m_r \leq N$ such that
$m_0 + \sum_{j=1}^r m_j = N$, and the parameters $a, c_1, \ldots, c_r \in \C$ satisfy the constraints
\be \label{eq:constraints_fg}
{\bar{a} c_j + \sum_{k=1}^r \frac{c_j \bar{c}_k}{1- \bar{p}_j p_k} = m_j}
\ee
for $j=1, \ldots, r$ and $a \neq 0$ if $m_0 \neq 0$. We define the inner function\footnote{Recall that a bounded holomorphic function $\psi : \D \to \D$ is said to be {\em inner} if $|\psi(\zeta)|=1$ for a.e.~$\zeta \in \pt\D$. Note that {the} existence of boundary values a.e.~on $\pt \D$ of a bounded holomorphic {function} on $\D$ is a standard fact.}
\be
\psi_{u_0}(z) := z^{m_0} \prod_{j=1}^r \left  (\frac{z-p_j}{1-\bar{p}_j z} \right )^{m_j} \quad \mbox{for} \quad z \in \D,
\ee
which is a finite Blaschke product with zero $0$ (if $m_0 \geq 1$) and distinct {zeros} $p_1, \ldots, {p_r} \in \D \setminus \{ 0 \}$ with corresponding multiplicities $m_1, \ldots, {m_r} \geq 1$. Correspondingly, its degree is given by
$\mathrm{deg} \, \psi_{u_0} = m_0 + \sum_{j=1}^r m_j = N$. Note also that $\psi_{u_0}(z)$ has all its poles outside $\ov{\D}$ and thus it extends smoothly to the unit circle $\pt \D$. 

\subsection{Spectral properties and invariant subspaces}
 From {\cite[Section~5]{Ba-25}} we recall that, by using the finite-gap potential property of $u_0$ and in particular the constraints {\eqref{eq:constraints_fg}}, we obtain that
\be
L_{u_0} S^k \psi_{u_0} = (\nu_{u_0}+k) S^k \psi_{u_0} \quad \mbox{for $k \in \Z_{\geq 0}$}
\ee
with some $\nu_{u_0} \in \R$, where we recall that $S$ denotes the forward shift. Hence $\psi_{u_0}$ is an eigenfunction of $L_{u_0}$ with eigenvalue $\nu_{u_0}$ and the orthonormal set $\{ S^k \psi_{u_0} : k \in \Z_{\geq 0} \}$ is invariant under $L_{u_0}$ and $S$. {Since these eigenfunctions span $\psi_{u_0}L^2_+(\T)$, this closed subspace reduces the self-adjoint operator $L_{u_0}$; moreover, its invariance under $S$ implies that its orthogonal complement is invariant under $S^*$. Consequently,} the subspace
$$
W_0 := (\psi_{u_0} L^2_+)^\perp = \{ h \in L^2_+(\T) : \mbox{$\langle h, \psi_{u_0} g \rangle = 0$ for all $g \in L^2_+(\T)$} \}
$$
is invariant under $S^*$ and $L_{u_0}$. We remark that $W_0$ is the so-called {\em model space} generated by the inner function $\psi_{u_0}$. Moreover, since $\psi_{u_0}$ is a finite Blaschke product with $\mathrm{deg} \, \psi_{u_0} = N$, it is {well known that} $\dim W_0 = N$ holds; see, e.g., \cite{GaMaRo}. Let $\nu_{0}, \ldots, \nu_{N-1}$ denote the corresponding eigenvalues of $L_{u_0} |_{W_0}$. For later purposes, it turns out to be convenient to enlarge $W_0$ by one dimension by introducing the slightly {larger} model space
\be \label{eq:W_ortho}
W := (z \psi_{u_0} L^2_+)^\perp = \C \psi_{u_0} \oplus W_0,
\ee
where the orthogonal decomposition on the right-hand side is easily checked; see, e.g., {\cite[Lemma~5.10]{GaMaRo}}. Also, note that $W$ is spanned by eigenfunctions of $L_{u_0}$, whence it follows that $W \subset H^1_+(\T)$, where in fact we have $W\subset H^\infty_+(\T)$ since these {functions} are rational.

\begin{proposition} \label{prop:invariant}
The subspace $W$ has $\dim W = N+1$ and is invariant under $L_{u_0}$ and $S^*$. Moreover, we have $u_0 \in W$ and $1 \in W$.
\end{proposition}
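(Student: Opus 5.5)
The plan is to verify the four assertions in order: dimension, invariance under $L_{u_0}$, invariance under $S^*$, and membership of $1$ and $u_0$.

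\textbf{Dimension.} Since $z\psi_{u_0}$ is a finite Blaschke product of degree $N+1$, the standard model-space fact gives $\dim W = N+1$. Alternatively, the orthogonal decomposition \eqref{eq:W_ortho}, $W=\C\psi_{u_0}\oplus W_0$ with $\dim W_0=N$, gives the same count.

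\textbf{Invariance under $L_{u_0}$.} The eigenvalue relations $L_{u_0}S^k\psi_{u_0}=(\nu_{u_0}+k)S^k\psi_{u_0}$ for $k\geq 1$ show the following. The functions $S^k\psi_{u_0}$ with $k\geq1$ form an orthonormal basis of $z\psi_{u_0}L^2_+$ consisting of eigenfunctions of the self-adjoint operator $L_{u_0}$. Hence $z\psi_{u_0}L^2_+$ reduces $L_{u_0}$, and so does its orthogonal complement $W$.

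\textbf{Invariance under $S^*$.} The space $z\psi_{u_0}L^2_+$ is $S$-invariant, so its complement $W$ is $S^*$-invariant.

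\textbf{Membership of $1$.} For every $g\in L^2_+$ we have $\langle 1, z\psi_{u_0}g\rangle=\overline{(z\psi_{u_0}g)(0)}=0$. Hence $1\in W$.

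\textbf{Membership of $u_0$.} This is the main point. I need $\langle u_0, z\psi_{u_0}g\rangle=0$ for all $g\in L^2_+$. On $\T$ we have $|\psi_{u_0}|=1$, so this amounts to
\[
\Pi\bigl(\bar z\,\overline{\psi_{u_0}}\,u_0\bigr)=0,
\]
that is, $\overline{\psi_{u_0}}u_0$ has only Fourier modes of index $\leq 0$. I would compute this directly from \eqref{eq:finite_gap_3}. On $\T$,
\[
\overline{\psi_{u_0}}\,u_0=\bar z^{m_0}\,\overline{\prod_j\beta_{p_j}}\,\Bigl(a+\sum_j\frac{c_j}{1-\bar p_j z}\Bigr),
\]
where $\beta_{p_j}(z)=\frac{z-p_j}{1-\bar p_j z}$. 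Using $\overline{\beta_{p_j}(z)}=\frac{1-\bar p_j z}{z-p_j}$ on $\T$, this becomes
\[
\overline{\psi_{u_0}}\,u_0=\bar z^{m_0}\prod_j\frac{1-\bar p_j z}{z-p_j}\Bigl(a+\sum_j\frac{c_j}{1-\bar p_j z}\Bigr).
\]
Each term $\frac{c_k}{1-\bar p_k z}$ cancels one factor $1-\bar p_k z$ in the numerator. The result is $\bar z^{m_0}$ times a rational function whose poles lie only at the points $p_j\in\D$, and whose numerator degree is at most the denominator degree $r$.

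Such a rational function is bounded at $\infty$ with poles inside $\D$, so it is anti-analytic on $\T$: its Fourier modes have index $\leq 0$. Multiplying by $\bar z^{m_0}$ keeps the modes $\leq 0$. Hence the projection onto modes $\geq 1$ vanishes, and $u_0\in W$.

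I expect no real obstacle here; only the bookkeeping of the degree and pole locations needs care. Note that the constraints \eqref{eq:constraints_fg} are not needed for this step; they enter only through the eigenvalue relation quoted from \cite{Ba-25}.
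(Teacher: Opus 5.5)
Your proof is correct and follows the paper's argument essentially step by step. The dimension comes from \eqref{eq:W_ortho}, $S^*$-invariance from the model-space structure, and $L_{u_0}$-invariance from the eigenfunction relation of \cite{Ba-25}. Membership $u_0\in W$ follows from showing that $\overline{\phi_{\bm p}}\,\bigl(a+\sum_j c_j/(1-\bar p_j z)\bigr)$ is a rational function with poles in $\D$, bounded at infinity, and hence has only non-positive Fourier modes; the paper checks this term by term, while you combine the terms.

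One harmless slip: on $\T$ the factor $z^{m_0}$ in $u_0$ cancels the factor $\overline{z^{m_0}}$ coming from $\overline{\psi_{u_0}}$. So your formula for $\overline{\psi_{u_0}}\,u_0$ should not contain the extra $\bar z^{m_0}$, though this does not affect your conclusion.
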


\begin{remark*}
{\em For finite-gap potentials given by the trivial plane wave profiles $u_0(z) =C z^N$ with ${C\in \C}$ and some integer $N \geq 0$, we remark that the invariant subspace simply becomes
$W = (z^{N+1} L^2_+)^\perp = \mathrm{span} \{1, z, z^2, \ldots, z^N \}$. Recall that we here consider the non-trivial case where $u_0 \in H^\infty_+(\T)$ is a finite-gap potential of rational type as in \eqref{eq:finite_gap}.
}
\end{remark*}

\begin{proof}
From \eqref{eq:W_ortho} and $\dim W_0 = N$, we see that $\dim W = N+1$. Since $W$ is a model space, it is invariant under $S^*$. Because $W_0$ is invariant under $L_{u_0}$ and $\psi_{u_0}$ is an eigenfunction of $L_{u_0}$, it follows that $L_{u_0}(W) \subset W$. Next, we note that $1 \perp z L^2_+(\T)$ and hence $1 \in W$. To show that $u_0 \in W$ holds, let $g \in L^2_+(\T)$ be given and set $f(z) = a + \sum_{j=1}^r \frac{c_j}{1- \bar{p}_j z}$ so that $u_0(z) = z^{m_0} \beta_{\bm{p}}(z) f(z)$. Since $\bar{z} = z^{-1}$ on $\T$, we find
\begin{align*}
\langle u_0, z \psi_{u_0} g \rangle &  = \left \langle f, z \phi_{\bm{p}} g  \right \rangle \quad \mbox{with} \quad \phi_{\bm{p}}(z) := \prod_{j=1}^r \left ( \frac{z-p_j}{1- \bar{p}_j z} \right ) \\
& =  \left \langle a, z  \phi_{\bm{p}} g \right \rangle + \sum_{j=1}^r  \left \langle \frac{c_j}{1-\bar{p}_j z}  \ov{\phi_{\bm{p}}}, z  g\right \rangle =0 + \sum_{j=1}^r 0 = 0,
\end{align*}
using that $1 \perp z L^2_+$ and $\frac{1}{1-\bar{p}_j z} \ov{\phi_{\bm{p}}(z)} = \frac{ \prod_{\ell \neq j}(1-\bar{p}_\ell z)}{\prod_{\ell = 1}^r (z-p_{\ell})}\perp L^2_+$, since $p_\ell \in \D$ for $\ell =1, \ldots, r$.
\end{proof}

\subsection{Reduction of explicit formula to $W$}

Assume now that $u_0 \in H^\infty_+(\T)$ is a finite-gap potential of the form \eqref{eq:finite_gap_3}. Hence $L_{u_0}$ and $S^*$ act invariantly on the finite-dimensional subspace $W$ with $1, u_0 \in W$. For the corresponding maximal solution $u \in C(I; {H^\infty_+(\T)})$ with $u(0)=u_0$, the explicit formula \eqref{eq:EF} for $u(t,z)$ reduces to $W$. That is, we can write
\be
u(t,z) = \langle \left (\id-z A_t)^{-1} u_0, 1 \right \rangle \quad \mbox{for $(t,z) \in I \times \D$},
\ee
with the time-dependent family of contractions
$$
A_t := \eu^{-it \LL} S^* |_W \quad \mbox{with} \quad t \in \R
$$
obtained by restriction of  $\LL = 2L_{u_0} + \id$ and $S^*$ to the finite-dimensional invariant subspace $W$. 

We will now adapt the general {\em stability principle} for {explicit formulae} derived in \cite{GeLe-26} to the {present setting}. By the finite-dimensionality of $W$, the spectrum of $A_t$ only contains eigenvalues, {and hence the analysis simplifies substantially}. Let $r(A_t)$ denote the {\em spectral radius} of $A_t$ for $t \in \R$, i.e., we set
$$
r(A_t) := \sup \{ |\lambda| : \lambda \in \sigma(A_t) \}.
$$
Since $A_t$ acts on the finite-dimensional space $W$, we have that 
$$
r(A_t) = \max \{ |\lambda| : \mbox{$\lambda \in \C$ {is an eigenvalue} of $A_t$} \}.
$$
Furthermore, since {$A_t$ is a contraction for all $t \in \R$}, we clearly have that
$$
r(A_t) \leq 1 \quad \mbox{for any $t \in \R$}.
$$ 
By using the circle of ideas from the general discussion in \cite{GeLe-26}, we obtain the following result.

\begin{lemma}[Blow-up criterion for finite-gap potentials] \label{lem:blow_up_criterion}
Suppose $u_0 \in H^\infty_+(\T)$ is a finite-gap potential and let $u \in C(I; H^\infty_+(\T))$ denote the corresponding maximal solution of \eqref{eq:CS} with $u(0)=u_0$. Then the following holds true.
\begin{itemize}
\item[(i)] \textbf{Global existence:} $I = \R$ holds if and only if $r(A_t) < 1$ for all $t \in \R$.
\item[(ii)] \textbf{Finite-time blow-up:} $I \neq \R$ if and only if $r(A_{t_*}) = 1$ for some $t_* \in \R$. 
\item[(iii)] \textbf{Characterization of maximal time interval}:  We have
$$
\sup I = \inf \left \{ t_* > 0 : r(A_{t_*}) = 1 \right \} ,
$$
$$
\inf I = \sup \left \{ t_* < 0 : r(A_{t_*}) = 1 \right \},
$$
with the usual convention that $\inf \emptyset = +\infty$ and $\sup \emptyset = -\infty$.
\end{itemize}
\end{lemma}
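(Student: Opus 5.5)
The plan is to reduce everything to Proposition \ref{prop:stability_principle}. The only new ingredient is the following claim: for each fixed $t\in\R$,
$$
(\eu^{-it\LL}S^*)^n u_0 \to 0 \ \text{in } L^2_+(\T) \quad\Longleftrightarrow\quad r(A_t)<1 .
$$
Given this claim, items (i)--(iii) follow at once. Item (iii) is the endpoint formula of Proposition \ref{prop:stability_principle} with the set $\{t_*: (\Sigma_{t_*}^*)^n u_0\not\to 0\}$ replaced by $\{t_*: r(A_{t_*})=1\}$, using also $r(A_t)\le 1$. Item (i) is the last statement of that proposition. Item (ii) is the contrapositive of (i): the condition ``$r(A_t)<1$ for all $t$'' fails exactly when $r(A_{t_*})=1$ for some $t_*$.

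\textbf{Easy direction.} Since $u_0\in W$ and $W$ is invariant under $S^*$ and $\LL$ (Proposition \ref{prop:invariant}), we have $(\eu^{-it\LL}S^*)^n u_0=A_t^n u_0$. If $r(A_t)<1$, then on the finite-dimensional space $W$ Gelfand's formula (or the Jordan form) gives $\|A_t^n\|\to 0$. Hence $A_t^n u_0\to 0$.

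\textbf{Converse direction.} Suppose $r(A_t)=1$. I will show that $A_t^n u_0\not\to 0$; this is the main obstacle. The difficulty is that $u_0$ might a priori have no component in the unimodular spectral part of $A_t$.

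The first step is structural. Since $A_t$ is a contraction on a finite-dimensional Hilbert space, every unimodular eigenvalue is semisimple. Moreover $W=W_u\oplus W_s$ orthogonally, where $W_u$ is spanned by the eigenvectors with unimodular eigenvalues and $W_s$ is the rest. Both subspaces reduce $A_t$, $A_t|_{W_u}$ is unitary, and $r(A_t|_{W_s})<1$. This is the standard fact that $A f=\lambda f$ with $|\lambda|=1$ and $\|A\|\le1$ implies $A^*f=\bar\lambda f$.

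Consequently, $A_t^n u_0\to 0$ if and only if $P_u u_0=0$, where $P_u$ is the orthogonal projection onto $W_u$. So it suffices to find a unimodular eigenvector $f$ with $\langle u_0,f\rangle\neq 0$.

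The second step produces such an $f$ from the orbit of the vector $1$ under $A_t^*$. Take a unimodular eigenvector $f$, so $A_t^*f=\bar\lambda f$ and $(\eu^{-it\LL}S^*)^*=S\eu^{it\LL}$ acts on $f$ by multiplication by $\bar\lambda$ on $W$. Note that $S\eu^{it\LL}f$ lies in $zL^2_+$, since it is in the range of $S$. It is also equal to $\bar\lambda f$, so $\langle f,1\rangle=0$.

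I then plan to use the eigenvalue relation together with the identity $\LL S-S\LL=2[L_{u_0},S]$ to compute $[L_{u_0},S]h=Sh-\langle h,\bar{u}_0\ldots\rangle$ type rank-one terms. More precisely, $[D,S]=S$ and $[T_{u_0}T_{\bar u_0},S]g=-\langle g,u_0\rangle\, u_0$ up to $S$-factors. From these I expect to show that if $\langle f,u_0\rangle=0$, then $f$ is also a joint eigenvector of $S$. Since $S$ has no eigenvectors on $L^2_+$, this forces $f=0$, a contradiction. In fact the virial identity announced in the introduction, $|\langle u_0,f\rangle|=1$, is the sharp form of this computation, and I would aim to derive at least $\langle u_0,f\rangle\ne0$ by that route.

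As a fallback I would argue via Proposition \ref{prop:stability_principle} by contradiction. Let $t_1$ be the least $|t|$ with $r(A_t)=1$. If $P_u u_0=0$ at such times, the solution would continue through them. Then I would invoke the analytic dependence of the Riesz projection together with mass conservation, in the form of identity \eqref{eq:stability_identity}, to exclude this case.
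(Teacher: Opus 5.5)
Your reduction to the equivalence $A_t^nu_0\to0 \iff r(A_t)<1$ matches the paper, and several pieces are correct: the derivation of (i)--(iii) from Proposition \ref{prop:stability_principle}, the Gelfand argument for the easy direction, the orthogonal reducing splitting $W=W_{\mathrm{u}}\oplus W_{\mathrm{u}}^\perp$, and the observation $f\perp 1$.

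\textbf{The gap.} It is exactly the step you flag as the main obstacle: you never prove that some unimodular eigenvector $f$ of $A_t$ satisfies $\langle u_0,f\rangle\neq0$.
\begin{itemize}
\item The intermediate target ``if $\langle f,u_0\rangle=0$ then $f$ is a joint eigenvector of $S$'' is only an expectation.
\item Your commutator formula is given only ``up to $S$-factors'' and with the wrong sign. The correct identity is $[T_{u_0}T_{\bar u_0},S]g=\langle g,S^*u_0\rangle u_0$.
\item The fallback does not work. If $P_{\mathrm u}u_0=0$ at the first contact time $t_1$, then $A_{t_1}^nu_0\to0$, and \eqref{eq:stability_identity} gives exact mass conservation at $t_1$. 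Neither mass conservation nor analyticity of the Riesz projections contradicts this scenario.
\end{itemize}
What excludes it is an algebraic identity tying $u_0$ to the unitary eigenvectors. That identity is the real content of the lemma beyond Proposition \ref{prop:stability_principle}.

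\textbf{How to close it.} The paper supplies this identity in Lemma \ref{lem:Wu}: every normalized $f$ with $A_tf=\omega f$ and $|\omega|=1$ satisfies $|\langle u_0,f\rangle|=1$. Your commutator idea does reach it, if carried out with the correct identity $[S^*,L_{u_0}]=S^*-\langle\cdot,u_0\rangle S^*u_0$ on $H^1_+(\T)$. Apply $\eu^{-it\LL}$, which commutes with $L_{u_0}$, and use $f,\,L_{u_0}f,\,u_0\in W$. This gives
\[
(A_t-\omega)L_{u_0}f=\omega f-\langle f,u_0\rangle A_tu_0 .
\]
Since $A_t^*f=\bar\omega f$, the range of $A_t-\omega$ is orthogonal to $f$. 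Pairing with $f$ then yields
\[
0=\omega\bigl(\|f\|^2-|\langle f,u_0\rangle|^2\bigr).
\]
In particular, $\langle f,u_0\rangle=0$ would give $(A_t-\omega)L_{u_0}f=\omega f$, which is impossible for this reason alone; no eigenvectors of $S$ are needed.

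The paper does the equivalent computation differently: it pairs $L_{u_0}S^*f$ with $S^*f$, using $SS^*f=f$ and $\langle L_{u_0}S^*f,S^*f\rangle=\langle L_{u_0}f,f\rangle$. Once $\langle u_0,f\rangle\neq0$ is known, your structural step finishes the proof: writing $u_0=\langle u_0,f\rangle f+g$ with $A_t^ng\perp f$ gives $\|A_t^nu_0\|^2\geq|\langle u_0,f\rangle|^2$ for all $n$.
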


\begin{remarks*}
{\em 1) By local well-posedness and (i) above, we deduce that $r(A_t) < 1$ for {$0<|t|<\delta$ with some $\delta>0$}.

{2) The sufficiency of the spectral-radius condition in (ii) relies on the finite-dimensionality of $W$. Indeed, on the full space $L^2_+(\T)$ one has
\[
r(S^*)=1,
\]
although $(S^*)^nf\to0$ for every $f\in L^2_+(\T)$. Thus the spectral-radius criterion is not valid in this form on the full Hardy space.}}
\end{remarks*}

\begin{proof}
Let $u_0 \in H^\infty_+(\T)$ be a finite-gap potential. We omit the trivial case when $u(z) = C z^N$ is a traveling plane wave profile and assume that $u_0 \in H^\infty_+(\T)$ is of the form {\eqref{eq:finite_gap_3}} in what follows.

In view of Proposition \ref{prop:stability_principle} and since $r(A_t) \leq 1$, it remains to show the equivalence 
\be
\mbox{$A_t^n u_0 \to 0$ in $L^2_+(\T)$ as $n \to \infty$} \quad \Longleftrightarrow \quad r(A_t) < 1
\ee
for any $t \in \R$.

\medskip
{\bf Step 1.} Assume that $r(A_t) < 1$ holds for some $t \in \R$. We recall Gelfand's formula $r(T) = \lim_{n \to \infty} \| T^n \|^{1/n}$ for any bounded linear operator $T$. Let us pick $\rho>0$ such that $r(A_t) < \rho < 1$. Take now $n_0 \geq 1$ sufficiently large such that $\|A_t^n \|^{1/n} \leq \rho$ for $n \geq n_0$. Hence $\| A_t^n \| \leq \rho^n$ for $n \geq n_0$. Since $\rho < 1$, we deduce that $\| A_t^n \| \to 0$ as $n \to \infty$. In particular, we deduce that 
$$
\mbox{$A_t^n u_0 \to 0$ in $L^2_+(\T)$ as $n \to \infty$}. 
$$

\medskip
{\bf Step 2.} Suppose that $A_t^n u_0 \to 0$ in $L^2_+(\T)$ as $n \to \infty$ for some $t \in \R$. We argue by contradiction and assume $r(A_t)=1$ holds.  Let us denote $A=A_{t}$ for notational simplicity. Since $A : W \to W$ acts on the finite-dimensional space $W$, we deduce that $A$ has a unimodular eigenvalue $\omega \in \pt \D$. Let $f \in W$ be a normalized eigenfunction with $A f = \omega f$. By Lemma \ref{lem:Wu} below, we find that
$|\langle u_0, f \rangle | = 1$. Thus we can decompose $u_0 \in W$ as
$$
u_0 = \alpha f + g \quad \mbox{with} \quad  \alpha \neq 0 \quad \mbox{and} \quad g \perp f.
$$
We now claim that this implies  
\be \label{eq:contradiction}
\mbox{$A^n u_0 \not \to 0$ in $L^2_+$ as $n \to \infty$},
\ee
which would lead to the desired contradiction.  To show the claim above, we first note that $A^n g \perp f$ for all $n \geq 0$. Indeed, from $\| A f \|_{L^2} = |\omega| \| f \|_{L^2} = \| f \|_{L^2}$ we deduce that 
$$
{\left \langle (\id - A^* A)f, f\right \rangle = \|f \|_{L^2}^2 - \| A f \|_{L^2}^2=0.}
$$
Since $A$ is a contraction, we have that $\id - A^* A \geq 0$. Hence $(\id - A^* A) f = 0$ and therefore $A^* A f = f$. Using now that $A f = \omega f$, this shows ${f = A^*(\omega f) = \omega A^*f}$, whence it follows that $A^* f = \overline{\omega} f$. Therefore, for any $g \perp f$ and $n \geq 0$, we obtain $\langle A^n g, f \rangle = \langle g, (A^*)^n f \rangle = \omega^n \langle g, f \rangle = 0$. This shows that $A^n g \perp f$ for all $n \geq 0$. Now, we deduce
$$
{A^n u_0 = \alpha \omega^n f + A^n g} \quad \mbox{with $A^n g \perp f$ for all $n \geq 0$}.
$$
Therefore, we see $\| A^n u_0\|_{L^2}^2 = |\alpha|^2 + \| A^n g \|_{{L^2}}^2 \not \to 0$ since $\alpha \neq 0$. Thus we arrive at the desired contradiction in \eqref{eq:contradiction}.
\end{proof}

\section{Universal blow-up dynamics for finite-gap potentials}

This section is devoted to the proof of Theorem \ref{thm:universal}. Throughout the following, we suppose that $u_0 \in H^\infty_+(\T)$ is a finite-gap potential and that the corresponding solution $u \in C([0,T); H^\infty_+(\T))$ blows up at finite time $T> 0$. Since the trivial cases $u_0(z) = C z^N$ with $C \in \C$ and integer $N \geq 0$ yield plane wave solutions of \eqref{eq:CS}, which evidently do not lead to finite-time blow-up, we henceforth assume that $u_0 \in H^\infty_+(\T)$ is a finite-gap potential of the form \eqref{eq:finite_gap_3}. 

For the reader's convenience, we recall that the finite-dimensional model space 
$$
W = (z \psi_{u_0} L^2_+)^\perp \quad \mbox{with} \quad \dim W = N+1
$$
is invariant under $L_{u_0}$ and $S^*$, and we have that $u_0 \in W$ and $1 \in W$. Thus the explicit formula for the finite-time blow-up solution $u \in C([0,T); H^\infty_+(\T))$ reduces to $W$ with
$$
u(t,z) = \langle (\id-z A_t)^{-1} u_0, 1 \rangle \quad \mbox{for $(t,z) \in [0,T) \times \D$}
$$
with the family of contractions $A_t : W \to W$ given by
$$
A_t = \eu^{-it \LL} {S^*|_W} \quad \mbox{for $t \in \R$}
$$
where $\LL = 2 L_{u_0}+ \id$. By assumption, the solution $u(t)$ blows up as $t \nearrow T$ and hence, by Lemma \ref{lem:blow_up_criterion},
$$
T = \min \{ t_* > 0 : r(A_{t_*}) = 1 \}.
$$
{Since $A_T$ acts on the finite-dimensional space $W$, it follows that $A_T$ has at least one unimodular eigenvalue. We denote its unimodular eigenvalues, repeated according to their multiplicities, by $\omega_1, \ldots, \omega_M \in \pt \D$.} Below we will derive a universal upper bound on $M \geq 1$ along with simplicity of the unimodular eigenvalues of $A_T$. Also, since $A_T$ is a contraction, it is easy to see that we obtain the orthogonal decomposition
$$
W = W_{\mathrm{u}} \oplus W_{\mathrm{u}}^\perp{.}
$$
{Here $W_{\mathrm{u}}$ denotes the unitary part of $A_T$. Both $W_{\mathrm{u}}$ and $W_{\mathrm{u}}^\perp$ are invariant subspaces of $A_T$.} Since $A_T |_{W_{\mathrm{u}}}$ is unitary, we can always find an orthonormal set of eigenvectors $f_1, \ldots, f_M \in W$ that form a basis for $W_{\mathrm{u}}$.

\subsection{Spectral properties of $A_T$}

We first show the following identity, which will be essential for our blow-up analysis of $u(t,z)$ as $t \nearrow T$. We remark that the following virial identity is reminiscent of the so-called `Wu-type' identity obtained for eigenfunctions of the Lax operator for many integrable PDEs with a Hardy space structure. Here we obtain an identity related to the eigenfunctions that span the unitary part of $A_t$.
\begin{lemma}[Virial identity for the unitary part of $A_t$] \label{lem:Wu}
Let $T \in \R$ be given. Suppose that $f\in W$ and $\omega \in \C$ satisfy
$$
A_T f = \omega f  \quad \mbox{with} \quad \|f \|_{L^2}=1 \quad \mbox{and} \quad |\omega| = 1.
$$
Then {we have}
$$
\boxed{|\langle  u_0,f \rangle| = 1}{.}
$$
Moreover, the unimodular eigenvalue $\omega$ for $A_T$ is simple.
\end{lemma}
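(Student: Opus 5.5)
The plan is to compute the quadratic form $\langle L_{u_0} f, f\rangle$ in two ways, using the eigenvalue relation together with a commutator identity between $S^*$ and $L_{u_0}$; the rank-one defect in that commutator is exactly what produces $|\langle u_0,f\rangle|^2$. Throughout, $L=L_{u_0}$, and all manipulations take place on $W\subset H^\infty_+(\T)$, so domain issues are absent.

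\textbf{Step 1 (consequences of unimodularity).} Since $A_T=\eu^{-iT\LL}S^*|_W$ and $\eu^{-iT\LL}$ is unitary, $|\omega|=1$ gives $\|S^*f\|=\|f\|=1$. Using $\|S^*f\|^2=\|f\|^2-|\langle f,1\rangle|^2$, we get $f(0)=\langle f,1\rangle=0$, hence $SS^*f=f$. Also $S^*f=\omega\,\eu^{iT\LL}f$. Since $\eu^{iT\LL}$ commutes with $L$ and $|\omega|=1$, this yields
$$\langle LS^*f,S^*f\rangle=\langle Lf,f\rangle .$$

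\textbf{Step 2 (commutator identity).} On monomials one checks $S^*D-DS^*=S^*$. For the Toeplitz part, the following two facts hold.
\begin{itemize}
\item $T_{\bar u_0}S^*=S^*T_{\bar u_0}$, since both equal $T_{\bar z\bar u_0}$, antiholomorphic symbols composing.
\item $S^*T_{u_0}g=T_{u_0}S^*g+\langle g,1\rangle S^*u_0$.
\end{itemize}
Together these give $S^*T_{u_0}T_{\bar u_0}h=T_{u_0}T_{\bar u_0}S^*h+\langle h,u_0\rangle S^*u_0$. Therefore
$$LS^*=S^*L-S^*+\langle\cdot,u_0\rangle S^*u_0 .$$

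\textbf{Step 3 (the identity).} Insert Step 2 into $\langle LS^*f,S^*f\rangle$. Using $SS^*f=f$ from Step 1, we have $\langle S^*Lf,S^*f\rangle=\langle Lf,f\rangle$ and $\langle S^*u_0,S^*f\rangle=\langle u_0,f\rangle$. This gives
$$\langle Lf,f\rangle=\langle Lf,f\rangle-1+|\langle u_0,f\rangle|^2,$$
so $|\langle u_0,f\rangle|=1$.

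\textbf{Step 4 (simplicity).} First, the geometric multiplicity is one. If $\ker(A_T-\omega)$ had dimension at least two, it would contain a unit vector $h$ with $\langle u_0,h\rangle=0$, since this is one linear condition on a space of dimension at least two. But $h$ is again a normalized eigenvector for $\omega$, contradicting Step 3.

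Second, there is no nontrivial Jordan block. A Jordan block for a unimodular eigenvalue would make $\|A_T^n\|$ grow linearly in $n$, contradicting $\|A_T^n\|\le1$. Alternatively, the argument in the proof of Lemma~\ref{lem:blow_up_criterion} shows $A_T^*f=\bar\omega f$, so $f^\perp$ is $A_T$-invariant and $\omega$ is semisimple. Hence $\omega$ is algebraically simple.

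The main obstacle is Step 2: getting the Toeplitz commutator with the correct rank-one term and signs, consistent with the normalization $\LL=2L+\id$. Everything else is bookkeeping, once $f(0)=0$ is extracted from unimodularity.
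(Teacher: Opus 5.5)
Your proposal is correct and follows the paper's proof essentially step for step. Unimodularity forces $\langle f,1\rangle=0$ and $SS^*f=f$, and the commutator identity $L_{u_0}S^*=S^*L_{u_0}-S^*+\langle\cdot,u_0\rangle S^*u_0$ turns $\langle L_{u_0}S^*f,S^*f\rangle=\langle L_{u_0}f,f\rangle$ into $|\langle u_0,f\rangle|=1$. Simplicity then follows from the linear-form argument together with the linear-growth contradiction for a Jordan chain. The only differences are minor: you derive the commutator identity directly where the paper quotes it from Badreddine's work, and you add an alternative semisimplicity argument via $A_T^*f=\bar\omega f$. Both are valid.
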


\begin{remarks*}{\em
1) {The identity above extends beyond finite-gap potentials. More precisely, for any $u_0\in L^2_+(\T)$, every normalized vector $f$ satisfying $\eu^{-it\LL}S^*f=\omega f$ with $|\omega|=1$ obeys $|\langle u_0,f\rangle|=1$; see \cite{ChLe-26}.} However, the proof here is much simpler as it avoids any domain issues and approximation arguments, since in the general case it is not guaranteed that $f \in \dom(L_{u_0})$ holds.

2) In the proof below, we will first show that $\dim \ker (A_T - \omega \id) = 1$ holds, i.e., the unimodular eigenvalue $\omega$ has geometric multiplicity $m_{\mathrm{geom}}(\omega)=1$. {To prove simplicity of $\omega$, we must also show that its algebraic multiplicity satisfies $m_{\mathrm{alg}}(\omega)=1$. For this we use the fact that $A_T$ is a contraction and that $|\omega|=1$.} This simplicity of the unimodular eigenvalues of $A_T$ will be essential for the perturbation analysis below.}
\end{remarks*}

\begin{proof}
We divide the proof into two steps as follows.

\medskip
\textbf{Step 1.} For notational convenience, we write $u= u_0$ and $L_u = L_{u_0}$. Since $\LL = 2 L_u + \id$, the assumption $A_T f = \omega f$  yields that
\be
\eu^{-i \tau L_u} S^* f = \lambda f
\ee
with {$\tau = 2T$ and the unimodular eigenvalue $\lambda = \eu^{iT} \omega \in \pt \D$}. Since $\eu^{-i \tau L_u}$ is unitary, 
$$
1 = \| \eu^{-i \tau L_u} S^* f \|_{L^2}^2 = \| S^* f \|_{L^2}^2 = \| f \|_{L^2}^2 - |\langle f, 1 \rangle |^2.
$$
Since $\| f \|_{L^2}^2=1$, this shows that $f \perp 1$ and therefore $S S^* f = f$. Because $S^* f = \lambda \eu^{i \tau L_u} f$ and the unitary map $\eu^{i \tau L_u}$ commutes with $L_u$, we find
\be \label{eq:Wu1}
\langle L_u S^*f, S^* f \rangle =  \langle L_u f, f \rangle,
\ee
using also that $|\lambda| = 1$.

On the other hand, from \cite{Ba-24} we recall the commutator identity $[S^*, L_u] = S^* - \langle \cdot , u \rangle S^* u$ on $H^1_+(\T)$. Since $f \in W \subset H^1_+(\T)$, it follows
$$
L_u S^* f = S^* L_u f - S^* f +\langle f, u \rangle S^*u{.}
$$
Taking the inner product with $S^* f$, we obtain
$$
\langle L_u S^*f, S^*f  \rangle = \langle S^* L_u f, S^* f \rangle - \| S^* f \|_{L^2}^2 + \langle f,u \rangle \langle S^*u, S^*f \rangle.
$$
Since $S S^*f = f$, we find
$$
\langle S^* L_u f, S^*f \rangle = \langle L_u f,f \rangle, \quad \langle S^* u, S^* f \rangle = \langle u, f \rangle.
$$
In view of \eqref{eq:Wu1} and $\| S^* f \|_{L^2}^2 = 1$, we deduce 
$$
\langle L_u f, f \rangle = \langle L_u f,f \rangle - 1 + |\langle u,f \rangle|^2,
$$
which shows that $|\langle u,f \rangle| = 1$ holds.

\medskip
\textbf{Step 2.}  Set $V = \ker ( A_T - \omega \id)$ and define the linear form $\ell : V \to \C$ with $\ell(f) := \langle  f,u \rangle$. From {the identity obtained in the previous step}, we deduce that $|\ell(f)| = 1$ for all $f \in V$ with $\| f \|_{L^2}=1$. This shows that $\ker \, \ell = \{ 0 \}$ is {trivial}, which implies that $\dim V =1$ holds. Thus the geometric multiplicity of $\omega$ is equal to one.

To prove that the algebraic multiplicity of $\omega$ is also one, we must rule out nontrivial Jordan blocks. We argue by contradiction and {suppose that there exists $h \in W$ such that}
$$
(A_T - \omega \id) h = f.
$$
By iteration, we deduce that 
$$
A_T^n h = \omega^n h + n \omega^{n-1} f \quad \mbox{for $n \geq 1$}.
$$
Since $|\omega|=1$, the right-hand side has norm growing at least linearly in $n$. On the other hand, because $A_T$ is a contraction, we must have $\| A_T^n h \| \leq \| h \|$ for any $n \geq 1$, which leads to a contradiction. {Thus the algebraic multiplicity of $\omega$ is one.}

This completes the proof of Lemma \ref{lem:Wu}.
\end{proof}

As a consequence of Lemma \ref{lem:Wu}, we obtain the following fact.

\begin{lemma} \label{lem:simple}
Assume that $f_1, \ldots, f_M \in W$ {form} an orthonormal set of eigenvectors for $A_T$ with corresponding unimodular eigenvalues $\omega_1,\ldots, \omega_M$, i.e., we have $\langle f_j, f_k \rangle = \delta_{jk}$ and
$$
A_T f_j = \omega_j f_j \quad \mbox{and} \quad |\omega_j| = 1 \quad \mbox{for} \quad j=1, \ldots, M.
$$ 
Then we have
$$
\boxed{1 \leq M \leq \| S^* u_0 \|_{L^2}^2 \leq \| u_0 \|_{L^2}^2} \quad \mbox{and} \quad \boxed{\omega_j \neq \omega_k \quad \mbox{if} \quad j \neq k}{.}
$$
\end{lemma}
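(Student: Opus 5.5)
The plan is to combine the virial identity of Lemma \ref{lem:Wu} with Bessel's inequality, applied to the vector $S^*u_0$ rather than to $u_0$ itself. The estimate $M \geq 1$ is part of the setup: $M$ counts the unimodular eigenvalues of $A_T$, and there is at least one.

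For the upper bound, we first use Step 1 of the proof of Lemma \ref{lem:Wu}, which gives $f_j \perp 1$. Hence $S S^* f_j = f_j$, so $S^*$ is isometric on $\mathrm{span}\{f_1,\dots,f_M\}$. Consequently the vectors $g_j := S^* f_j$ form an orthonormal set, because
$$
\langle S^* f_j, S^* f_k\rangle = \langle S S^* f_j, f_k\rangle = \langle f_j,f_k\rangle = \delta_{jk}.
$$
Next we evaluate the inner products $\langle S^* u_0, g_j\rangle$. Since $f_j \perp 1$, the relation $\langle S^*u_0, S^*f_j\rangle = \langle u_0, SS^*f_j\rangle = \langle u_0, f_j\rangle$ holds, and Lemma \ref{lem:Wu} shows this has modulus $1$. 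Bessel's inequality for the orthonormal set $\{g_j\}$ then yields
$$
M = \sum_{j=1}^M |\langle S^* u_0, g_j\rangle|^2 \leq \|S^* u_0\|_{L^2}^2 = \|u_0\|_{L^2}^2 - |\langle u_0,1\rangle|^2 \leq \|u_0\|_{L^2}^2 .
$$

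For the distinctness of the eigenvalues, suppose $\omega_j = \omega_k$ with $j \neq k$. Then $f_j$ and $f_k$ are two orthonormal vectors in $\ker(A_T - \omega_j\id)$. This contradicts the second part of Lemma \ref{lem:Wu}, which says this kernel is one-dimensional.

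The only delicate point is choosing to run Bessel's inequality on $S^*u_0$, which gives the sharper middle bound $\|S^*u_0\|^2$, instead of on $u_0$. This works only after checking that $S^*$ preserves orthonormality of the $f_j$, and that check rests on $f_j \perp 1$. Everything else follows directly from Lemma \ref{lem:Wu}.
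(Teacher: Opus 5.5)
Your proposal is correct and follows essentially the same argument as the paper. Both proofs use $f_j \perp 1$ from Step 1 of Lemma \ref{lem:Wu} to get an orthonormal set $\{S^*f_j\}$ with $|\langle S^*u_0, S^*f_j\rangle| = 1$, then apply Bessel's inequality. Distinctness of the $\omega_j$ comes from the one-dimensionality of the unimodular eigenspaces in Lemma \ref{lem:Wu}.
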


\begin{proof}
As usual, we write $u=u_0$ for notational simplicity. From the proof of Lemma \ref{lem:Wu} we know that $f_j \perp 1$ and thus $S S^* f_j = f_j$ for $j=1, \ldots, M$. Thus $\langle S^* f_j, S^*  f_k \rangle = \langle f_j, f_k \rangle = \delta_{jk}$ and the vectors $S^* f_1, \ldots, S^* f_M$ form an orthonormal set. Since  $|\langle f_j, u \rangle| = |\langle S^* f_j, S^* u \rangle| = 1$ by Lemma \ref{lem:Wu}, we apply Bessel's inequality to deduce
 $$
 M = \sum_{j=1}^M |\langle f_j, u \rangle|^2 = \sum_{j=1}^M |\langle S^* f_j, S^* u \rangle|^2 \leq \| S^* u \|_{L^2}^2 \leq \|u \|_{L^2}^2,
 $$
 where the last inequality is obviously true, since $S^*$ is a contraction. Finally, the assertion that $\omega_j \neq \omega_k$ for $j \neq k$ follows from $f_j \perp f_k$ for $j \neq k$ and the simplicity of unimodular eigenvalues in Lemma \ref{lem:Wu}.
\end{proof}

For later use, we also show that  $u_0$ cannot entirely lie in the unitary subspace $W_{\mathrm{u}} = \mathrm{span} \, \{f_1, \ldots, f_M \}$ of $A_T$. As a consequence, the inequality in Lemma \ref{lem:simple} for the dimension $M$ in Lemma \ref{lem:simple} can be sharpened.

\begin{lemma} \label{lem:no_total_loss}
It holds that $u_0 \not \in W_{\mathrm{u}}$. Consequently, we have that $1 \leq M < \| u_0 \|_{L^2}^2$ holds in Lemma \ref{lem:simple}.
\end{lemma}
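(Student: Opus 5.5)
The plan is to reduce the lemma to an equality case of Bessel's inequality, and then rule out that equality case via the contraction structure of $A_T$ and the explicit formula.

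\textbf{Reduction.} Write $u=u_0$ and let $f_1,\ldots,f_M$ be the orthonormal eigenbasis of $W_{\mathrm{u}}$ from Lemma \ref{lem:simple}. That lemma gives
\[
M=\sum_{j=1}^M|\langle u,f_j\rangle|^2\le \|P_{\mathrm{u}}u\|_{L^2}^2\le\|u\|_{L^2}^2,
\]
where $P_{\mathrm{u}}$ is the orthogonal projection onto $W_{\mathrm{u}}$. The last inequality is an equality if and only if $u\in W_{\mathrm{u}}$. Hence the strict bound $M<\|u_0\|_{L^2}^2$ follows from $u_0\notin W_{\mathrm{u}}$, and it suffices to prove the first claim. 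I argue by contradiction and assume $u_0=\sum_j\alpha_jf_j$ with $|\alpha_j|=1$.

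\textbf{Consequences of $u_0\in W_{\mathrm{u}}$.} By the proof of Lemma \ref{lem:Wu}, every $f_j\perp 1$. So $W_{\mathrm{u}}\perp 1$, and $W_{\mathrm{u}}$ is invariant under both $A_T$ and $A_T^*$, since $A_T^*f_j=\bar\omega_jf_j$. Three facts follow.
\begin{itemize}
\item $A_T^nu_0=\sum_j\alpha_j\omega_j^nf_j$ has constant norm, so $\|A_T^nu_0\|_{L^2}=\|u_0\|_{L^2}$ for all $n$.
\item Consequently $\langle A_T^nu_0,1\rangle=0$ for all $n\ge0$, i.e.\ $\Us(T)u_0\equiv0$. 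This also follows from \eqref{eq:stability_identity}, which gives $\|\Us(T)u_0\|^2=\|u_0\|^2-\|u_0\|^2=0$.
\item From the weak continuity $u(t)=\Us(t)u_0\weakto\Us(T)u_0$ used in the proof of Proposition \ref{prop:stability_principle}, we get $u(t)\weakto0$ in $L^2_+$ as $t\nearrow T$.
\end{itemize}
In the finite-dimensional picture, the resolvent $(\id-zA_t)^{-1}$ then splits as in the later Riesz-projection decomposition. The regular part $\langle(\id-zA_t)^{-1}(\id-P_t)u_0,1\rangle$, where $P_t$ is the Riesz projection onto the continued unimodular branches, then tends to $0$. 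So all of the mass would be carried by the $M$ concentrating bubbles.

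\textbf{Main obstacle: deriving a contradiction.} Total mass loss is not by itself contradictory, so an algebraic input is needed. I would first try the virial computation of Lemma \ref{lem:Wu} applied to the whole unitary block rather than to a single eigenvector. On $W_{\mathrm{u}}\perp1$ the shift $S^*$ is isometric, and $S^*f_j=\lambda_j\eu^{i\tau L_u}f_j$. The commutator identity $[S^*,L_u]=S^*-\langle\cdot,u\rangle S^*u$ then gives, for all $j,k$,
\[
\langle L_uS^*f_j,S^*f_k\rangle=\langle L_uf_j,f_k\rangle-\delta_{jk}+\langle f_j,u\rangle\langle u,f_k\rangle .
\]
Since $u\in W_{\mathrm{u}}$, the rank-one term is $\langle\cdot,u\rangle u$ restricted to $W_{\mathrm{u}}$. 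Two ways of organizing this are:
\begin{itemize}
\item Compare the compressions of $L_u$ to $W_{\mathrm{u}}$ and to $S^*W_{\mathrm{u}}=\eu^{i\tau L_u}W_{\mathrm{u}}$. These are unitarily equivalent, because $\eu^{i\tau L_u}$ commutes with $L_u$. Their traces agree, and so do their spectra.
\item Equivalently, view the identity as saying that $S P_{\mathrm{u}}L_uP_{\mathrm{u}}S^*-P_{\mathrm{u}}L_uP_{\mathrm{u}}$ equals $-\id+\langle\cdot,u\rangle u$ on $W_{\mathrm{u}}$, up to the unitary twist.
\end{itemize}
Comparing the traces of $\id-\langle\cdot,u\rangle u$ (namely $M-\|u\|^2=0$) only reproduces the mass identity. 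The real input should come from comparing spectra, or from combining this relation with the conserved quantity $I_1$, i.e.\ the finiteness of $\langle L_uu,u\rangle$ expressed on $W_{\mathrm{u}}$.

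If this compression argument does not close, the fallback is a Sobolev argument. Use $I_1[u(t)]=I_1[u_0]$ together with $u(t)\weakto0$, and show that the $M$ concentrating bubbles $\beta_j/(1-\omega_jz)$, each of mass tending to $1$, force $\langle Du(t),u(t)\rangle-\|T_{\bar u(t)}u(t)\|^2$ to deviate from its conserved value once the remainder vanishes. This would contradict conservation of $I_1$.
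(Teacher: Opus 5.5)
\textbf{There is a genuine gap: the proposal never reaches a contradiction.} Your reduction to $u_0\notin W_{\mathrm{u}}$ is correct, and so are the consequences you list. In particular, you already have $\langle A_T^n u_0,1\rangle=0$ for all $n\geq0$, since $A_T^n u_0\in W_{\mathrm{u}}\perp 1$.

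You then call this ``not by itself contradictory'' and look elsewhere, but neither route closes.
\begin{itemize}
\item \emph{Compression identity, $M=1$.} This is the essential case, namely minimal mass $\|u_0\|_{L^2}^2=1$. The rank-one term is $|\langle u_0,f_1\rangle|^2=1$, so your relation is exactly the identity from Lemma \ref{lem:Wu} that produced $|\langle u_0,f_1\rangle|=1$. Comparing $1\times1$ compressions is vacuous.
\item \emph{Compression identity, $M\geq2$.} The relation has the form $\Lambda^* G\Lambda=G-\id+ww^*$ with $\Lambda$ diagonal unitary and $|w_j|=1$. This is solvable in general: for $M=2$, $w=(1,1)$, $G_{12}=-\tfrac12$, $\Lambda=\mathrm{diag}(1,-1)$.
\item \emph{$I_1$ fallback.} This is only a heuristic with no evident mechanism. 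An exact unit-mass bubble $\beta/(1-\omega z)$ with $|\beta|^2=1-|\omega|^2$ has $I_1=-1$ for every $\omega\in\D$, so concentration alone does not move $I_1$.
\end{itemize}

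\textbf{The missing idea} is that the vanishing of all $\langle A_T^nu_0,1\rangle$ \emph{is} contradictory. The reason is how $L_{u_0}$ acts on the low frequencies of $u_0$ itself. Note that $\Us(T)f=0$ holds for \emph{every} $f\in W_{\mathrm{u}}$. So any argument treating $u_0$ merely as a vector of $W_{\mathrm{u}}$ with $|\langle u_0,f_j\rangle|=1$ must fail; it has to use that this vector defines $L_{u_0}$.

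The paper's argument runs as follows.
\begin{itemize}
\item Since $u_0\perp 1$, write $u_0=z^m v$ with $m\geq1$ and $v(0)=a_m\neq0$.
\item For $f\in E_m=\mathrm{span}\{1,\dots,z^{m-1}\}$, the product $\bar u_0 f$ has only negative frequencies. Hence $T_{\bar u_0}f=0$ and $L_{u_0}=D$ on $E_m$.
\item By self-adjointness, $E_m$ reduces $L_{u_0}$. Hence $P_m\eu^{-iT\LL}=\eu^{-iT(2D+1)}P_m$, where $P_m$ is the orthogonal projection onto $E_m$.
\item Since $S^*$ lowers each positive frequency by one, induction shows that $A_T^k u_0$ has no Fourier modes below $m-k$. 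Its coefficient at frequency $m-k$ is $a_m$ times a unimodular phase.
\item At $k=m$ this gives $\langle A_T^m u_0,1\rangle=\eu^{-iTm^2}a_m\neq0$, contradicting $A_T^m u_0\in W_{\mathrm{u}}\perp1$.
\end{itemize}
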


\begin{proof}
Let us divide the proof into the following steps.

\medskip
\textbf{Step 1.} Note that 
\be \label{eq:W_u_perp}
W_{\mathrm{u}} \perp 1.
\ee
Indeed, for any $f \in W_{\mathrm{u}}$ and using that $\eu^{-iT \LL}$ is unitary, we deduce that
$$
\| f \|_{L^2}^2 = \| A_T f \|_{L^2}^2 = \| \eu^{-iT \LL} S^* f \|_{L^2}^2 = \| S^* f \|_{L^2}^2 =  \| f \|_{L^2}^2 - |\langle f, 1 \rangle|^2,
$$
which shows \eqref{eq:W_u_perp}.  

We now assume, towards a contradiction, that $u_0 \in W_{\mathrm{u}}$ holds. By \eqref{eq:W_u_perp} and $u_0 \neq 0$, there exists some integer $m \geq 1$ such that
\be \label{eq:u_0_fourier}
 {u_0(z) = \sum_{k \geq m} a_k z^k \quad \mbox{with} \quad a_{m} \neq 0.}
\ee
We now claim that
\be \label{eq:W_u_contra}
\langle A_T^{m} u_0, 1 \rangle \neq 0.
\ee
{On the other hand, since $W_{\mathrm{u}}$ is invariant under $A_T$, we have $A_T^k u_0\in W_{\mathrm{u}}$ for every $k\geq0$, which contradicts \eqref{eq:W_u_perp} in view of \eqref{eq:W_u_contra}.} Therefore, it remains to show that \eqref{eq:W_u_contra} holds.

\medskip
\textbf{Step 2.} For $m \geq 1$ as in \eqref{eq:u_0_fourier}, we introduce the $m$-dimensional subspace
$$
E_m = \mathrm{span} \{ 1, \ldots, z^{m-1} \}.
$$
 Clearly,  the subspace $E_m$ is invariant under $D = -i\pt_x = z \pt_z$. Since $u_0(z)= z^m v(z)$ with some $v \in L^2_+(\T)$ by \eqref{eq:u_0_fourier}, we find 
$$
T_{\bar{u}_0} f = \Pi( \bar{z}^m \bar{v} f) = 0 \quad \mbox{for $f \in E_m$},
$$
because $\bar{z}^m \bar{v} f$ has only strictly negative Fourier modes for $f \in E_m$. Therefore, the subspace $E_m$ is also invariant under $L_{u_0} = D-T_{u_0} T_{\bar{u}_0}$ with 
$$
L_{u_0} f = D f \quad \mbox{for} \quad f \in E_m.
$$ 
{Since $L_{u_0}$ is self-adjoint and preserves smoothness,} we see that $E_m^\perp \cap H^\infty_+(\T)$ is also invariant under $L_{u_0}$. Recalling that $\LL = 2 L_{u_0} + \id$, we hence find
$$
P_m \eu^{-iT \LL} h = \eu^{-iT (2D+1)} P_m h \quad \mbox{for} \quad h \in H^\infty_+(\T),
$$
where $P_m$ denotes the orthogonal projection onto $E_m$.

For $0\leq k\leq m$, let
$$
h_k:=A_T^k u_0.
$$
We claim that
\be \label{eq:lowest_mode}
\widehat{h_k}(n)=0 \quad \mbox{for $0\leq n<m-k$},
\quad
\widehat{h_k}(m-k)
=
\eu^{-iT\sum_{\ell=1}^k(2(m-\ell)+1)}a_m
\neq 0,
\ee
where the sum is understood to be zero when $k=0$. Indeed, the assertion for $k=0$ follows from \eqref{eq:u_0_fourier}. {Suppose that the assertion holds with $k-1$ in place of $k$, where $1\leq k\leq m$.} Since $S^*$ shifts every positive Fourier mode down by one, $S^*h_{k-1}$ has no Fourier modes below $m-k$, and
$$
\widehat{S^*h_{k-1}}(m-k)
=
\widehat{h}_{k-1}(m-k+1).
$$
The identity
$$
P_m\eu^{-iT\LL}h
=
\eu^{-iT(2D+1)}P_mh
$$
then shows that $h_k=\eu^{-iT\LL}S^*h_{k-1}$ has no Fourier modes below $m-k$ and that
$$
\widehat{h}_k(m-k)
=
\eu^{-iT(2(m-k)+1)}
\widehat{h}_{k-1}(m-k+1).
$$
This proves \eqref{eq:lowest_mode} by induction. Taking $k=m$ and using $\sum_{\ell=1}^m(2(m-\ell)+1)=m^2$, we obtain
$$
\langle A_T^m u_0,1\rangle
=
\widehat{h}_m(0)
=
\eu^{-iTm^2}a_m
\neq0.
$$
This proves \eqref{eq:W_u_contra} and complete the proof that $u_0 \not \in W_{\mathrm{u}}$.

Finally, write $u_0 = f + g$ with $f \in W_{\mathrm{u}}$ and $g \in W_{\mathrm{u}}^\perp$. Since $g \neq 0$ and $\| f \|_{L^2}^2 = \sum_{j=1}^M |\langle f_j, u_0 \rangle |^2 = M$, we obtain $\| u_0 \|_{L^2}^2 = M+ \|g \|_{L^2}^2 > M$. This completes the proof of Lemma \ref{lem:no_total_loss}.
\end{proof}

\subsection{Blow-up analysis by analytic perturbation theory} \label{subsec:blowup_analysis}
Recall that $u_0 \in H^\infty_+(\T)$ is a finite-gap potential of the form \eqref{eq:finite_gap} and we assume that the corresponding solution $u \in C([0,T); H^\infty_+(\T))$ blows up at finite time $T >0$. For $\eps \in \R$, we consider the family of contractions on $W$ given by
\be
\AA(\eps) := A_{T-\eps} : W \to W.
\ee
Since $A_t = \eu^{-it \LL} S^*|_W$ and by invariance of $W$ under $\LL$ and $S^*$ together with the group property $\eu^{-i(t+s) \LL} = \eu^{-is \LL} \eu^{-it \LL}$, we can write
\be
\AA(\eps) = \eu^{i \eps \Ls} \AA_0,
\ee
where we set
$$
\Ls := \LL |_W \quad \mbox{and} \quad \AA_0 := A_T.
$$
Since $W$ is finite-dimensional, we see that 
$$
\AA(\eps) = \sum_{n=0}^\infty \frac{(i \eps)^n}{n!} {\Ls^n} \AA_0 = \AA_0 + \sum_{n=1}^\infty \frac{(i \eps)^n}{n!} {\Ls^n} \AA_0
$$
 is an analytic family of operators on the finite-dimensional Hilbert space $W$ and hence standard analytic perturbation theory for matrices can be applied; see, e.\,g.,~\cite{Ka-95}.
  
 Recall from above that $\AA_0 = A_T$  has simple unimodular eigenvalues $\omega_1, \ldots, \omega_M \in \pt \D$ and let  $f_1, \ldots, f_M \in W$ denote a fixed orthonormal system of eigenvectors for $\AA_0$. Thanks to simplicity of these eigenvalues, we can apply standard perturbation theory to construct {analytic branches} of normalized eigenvectors $f_j(\eps) \in W$ and corresponding eigenvalues $\omega_j(\eps)$ for $\AA(\eps)$, which emanate from $f_j(0)=f_j$ and $\omega_j(0) = \omega_j$, {provided that $|\eps|<\eps_0$ for some $\eps_0>0$}. Here is the statement, where $\AA(\eps)^* : W \to W$ denotes the adjoint of $\AA(\eps) : W \to W$.
 
\begin{proposition} \label{prop:perturbation}
For some $\eps_0 > 0$ sufficiently small, the following properties hold.
\begin{enumerate}
\item[(i)] There exist functions $f_j : (-\eps_0, \eps_0) \to W$ and $\omega_j : (-\eps_0, \eps_0) \to \ov{\D}$ for $j=1, \ldots, M$ such that
$$
\AA(\eps) f_j(\eps) = \omega_j(\eps) f_j(\eps) \quad \mbox{with} \quad \| f_j(\eps) \|_{L^2} = 1,
$$
where $f_j(\eps)$ and $\omega_j(\eps)$ are analytic and satisfy 
$$
f_j(0) = f_j \quad \mbox{and} \quad \omega_j(0) = \omega_j \quad \mbox{for $j=1, \ldots, M$}.
$$
\item[(ii)] There exist analytic functions $g_1, \ldots, g_M : (-\eps_0, \eps_0) \to W$ such that
$$
\AA(\eps)^* g_j(\eps) = \ov{\omega}_j(\eps) g_j(\eps) \quad \mbox{with} \quad \langle f_j(\eps), g_j(\eps) \rangle = 1
$$
 and $g_j(0) = f_j$. Correspondingly, the analytic maps $P_j(\eps) : W \to W$ with
 $$
 P_j(\eps) h = \langle h, g_j(\eps) \rangle f_j(\eps)
 $$
 are the Riesz projections onto the one-dimensional eigenspaces $\ker (\AA(\eps) - \omega_j(\eps) \id)$ for $\eps \in (-\eps_0, \eps_0)$.

\item[(iii)] $\AA(\eps)$ has no unimodular eigenvalue for $\eps \in (-\eps_0, \eps_0)$ with $\eps \neq 0$, i.e.,
$$
|\omega_j(\eps)| < 1 \quad \mbox{for $|\eps| < \eps_0$ with $\eps \neq 0$}.
$$
The remaining spectrum of $\AA(\eps)$ is uniformly bounded away from $\pt \D$, i.e.,
$$
 {\sigma(\AA(\eps)) \setminus \{ \omega_1(\eps), \ldots, \omega_M(\eps) \} \subset \{  |z|  \leq \rho \}} 
$$
for $\eps \in (-\eps_0, \eps_0)$ with some constant $0 < \rho < 1$.
\end{enumerate}
\end{proposition}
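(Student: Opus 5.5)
Parts (i) and (ii) come from standard analytic perturbation theory for simple eigenvalues of the analytic matrix family $\AA(\eps)=\eu^{i\eps\Ls}\AA_0$, as in \cite{Ka-95}. Part (iii) then follows from the minimality of $T$ combined with an identity-theorem argument.

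\textbf{Eigenvalue and eigenvector branches.} By Lemma \ref{lem:Wu}, each unimodular eigenvalue $\omega_j$ of $\AA_0$ is algebraically simple. Hence the Riesz projection
$$
P_j(\eps)=\frac{1}{2\pi i}\oint_{|\zeta-\omega_j|=\delta}(\zeta-\AA(\eps))^{-1}\,d\zeta
$$
is analytic in $\eps$ for small $|\eps|$, provided $\delta$ is smaller than the distance from $\omega_j$ to the rest of $\sigma(\AA_0)$. This projection has rank one, and $\omega_j(\eps)=\tr(\AA(\eps)P_j(\eps))$ is analytic. The vector $\tilde f_j(\eps):=P_j(\eps)f_j$ is analytic and nonzero near $0$, since $\tilde f_j(0)=f_j$. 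Because $\|\tilde f_j(\eps)\|^2$ is real-analytic and positive, normalizing gives a real-analytic unit eigenvector $f_j(\eps)=\tilde f_j(\eps)/\|\tilde f_j(\eps)\|$, and $\omega_j(\eps)\in\ov{\D}$ since $\AA(\eps)$ is a contraction.

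\textbf{Adjoint vectors.} The adjoint Riesz projection $P_j(\eps)^*$ projects onto $\ker(\AA(\eps)^*-\ov{\omega}_j(\eps))$. Set $g_j(\eps):=P_j(\eps)^*f_j(\eps)/\ov{\langle f_j(\eps),P_j(\eps)^*f_j(\eps)\rangle}$, so that $\langle f_j(\eps),g_j(\eps)\rangle=1$. This is analytic, because the denominator equals $\|f_j\|^2=1$ at $\eps=0$. At $\eps=0$ we have $\AA_0^*f_j=\ov{\omega}_jf_j$ (shown in the proof of Lemma \ref{lem:blow_up_criterion}), so the left and right eigenvectors coincide and $P_j(0)$ is the orthogonal projection onto $\C f_j$, giving $g_j(0)=f_j$. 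The rank-one formula $P_j(\eps)h=\langle h,g_j(\eps)\rangle f_j(\eps)$ is the usual representation of a rank-one spectral projection through biorthogonal eigenvectors.

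\textbf{Part (iii), first claim.} The remaining eigenvalues of $\AA_0$ lie in a disk $\{|z|\le\rho_0\}$ with $\rho_0<1$. By upper semicontinuity of the spectrum (continuity of roots of the characteristic polynomial), the remaining spectrum of $\AA(\eps)$ stays in $\{|z|\le\rho\}$ for any fixed $\rho\in(\rho_0,1)$ once $\eps_0$ is small. Thus only the branches $\omega_j(\eps)$ can be unimodular. For $\eps\in(0,\eps_0)$ we have $t=T-\eps\in(0,T)$, and Lemma \ref{lem:blow_up_criterion} with $T=\min\{t_*>0:r(A_{t_*})=1\}$ gives $|\omega_j(\eps)|<1$. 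The case $\eps<0$ is the main obstacle, since minimality of $T$ says nothing past the blow-up time. I would argue by analyticity: $\phi_j(\eps)=1-|\omega_j(\eps)|^2$ is real-analytic on $(-\eps_0,\eps_0)$ and is not identically zero, because it is positive for $\eps>0$. Its zeros are therefore isolated, so after shrinking $\eps_0$ the only zero is $\eps=0$. This is legitimate because the statement only asks for some small $\eps_0$.
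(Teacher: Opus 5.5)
Your proposal is correct and follows the paper's route. Parts (i)--(ii) come from standard analytic perturbation theory for the algebraically simple unimodular eigenvalues $\omega_j$. Part (iii) combines three ingredients: minimality of $T$ for $0<\eps<\eps_0$ (with $\eps_0<T$), isolatedness of the zero at $\eps=0$ of the nonnegative real-analytic function $1-|\omega_j(\eps)|^2$ to handle $\eps<0$, and continuity of the remaining roots of the characteristic polynomial. The only difference is that you spell out what the paper leaves to Kato, namely the contour-integral Riesz projection, the trace formula for $\omega_j(\eps)$, and $g_j(\eps)=P_j(\eps)^*f_j(\eps)$. For the latter, the normalizing denominator is in fact identically $1$, since $P_j(\eps)f_j(\eps)=f_j(\eps)$.
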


\begin{remark*}
{\em By `analytic' we mean {\em real analytic}, i.e., the corresponding functions are given by a convergent power series in $\eps \in \R$ {with $|\eps|<\eps_0$ for some $\eps_0>0$}. Actually, some functions below are in fact complex analytic, but we won't need this fact here.}
\end{remark*}

\begin{proof}
{This follows from standard perturbation theory for simple eigenvalues of the analytic family $\AA(\eps)$ on the finite-dimensional space $W$; see, e.g.,~\cite{Ka-95}.} Note also that $|\omega_j(\eps)| \leq 1$ holds, since $\AA(\eps)$ is a contraction for any $\eps$ and hence its eigenvalues must be contained in $\ov{\D}$. This proves (i).

{The proof of (ii) is likewise a direct consequence of standard perturbation theory, with $g_j(\eps)$ denoting the corresponding normalized eigenvector of the adjoint family.}

As for (iii), {after decreasing $\eps_0$ if necessary, we may assume that $\eps_0<T$. If $0<\eps<\eps_0$, then $0<T-\eps<T$, and the solution is smooth at $t=T-\eps$. Hence the blow-up criterion implies that $\AA(\eps)=A_{T-\eps}$ has no unimodular eigenvalue, so $|\omega_j(\eps)|<1$ for $0<\eps<\eps_0$. The nonnegative function $1-|\omega_j(\eps)|^2$ is real analytic and is strictly positive for $0<\eps<\eps_0$. It is therefore not identically zero, and its zero at $\eps=0$ is isolated. After decreasing $\eps_0$ if necessary, this gives $|\omega_j(\eps)|<1$ also for $-\eps_0<\eps<0$.} Also, {all remaining eigenvalues} of $\AA(\eps)$ for $|\eps| < \eps_0$ must belong to $\ov{\D}$, since $\AA(\eps)$ is a contraction. In fact, they are uniformly bounded away from $\pt \D$. Indeed, let $\chi_\eps(z)$ denote the characteristic polynomial of $\AA(\eps)$. Then we have the factorization
$$
\chi_\eps(z) = \prod_{j=1}^M (z- \omega_j(\eps)) \cdot \tilde{\chi}_\eps(z) \quad \mbox{for $|\eps| < \eps_0$}, 
$$
where the zeros of $\tilde{\chi}_{\eps=0}(z)$ must be located in $\D$, since all unimodular eigenvalues of $\AA_0$ are $\{ \omega_1, \ldots, \omega_M \}$. By continuity of zeros of $\chi_\eps(z)$ with respect to $\eps$, it follows that all the {zeros} of $\tilde{\chi}_\eps(z)$ also lie inside $\D$ for $|\eps| < \eps_0$ sufficiently small. Thus there exists some constant $\rho < 1$ as stated in (iii). 
\end{proof}

As a next step, we apply the results from Proposition \ref{prop:perturbation} to study the explicit formula as $t \to T$ corresponding to the limit $\eps \to 0$ above. In what follows, we always assume that $|\eps| < \eps_0$ with the constant $\eps_0>0$ from above. 

We define the projection
$$
P(\eps) := \sum_{j=1}^M P_j(\eps) :W \to W
$$
onto the subspace spanned by the vectors $f_1(\eps), \ldots, f_M (\eps) \in W$, where we note that $P_j(\eps)=P_j(\eps)^2$ and $P_j(\eps) P_k(\eps) = 0$ for $j \neq k$. Thus $P(\eps)$ is also a projection on $W$, {but it is not necessarily orthogonal when $\eps\neq0$}.

For a fixed vector $v \in W$, we define
\be \label{def:v}
v(\eps,z) := \left \langle (\id-z \AA(\eps))^{-1} P(\eps) v, 1 \right \rangle \quad \mbox{for $z \in \D$ and $|\eps| < \eps_0$}.
\ee
Since $\AA(\eps)$ simply acts as $\mathrm{diag}(\omega_1(\eps), \ldots, \omega_M(\eps))$ on $\ran \, P(\eps)$, we obtain
\be \label{eq:v_bubbles}
\boxed{v(\eps,z) = \sum_{j=1}^M \frac{\beta_j(\eps)}{1- \omega_j(\eps) z} \quad \mbox{with} \quad \beta_j(\eps) := \langle P_j(\eps) v, 1 \rangle}{.}
\ee
Since $|\omega_j(\eps)| < 1$ for $\eps \neq 0$, we see that $v(\eps,z)$ is a rational function of $z$ with poles in the complement of the closed unit disk $\ov{\D}$, and thus $v(\eps) \in H^\infty_+(\T)$ for $\eps \neq 0$. Also, we remark that $v(0,z) = 0$ for all $z \in \D$, since $\beta_j(0)= \langle P_j(0)v, 1 \rangle = 0$ because $f_j \perp 1$ for all $j=1, \ldots, M$.

 The next result explicitly determines the blow-up behavior of $\| v(\eps) \|_{H^s}$ as $\eps \to 0$.

\begin{lemma}[Blow-up rates] \label{lem:blowup_unitarypart}
Let $v \in W$ be given and consider $v(\eps)$ as given by \eqref{def:v}. For all $1 \leq j \leq M$, we have
$$
|\beta_j(\eps)|^2 = \left ( |\langle v, f_j \rangle |^2 + \O(\eps) \right ) \cdot \left (1-|\omega_j(\eps)|^2 \right )
$$
$$
 1-|\omega_j(\eps)|^2 = c_j \eps^{2 \nu_j} + \O(\eps^{2 \nu_j + 1} ),
$$
with some constants $c_j > 0$ and integers $\nu_j \geq 1$. 

Furthermore, it holds that
$$
{\| v(\eps) \|_{H^s}^2 \sim_{s,u_0,v} \sum_{j=1}^M |\langle v, f_j \rangle |^2 |\eps|^{-4s \nu_j} \quad \mbox{for any $s > 0$ as $\eps \to 0$}},
$$
provided that $\langle v, f_j \rangle \neq 0$ for $j=1, \ldots, M$.
\end{lemma}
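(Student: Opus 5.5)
The plan is to first prove an \emph{exact} identity linking $|\langle f_j(\eps),1\rangle|$ to $|\omega_j(\eps)|$, then expand the Riesz projection, and finally compute the Sobolev norm directly from Fourier coefficients.

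\textbf{Step 1 (exact identity for $|\omega_j(\eps)|$).} By Proposition \ref{prop:perturbation}(i), $\AA(\eps) f_j(\eps) = \omega_j(\eps) f_j(\eps)$ with $\|f_j(\eps)\|_{L^2}=1$. Since $\AA(\eps)=\eu^{-i(T-\eps)\LL}S^*|_W$ and $\eu^{-i(T-\eps)\LL}$ is unitary, the computation from Step 1 of Lemma \ref{lem:Wu} gives
$$
|\omega_j(\eps)|^2 = \|S^* f_j(\eps)\|_{L^2}^2 = 1 - |\langle f_j(\eps),1\rangle|^2 .
$$
Hence $1-|\omega_j(\eps)|^2 = |\langle f_j(\eps),1\rangle|^2$ exactly.

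\textbf{Step 2 (the coefficients $\beta_j$).} From Proposition \ref{prop:perturbation}(ii),
$$
\beta_j(\eps) = \langle P_j(\eps)v,1\rangle = \langle v, g_j(\eps)\rangle \,\langle f_j(\eps),1\rangle .
$$
By analyticity and $g_j(0)=f_j$, we have $\langle v,g_j(\eps)\rangle = \langle v,f_j\rangle + \O(\eps)$. Therefore
$$
|\beta_j(\eps)|^2 = \left(|\langle v,f_j\rangle|^2+\O(\eps)\right)\left(1-|\omega_j(\eps)|^2\right).
$$

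\textbf{Step 3 (even order of vanishing).} The function $\eps\mapsto 1-|\omega_j(\eps)|^2 = |\langle f_j(\eps),1\rangle|^2$ is real analytic and nonnegative. It vanishes at $\eps=0$ because $f_j\perp 1$. By Proposition \ref{prop:perturbation}(iii) it is strictly positive for $\eps\neq 0$, so it is not identically zero. A nonnegative real-analytic function with an isolated zero has a leading Taylor term of even order with positive coefficient. This gives $c_j\eps^{2\nu_j}+\O(\eps^{2\nu_j+1})$ with $c_j>0$ and $\nu_j\ge 1$. More concretely, write $\langle f_j(\eps),1\rangle = \gamma_j \eps^{\nu_j}+\O(\eps^{\nu_j+1})$ with $\gamma_j\ne 0$, and set $c_j=|\gamma_j|^2$.

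\textbf{Step 4 (Sobolev asymptotics).} By \eqref{eq:v_bubbles}, the Fourier coefficients of $v(\eps)$ are $\widehat{v(\eps)}(n)=\sum_j \beta_j(\eps)\omega_j(\eps)^n$ for $n\ge 0$. Expanding the square gives
$$
\|v(\eps)\|_{H^s}^2 \simeq \sum_{j,k}\beta_j\bar\beta_k \sum_{n\ge0}(1+n)^{2s}\bigl(\omega_j\bar\omega_k\bigr)^n .
$$
\emph{Diagonal terms.} The standard asymptotics $\sum_n (1+n)^{2s} r^n \sim \Gamma(2s+1)(1-r)^{-1-2s}$ as $r\nearrow 1$, applied with $r=|\omega_j|^2$, together with Step 2, give
$$
|\beta_j|^2\,(1-|\omega_j|^2)^{-1-2s} \sim |\langle v,f_j\rangle|^2\, c_j^{-2s}\,|\eps|^{-4s\nu_j}.
$$
\emph{Off-diagonal terms.} Here $q=\omega_j\bar\omega_k\in\ov{\D}$ stays near $\omega_{j}\bar\omega_{k}\ne 1$, using the distinctness from Lemma \ref{lem:simple}. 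I would argue that $\sum_n (1+n)^{2s}q^n$ is uniformly bounded for such $q$ with $|q|<1$. The cleanest route is to view it as a polylogarithm-type function $\mathrm{Li}_{-2s}$, which extends continuously to $\ov{\D}\setminus\{1\}$. Alternatively, one can apply repeated Abel summation by parts, with $|1-q|\gtrsim 1$ controlling each step after writing $(1+n)^{2s}$ via finite differences. Either way, the cross terms are $\O(|\beta_j||\beta_k|)=o(1)$. This is negligible against the diagonal terms, which blow up when $\langle v,f_j\rangle\neq0$.

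The main obstacle is making this off-diagonal bound rigorous uniformly in $\eps$ for non-integer $s$. I would first try summation by parts $\lceil 2s\rceil+1$ times, which reduces the sum to absolutely summable finite differences of $(1+n)^{2s}$ divided by powers of $(1-q)$. Combining the diagonal asymptotics with the bounded cross terms then yields the claimed two-sided equivalence $\|v(\eps)\|_{H^s}^2\sim\sum_j|\langle v,f_j\rangle|^2|\eps|^{-4s\nu_j}$.
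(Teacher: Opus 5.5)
Your proposal is correct and follows the paper's proof essentially step for step. The shared ingredients are the exact identity $1-|\omega_j(\eps)|^2=|\langle f_j(\eps),1\rangle|^2$, the factorization $\beta_j(\eps)=\langle v,g_j(\eps)\rangle\langle f_j(\eps),1\rangle$, even-order vanishing of a nonnegative real-analytic function, polylogarithm asymptotics for the diagonal terms, and uniform boundedness of $\mathrm{Li}_{-2s}$ away from $q=1$ for the cross terms. Your variations are harmless: you use the inhomogeneous weight $(1+n)^{2s}$ instead of splitting off the zero mode, and your summation-by-parts alternative does work with $\lceil 2s\rceil+1$ steps, since for integer $2s$ the $(2s+1)$-st differences of $(1+n)^{2s}$ vanish identically.
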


\begin{remark*}
{\em Below we will take $v=u_0$ and the non-vanishing conditions $\langle v, f_j \rangle \neq 0$ for $j=1, \ldots, M$ will be automatically satisfied thanks to Lemma \ref{lem:Wu}.}
\end{remark*}
\begin{proof}
We divide the proof into the following steps.

\medskip
{\bf Step 1.} Fix $1 \leq j \leq M$. We first claim that
\be \label{eq:defect_f_j}
|\langle f_j(\eps), 1 \rangle |^2 = 1- |\omega_j(\eps)|^2.
\ee
Indeed, we note that $\AA(\eps) f_j(\eps) = {\omega_j(\eps) f_j(\eps)}$ means that $\eu^{-it \LL} S^* f_j(\eps) = \omega_j(\eps) f_j(\eps)$ with $t=T-\eps$. Since $\eu^{-it \LL}$ is unitary and $\|f_j(\eps) \|_{L^2}^2 = 1$, we deduce
$$
\| \AA(\eps) f_j(\eps) \|_{L^2}^2 = \| S^* f_j(\eps) \|_{L^2}^2 = 1- |\langle f_j(\eps), 1 \rangle |^2{.}
$$
On the other hand, taking the $L^2$-norm in the eigenvalue equation {yields}
$$
{\| \AA(\eps) f_j(\eps ) \|_{L^2}^2 = |\omega_j(\eps)|^2}{,}
$$
{since} $\| f_j(\eps) \|_{L^2}^2 = 1$, which proves \eqref{eq:defect_f_j}. 

Next, we recall that $P_j(\eps) v = \langle v, g_j(\eps) \rangle f_j(\eps)$. Since $g_j(\eps) = f_j + \O(\eps)$ and $\| f_j(\eps) \|_{L^2}^2 = 1$, we can use \eqref{eq:defect_f_j} to deduce
$$
|\beta_j(\eps)|^2 = |\langle  v, g_j(\eps) \rangle|^2 \cdot |\langle {f}_j(\eps), 1 \rangle|^2 = \left ( |\langle v, f_j \rangle|^2 + \O(\eps) \right ) \cdot (1- |\omega_j(\eps)|^2 ).
$$

Next, we consider the analytic function $\rho_j(\eps) := 1 - |\omega_j(\eps)|^2 \geq 0$. Since $\rho_j(\eps) > 0$ for $\eps \neq 0$ and $\rho_j(0)=0$, we infer
$$
1-|\omega_j(\eps)|^2 = \rho_j(\eps)= c_j \eps^{2\nu_j} + \O(\eps^{2\nu_j+1})
$$ 
with some constant $c_j > 0$ and some integer $\nu_j \geq 1$.

\medskip
{{\bf Step 2.} Let $s>0$ be given and take $\eps\in(-\eps_0,\eps_0)$ with $\eps\neq0$.} We study the blow-up behavior of $\| v(\eps) \|_{H^s}$ with
$$
v(\eps, z) = \sum_{j=1}^M v_j(\eps, z) = \sum_{j=1}^M \frac{\beta_j(\eps)}{1 - \omega_j(\eps) z} \quad \mbox{as} \quad \eps \to 0.
$$
For the 0-th Fourier mode, we find
 $$
{\widehat{v(\eps)}(0) = v(\eps, 0) = \sum_{j=1}^M \beta_j(\eps) \to 0 \quad \mbox{as $\eps \to 0$}.}
 $$
By the equivalence $\| v(\eps) \|_{H^s}^2 \sim {|\widehat{v(\eps)}(0)|^2} + \| v(\eps) \|_{\dot{H}^s}^2$, it suffices to consider the homogeneous $\dot{H}^s$-norm, i.e.,
$$
 \| v(\eps) \|_{\dot{H}^s}^2  = \big \langle \sum_{j=1}^M v_j(\eps), \sum_{\ell=1}^M v_\ell(\eps) \big \rangle_{\dot{H}^s} = \sum_{j=1}^M \| v_j(\eps) \|_{\dot{H}^s}^2 + {\sum_{\substack{1\leq j,\ell\leq M\\j\neq\ell}} \langle v_j(\eps), v_\ell(\eps) \rangle_{\dot{H}^s}}{.}
 $$
 Here, for elements $g(z) = \sum_{n\geq 0} {g_n} z^n$ and $h(z) = \sum_{n \geq 0} h_n z^n$ in $H^s_+(\T)$, we have $\langle  g,h \rangle_{\dot{H}^s} = \sum_{n \geq 1} n^{2s} g_n \bar{h}_n$. Now, by the geometric series,
$$
v_j(\eps,z) = \frac{\beta_j(\eps)}{1-\omega_j(\eps) z} = \beta_j(\eps) \sum_{n \geq 0} \omega_j(\eps)^n z^n \quad \mbox{for $z \in \D$},
$$
whence it follows that
$$
\| v_j(\eps) \|_{\dot{H}^s}^2 = |\beta_j(\eps)|^2 \sum_{n = 1}^\infty n^{2s} |\omega_j(\eps)|^{2n} = |\beta_j(\eps)|^2 \cdot \mathrm{Li}_{-2s}(|\omega_j(\eps)|^2)
$$
where $\mathrm{Li}_{\alpha}(\cdot)$ denotes the polylogarithm of order $\alpha$. Since $|\omega_j(\eps)| \to 1$ as $\eps \to 0$, a well-known asymptotic estimate for $\mathrm{Li}_{-2s}(q)$ as $q \to 1$ yields
$$
\| v_j(\eps) \|_{\dot{H}^s}^2  = |\beta_j(\eps)|^2 \cdot \mathrm{Li}_{-2s}(|\omega_j(\eps)|^2) \sim \Gamma(2s+1) |\beta_j(\eps)|^2 \rho_j(\eps)^{-(2s+1)}
$$
with the functions $\rho_j(\eps) = 1-|\omega_j(\eps)|^2$ introduced above. Let us now assume that
$$
\langle v,f_j \rangle \neq 0 \quad \mbox{for} \quad j=1, \ldots, M.
$$
This implies $|\beta_j(\eps)|^2 \sim |\langle v, f_j \rangle|^2 \rho_j(\eps)$ from the relation shown in Step 1 above, and we infer
$$
\| v_j (\eps) \|_{\dot{H}^s}^2 \sim_s |\langle v, f_j \rangle|^2 \rho_j(\eps)^{-2s} \quad \mbox{as $\eps \to 0$}{.}
$$
Since $\rho_j(\eps) = 1-|\omega_j(\eps)|^2 \sim \eps^{2\nu_j}$ as $\eps \to 0$ with some integer $\nu_j \geq 1$ by Step 1, we finally conclude
\be 
\| v_j(\eps) \|_{\dot{H}^s}^2 \sim_s |\langle v,f_j \rangle|^2 \cdot {|\eps|^{-4s\nu _j}} \quad \mbox{as} \quad \eps \to 0{.}
\ee

{
Next, we claim that the cross terms satisfy
\be \label{eq:cross}
\left|
\langle v_j(\eps),v_\ell(\eps)\rangle_{\dot H^s}
\right|
=
\O\bigl(|\eps|^{\nu_j+\nu_\ell}\bigr)
=
\O(|\eps|^2)
\quad \mbox{for $j\neq\ell$}.
\ee
Indeed, by the geometric-series representation above,
\begin{align*}
\langle v_j(\eps),v_\ell(\eps)\rangle_{\dot H^s}
&=
\beta_j(\eps)\overline{\beta_\ell(\eps)}
\sum_{n=1}^{\infty}
n^{2s}
\bigl(\omega_j(\eps)\overline{\omega_\ell(\eps)}\bigr)^n \\
&=
\beta_j(\eps)\overline{\beta_\ell(\eps)}
\operatorname{Li}_{-2s}
\bigl(\omega_j(\eps)\overline{\omega_\ell(\eps)}\bigr).
\end{align*}
Set
$$
q_{j\ell}(\eps)
:=
\omega_j(\eps)\overline{\omega_\ell(\eps)}.
$$
Since
$$
q_{j\ell}(\eps)
\longrightarrow
\omega_j\overline{\omega_\ell}\neq1
\quad \mbox{as $\eps\to0$}
$$
for $j\neq\ell$, after decreasing $\eps_0$ if necessary there exists $c>0$ such that
$$
|q_{j\ell}(\eps)-1|\geq c
$$
for all $|\eps|<\eps_0$ and all $j\neq\ell$. The function $\operatorname{Li}_{-2s}$, initially defined on $\D$, extends analytically across every compact subset of $\pt\D\setminus\{1\}$. Consequently,
$$
\sup_{\substack{|\eps|<\eps_0\\j\neq\ell}}
\left|
\operatorname{Li}_{-2s}(q_{j\ell}(\eps))
\right|
<\infty.
$$
Moreover, Step~1 gives $|\beta_j(\eps)|=\O(|\eps|^{\nu_j})$. Hence \eqref{eq:cross} follows.

Since there are only finitely many cross terms, we conclude that
\begin{align*}
\|v(\eps)\|_{\dot H^s}^2
&=
\sum_{j=1}^M
\|v_j(\eps)\|_{\dot H^s}^2
+\O(|\eps|^2) \\
&\sim_{s,u_0,v}
\sum_{j=1}^M
|\langle v,f_j\rangle|^2
|\eps|^{-4s\nu_j}
\quad \mbox{as $\eps\to0$},
\end{align*}
provided that $\langle v,f_j\rangle\neq0$ for every $j=1,\ldots,M$. Together with $\widehat{v(\eps)}(0)\to0$, this yields the asserted estimate for the $H^s$-norm and completes the proof of Lemma \ref{lem:blowup_unitarypart}.
}
\end{proof}

Next, we consider the projection complementary to $P(\eps)$, i.e., we set
$$
P_{\mathrm{rem}}(\eps) :={\id - P(\eps)} : W \to W{.}
$$
For a given vector $w \in W$, we define
$$
\boxed{w(\eps, z) = \langle ( \id - z \AA(\eps))^{-1} P_{\mathrm{rem}}(\eps) w, 1 \rangle }{.}
$$

\begin{lemma} \label{lem:control_nonunitarypart}
There exists some $R > 1$ such that the following holds. For all $|\eps| < \eps_0$, the functions $w(\eps,z)$ are rational in $z$ and holomorphic in $\D_R = \{ |z| < R \}$. Moreover, we have 
$$
\mbox{$w(\eps) \to w(0)$ in $H^\infty_+(\T)$ as $\eps \to 0$}.
$$
\end{lemma}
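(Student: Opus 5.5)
The plan is to treat $w(\eps,z)$ through the restriction of $\AA(\eps)$ to the range of $P_{\mathrm{rem}}(\eps)$, where the spectrum stays strictly inside the disk. Since $P_{\mathrm{rem}}(\eps) = \id - P(\eps)$ is the Riesz projection complementary to the one onto the eigenvalues $\omega_1(\eps), \ldots, \omega_M(\eps)$, it commutes with $\AA(\eps)$ and is analytic in $\eps$. By Proposition \ref{prop:perturbation}(iii), the spectrum of $\AA(\eps)$ restricted to $\ran P_{\mathrm{rem}}(\eps)$ lies in $\{|z| \leq \rho\}$ uniformly for $|\eps| < \eps_0$. I would fix $R$ with $1 < R < \rho^{-1}$, say $R = \frac{1}{2}(1+\rho^{-1})$. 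For $|z| \leq R' := \frac12(R + \rho^{-1})$ we have $|z|\rho < 1$ uniformly, so the operator $\id - z\AA(\eps)$ is invertible on $\ran P_{\mathrm{rem}}(\eps)$ for $|z| \le R'$. Hence
$$
w(\eps,z) = \big\langle (\id - z\AA(\eps) P_{\mathrm{rem}}(\eps))^{-1} P_{\mathrm{rem}}(\eps) w, 1 \big\rangle,
$$
where on the complementary range $\ran P(\eps)$ the operator $\AA(\eps)P_{\mathrm{rem}}(\eps)$ vanishes and the inverse is the identity, so it acts trivially on $P_{\mathrm{rem}}(\eps)w$.

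Rationality in $z$ follows from Cramer's rule on the finite-dimensional space $W$. Writing $B(\eps) := \AA(\eps) P_{\mathrm{rem}}(\eps)$, the numerator and denominator $\det(\id - zB(\eps))$ are polynomials in $z$, and the denominator vanishes only at $z = 1/\lambda$ with $\lambda \in \sigma(B(\eps))\setminus\{0\} \subset \{|\lambda| \le \rho\}$. Therefore $w(\eps,\cdot)$ is holomorphic on $\{|z| < \rho^{-1}\} \supset \ov{\D_{R'}}$.

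For the convergence, I would show that $(\eps,z) \mapsto (\id - zB(\eps))^{-1}$ is jointly continuous, indeed analytic, on $(-\eps_0,\eps_0)\times \ov{\D_{R'}}$. This holds because $B(\eps)$ is analytic in $\eps$ and $\det(\id - zB(\eps))$ is bounded away from zero there. I would first shrink $\eps_0$ slightly so that the relevant set is compact. Consequently $\sup_{|z| \le R'} |w(\eps,z) - w(0,z)| \to 0$ as $\eps \to 0$. Writing $w(\eps,z) = \sum_n \widehat{w}_n(\eps) z^n$, Cauchy's estimates then give $|\widehat{w}_n(\eps) - \widehat{w}_n(0)| \le R'^{-n} \sup_{|z|=R'}|w(\eps,z)-w(0,z)|$. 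Since $R' > 1$, we get $\|w(\eps)-w(0)\|_{H^s}^2 \le \sum_n (1+n)^{2s} R'^{-2n} \cdot o(1) \to 0$ for every $s$, which is convergence in $H^\infty_+(\T)$.

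The main obstacle is securing the uniform bound on the spectrum of the remainder part. This comes from Proposition \ref{prop:perturbation}(iii), with the caveat that $\sigma(B(\eps))$ also contains $0$, which is harmless, together with the joint continuity of the inverse; both are routine in finite dimensions.
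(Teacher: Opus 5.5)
Your proposal is correct and follows essentially the same route as the paper. Both arguments restrict to $\BB(\eps)=\AA(\eps)P_{\mathrm{rem}}(\eps)$, get rationality and holomorphy on $\D_R$ with $1<R<1/\rho$ from Cramer's rule and Proposition~\ref{prop:perturbation}(iii), and conclude via uniform convergence on a circle of radius greater than $1$ plus Cauchy estimates. The only cosmetic difference is that you obtain the uniform convergence from joint continuity of $(\eps,z)\mapsto(\id-z\BB(\eps))^{-1}$ on a compact set, whereas the paper uses a uniform resolvent bound together with the resolvent identity.
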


\begin{proof}
Assume that $|\eps| < \eps_0$ holds. Since the Riesz projections $P_j(\eps)$ commute with $\AA(\eps)$, so does the projection $P_{\mathrm{rem}}(\eps)$ commute with $\AA(\eps)$. {Hence $\ran P_{\mathrm{rem}}(\eps)$ is invariant under $\AA(\eps)$, and we define the following operator on $W$:}
$$
{\BB(\eps):=\AA(\eps)P_{\mathrm{rem}}(\eps):W\longrightarrow W.}
$$
Using that $P_{\mathrm{rem}}(\eps)^2 = P_{\mathrm{rem}}(\eps)$ and $\AA(\eps) P_{\mathrm{rem}}(\eps) = \BB(\eps)$, we find
$$
w(\eps,z) = \langle (\id - z \AA(\eps))^{-1} P_{\mathrm{rem}}(\eps) w, 1 \rangle = \langle (\id- z\BB(\eps))^{-1} P_{\mathrm{rem}}(\eps) w, 1 \rangle.
$$
{By expressing the inverse of a matrix through its adjugate,} we deduce
$$
w(\eps,z) = \frac{\langle \mathrm{adj} (\id-z \BB(\eps)) P_{\mathrm{rem}}(\eps) w,1 \rangle}{\det(\id- z \BB(\eps))} = \frac{p_\eps(z)}{q_\eps(z)}
$$
with some polynomials $p_\eps(z)$ and $q_\eps(z)$. By Proposition \ref{prop:perturbation} (iii), the spectrum $\sigma(\BB(\eps))$ belongs to $\{ |z| \leq \rho \}$  for all $|\eps| < \eps_0$ with some constant $0 < \rho < 1$. Thus the polynomials $q_\eps(z)$ have all their zeros in $\{ |z| \geq 1/\rho \}$. Thus by taking $1 < R < 1/\rho$, we deduce that $w(\eps, z)$ is rational in $z$ and holomorphic in $\D_R= \{ |z| < R \}$ for $|\eps| < \eps_0$.

It remains to show that
\be \label{eq:w_conv}
\mbox{$w(\eps) \to w(0)$ in $H^\infty_+(\T)$ as $\eps \to 0$}.
\ee
To see this, we first note that $\BB(\eps) = \BB(\eps) P_{\mathrm{rem}}(\eps)$ is continuous (in operator norm on the finite-dimensional space $W$). Thus, after possibly decreasing $\eps_0 > 0$ and $R > 1$, we obtain the uniform resolvent bound
\be \label{ineq:resolvent}
{C_R := \sup_{|\eps| < \eps_0} \sup_{|z| \leq R} \left \| (\id-z \BB(\eps))^{-1} \right \| < \infty.}
\ee
Using the resolvent identity
$$
(\id - z \BB(\eps))^{-1} - (\id - z \BB(0))^{-1} = (\id - z\BB(\eps))^{-1} z(\BB(\eps)-\BB(0)) (\id - z \BB(0))^{-1}
$$
together with \eqref{ineq:resolvent} and that $\BB(\eps) \to \BB(0)$ and $P_{\mathrm{rem}}(\eps) \to P_{\mathrm{rem}}(0)$ strongly, we obtain
\be \label{eq:uniform}
\sup_{|z| \leq R} |w(\eps,z)- w(0,z)| \to 0 \quad \mbox{as} \quad \eps \to 0.
\ee
In particular, $w(\eps,z) \to w(0,z)$ uniformly on every circle $|z|=r$ with $1 < r < R$.

{Fix such an $r$ with $1<r<R$.} Write the series expansions
$$
w(\eps,z) = \sum_{n \geq 0} c_n(\eps) z^n, \quad w(0,z) = \sum_{n \geq 0} c_n(0) z^n.
$$
By Cauchy's integral formula, 
\be \label{ineq:c_n}
c_n(\eps) - c_n(0) = \frac{1}{2 \pi i} \int_{|\zeta|=r} \frac{w(\eps,\zeta)-w(0, \zeta)}{\zeta^{n+1}} \, d \zeta.
\ee
Therefore, we obtain the estimate
$$
|c_n(\eps)-c_n(0)| \leq r^{-n} \delta_\eps \quad \mbox{with} \quad \delta_\eps:= \sup_{|\zeta|=r} |w(\eps,\zeta)-w(0,\zeta)|.
$$
Note that, by \eqref{eq:uniform}, we have $\delta_\eps \to 0$ as $\eps \to 0$. Let $s >0$ be given. From \eqref{ineq:c_n} we get
$$
\| w(\eps) - w(0) \|_{H^s}^2 = \sum_{n \geq 0} (1+n^2)^s |c_n(\eps)-c_n(0)|^2 \leq \delta_\eps^2 \sum_{n \geq 0} (1+n^2)^s r^{-2n}.
$$
Since $r > 1$, {the series on the right-hand side converges}. Hence 
$$
\|w(\eps)- w(0) \|_{H^s} \leq C_{s,r} \delta_\eps \to 0 \quad \mbox{as} \quad \eps \to 0.
$$
Because $s > 0$ is arbitrary, this proves \eqref{eq:w_conv}.

The proof of Lemma \ref{lem:control_nonunitarypart} is now complete.
\end{proof}

\subsection{Proof of Theorem \ref{thm:universal}}
{
Let $u_0\in H^\infty_+(\T)$ be a finite-gap potential. Suppose that the corresponding solution $u\in C([0,T);H^\infty_+(\T))$ of \eqref{eq:CS} with $u(0)=u_0$ blows up at some finite time $0<T<\infty$. Let
$$
W=(z\psi_{u_0}L^2_+)^\perp
$$
be the corresponding finite-dimensional model space. Recall that $u_0\in W$ and that $W$ is invariant under $L_{u_0}$ and $S^*$. Following the previous subsection, we consider
$$
\AA(\eps)
=
\eu^{-i(T-\eps)\LL}S^*|_W
=
\eu^{i\eps\Ls}\AA_0,
\qquad
\AA_0=A_T,
\qquad
\eps\in\R,
$$
where $\LL=2L_{u_0}+\id$ and $\Ls=\LL|_W$. By the blow-up criterion, the contraction
$$
\AA_0=A_T=\eu^{-iT\LL}S^*|_W
$$
has a non-trivial unitary subspace. By Lemmas \ref{lem:simple} and \ref{lem:no_total_loss}, we have
$$
W_{\mathrm{u}}
=
\operatorname{span}\{f_1,\ldots,f_M\},
\qquad
1\leq M < \|u_0\|_{L^2}^2,
$$
where $f_1,\ldots,f_M$ form an orthonormal set of eigenvectors of $\AA_0$ corresponding to the simple unimodular eigenvalues $\omega_1,\ldots,\omega_M$.
}

Next, we suppose $\eps > 0$ with $\eps < \eps_0$, where $\eps_0 > 0$ is the small constant given by Proposition \ref{prop:perturbation}. Using the explicit formula for $u(t)=\Us(t) u_0$ with $t= T-\eps$ and $u_0 \in W$, we can write
$$
\boxed{u(T-\eps) =\Us(T-\eps) P(\eps) u_0 + \Us(T-\eps) P_{\mathrm{rem}}(\eps)u_0 \quad \mbox{for $\eps \in (0, \eps_0)$}}{.}
$$ 
Here we recall that
$$
P(\eps) = \sum_{j=1}^M P_j(\eps) \quad \mbox{and} \quad {P_{\mathrm{rem}}(\eps) = \id - P(\eps)}
$$
denote the projections introduced in the previous subsection. Note that, by the definition of $\AA(\eps)$, the explicit formula reduces to $W$, i.e.,
$$
(\Us(T-\eps)f)(z) = \langle (\id - z \AA(\eps))^{-1} f, 1 \rangle \quad \mbox{for any $f \in W$ and $z \in \D$}.
$$
We now split the discussion for $f = P(\eps) u_0 \in W$ and $f = P_{\mathrm{rem}}(\eps) u_0 \in W$ into the following steps.

\medskip
{\bf Step 1.} From \eqref{def:v} and \eqref{eq:v_bubbles} together with Lemma \ref{lem:blowup_unitarypart}, we deduce
$$
v(\eps,z):=(\Us(T-\eps) P(\eps) u_0)(z) = \sum_{j=1}^M \frac{\beta_j(\eps)}{1-\omega_j(\eps) z} \quad \mbox{for $z \in \D$ and $\eps \in (-\eps_0, \eps_0)$},
$$
with the expansions
$$
|\beta_j(\eps)|^2 = \left ( |\langle u_0, f_j \rangle |^2 + \O(\eps) \right ) \cdot \left (1-|\omega_j(\eps)|^2 \right ), 
$$
$$
 |\omega_j(\eps)|^2 = 1- c_j \eps^{2 \nu_j} + \O(\eps^{2\nu_j + 1} ),
$$
with some constants $c_j > 0$ and integers $\nu_j \geq 1$. By Lemma \ref{lem:Wu}, we have 
\be \label{eq:Wu_simple}
|\langle u_0, f_j\rangle | = 1 \quad \mbox{for} \quad 1 \leq j \leq M. 
\ee
{Upon setting $\beta_j(t):=\beta_j(T-t)$ and $\omega_j(t):=\omega_j(T-t)$, we obtain analytic functions on $(T-\eps_0,T+\eps_0)$ with the expansions stated in Theorem \ref{thm:universal}.} Furthermore, by Lemma \ref{lem:blowup_unitarypart} and \eqref{eq:Wu_simple}, we obtain the precise blow-up rate
$$
\|v (\eps) \|_{H^s}^2 {\sim_{s,u_0}} \sum_{j=1}^M \eps^{-4s\nu_j} {\sim_{s,u_0}} \eps^{-4s\nu}{\quad \mbox{as $\eps\to0^+$}}
$$
for any $s > 0$ with the integer $\nu:= \max_{1 \leq j \leq M} \nu_j$.

\medskip
{\bf Step 2.} We next consider
$$
w(\eps,z) := (\Us(T-\eps) P_{\mathrm{rem}}(\eps) u_0)(z)= \langle (\id - z \AA(\eps))^{-1} P_{\mathrm{rem}}(\eps) u_0, 1 \rangle{.}
$$
From Lemma \ref{lem:control_nonunitarypart} we deduce that 
$$
w(\eps) \to w(0) \quad \mbox{in $H^\infty_+(\T)$ as $\eps \to 0$}{.}
$$
As a consequence, {setting $\eps=T-t$,} we find that the solution $u \in C([0,T); H^\infty_+(\T))$ can be written as
$$
u(t,z) = v(T-t, z) + w(T-t,z) = \sum_{j=1}^M \frac{\beta_j(t)}{1-\omega_j(t) z} + w(T-t, z)
$$
with the functions $\beta_1(t), \ldots, \beta_M(t)$ and $\omega_1(t), \ldots, \omega_M(t)$ as stated in Theorem \ref{thm:universal}, where 
$$
w(T-t) \to u_* := w(0) \quad \mbox{in $H^\infty_+(\T)$ as $t \nearrow T$},
$$
$$
{\|u(t)\|_{H^s}=(1+o(1))\|v(T-t)\|_{H^s}\sim_{s,u_0}(T-t)^{-2s\nu}\quad \mbox{for any $s>0$ as $t\nearrow T$},}
$$
with some integer $\nu \geq 1$. 

Next, we claim that the loss of $L^2$-mass at blow-up is equal to the integer $M \geq 1$, i.e., 
\be \label{eq:L2_quant}
 \| u_* \|_{L^2}^2 = \| u_0 \|_{L^2}^2 - M.
\ee
To see this, we observe that
$$
{u_*(z) = \langle (\id-z \AA_0)^{-1} P_{\mathrm{rem}}(0) u_0, 1 \rangle \quad \mbox{for $z \in \D$}.}
$$
Since $\AA_0^n f \to 0$ in $L^2_+$ for any $f \in \ran \, P_{\mathrm{rem}}(0)$ (because the spectrum of $\AA_0 |_{\ran \, P_{\mathrm{rem}}(0)}$ lies inside $\D$), we deduce that 
$$
\|u_* \|_{L^2}^2 = \| P_{\mathrm{rem}}(0) u_0 \|_{L^2}^2 - \lim_{n \to \infty} \| \AA_0^n P_{\mathrm{rem}}(0) u_0 \|_{L^2}^2 = \| P_{\mathrm{rem}}(0) u_0 \|_{L^2}^2.
$$
{On the other hand, $P(0)=\id-P_{\mathrm{rem}}(0)$ is the orthogonal projection onto the unitary part of the contraction $\AA_0$. Hence}
\be \label{eq:pythagoras}
\|u_0\|_{L^2}^2 = \| P(0) u_0 \|_{L^2}^2 +  \| u_* \|_{L^2}^2.
\ee
But by \eqref{eq:Wu_simple} we deduce
\be \label{eq:pythagoras2}
{\|P(0)u_0\|_{L^2}^2=\sum_{j=1}^M|\langle u_0,f_j\rangle|^2=M.}
\ee
This shows \eqref{eq:L2_quant} and completes the proof of (i) in Theorem \ref{thm:universal}.

\medskip
\textbf{Step 3.} It remains to prove items (ii) and (iii). 

{To prove (ii), recall that $\lim_{t\nearrow T}\omega_j(t)=\omega_j$ for $j=1,\ldots,M$. The numbers $\omega_1,\ldots,\omega_M\in\pt\D$ are the unimodular eigenvalues of $\AA_0$. By Lemma \ref{lem:simple}, they satisfy $\omega_j\neq\omega_\ell$ whenever $j\neq\ell$.}

To show (iii), we argue as follows. Recall from \eqref{eq:pythagoras} and \eqref{eq:pythagoras2} that
$$
\| u_0 \|_{L^2}^2 = \| P(0) u_0 \|_{L^2}^2 + \| u_* \|_{L^2}^2  = M + \| u_* \|_{L^2}^2
$$
where $P(0)$ denotes the orthogonal projection {of} $W$ onto the unitary subspace $W_{\mathrm{u}}$ of $A_T$. From Lemma \ref{lem:no_total_loss} we recall the strict inequality $M < \| u_0 \|_{L^2}^2$, which implies that $u_* \neq 0$.

The proof of Theorem \ref{thm:universal} is now complete. \hfill $\qed$

\section{Existence of finite-time blow-up}

In this section, we construct an explicit family of finite-gap potentials $u_0\in H^\infty_+(\T)$ leading to finite-time {blow-up} of the corresponding solution of \eqref{eq:CS}. For this, we consider finite-gap potentials of the form
\be  \label{eq:finite_gap_blowup}
\boxed{u_0(z) = z^{m} \beta_p(z) \left (a + \frac{c}{1- \bar{p} z} \right ) \quad \mbox{with} \quad \beta_p(z) := \frac{z-p}{1- \bar{p} z}}
\ee
with some integer $m \geq 0$, $p \in \D \setminus \{0 \}$ and $a,c \in \C$ satisfying the constraint
\be \label{eq:constraints22}
\bar{a} c+ \frac{|c|^2}{1-|p|^2} = 2
\ee
with $a\neq 0$ if $m \neq 0$. From the discussion in Section \ref{sec:finite_gap}, we have that $L_{u_0}$ and $S^*$ act invariantly on the finite-dimensional subspace
$$
W= (z \psi_{u_0} L^2_+)^\perp  \quad \mbox{with} \quad \dim W = m + 3
$$
and the inner function $\psi_{u_0}(z) = z^m \beta_p(z)^2$. Since $u_0, 1 \in W$, the explicit formula for $u(t,z)$ reduces to $W$ with the family of contractions 
$$
A_t = \eu^{- it \LL} S^* |_W \quad \mbox{with} \quad t \in \R
$$
{where} $\LL = 2 L_{u_0} + \id$. In the next subsection, we will see that the existence of unimodular eigenvalues for $A_{t_*}$ for some $t_* \in \R$ (equivalent to finite-time blow-up by Lemma \ref{lem:blow_up_criterion}) can be identified with the `resonance' condition
$$
2a+c = 0.
$$

\subsection{Existence of unimodular eigenvalues for $A_t$ when $m=0$}
We consider the case $m=0$ in \eqref{eq:finite_gap_blowup}. Thus, we consider the finite-gap potentials
\be \label{eq:finite_gap_m0}
u_0(z) = \beta_p(z) \left (a + \frac{c}{1- \bar{p} z} \right ) \quad \mbox{with} \quad \beta_p(z)  = \frac{z-p}{1- \bar{p} z}
\ee
with some $p \in \D \setminus \{0 \}$ and $a,c \in \C$ satisfying \eqref{eq:constraints22}.  In this case, we obtain from the discussion in Section \ref{sec:finite_gap} the orthogonal decomposition 
$$
W = W_0 \oplus \C \psi_{u_0} \quad \mbox{with} \quad W_0 = (\psi_{u_0} L^2_+)^\perp
$$
with the Blaschke product $\psi_{u_0} = \beta_p^2$ of degree 2. By well-known results for model spaces, see, e.g., \cite{GaMaRo}, we find
$$
W_0 = \mathrm{span} \left \{ e_0, e_1 \right \} \quad \mbox{with} \quad e_0(z) := \frac{1}{1-\bar{p}z}, \quad e_1(z) := \frac{z}{(1-\bar{p} z)^2}.
$$
Note that $(e_0, e_1)$ is {\em not} an orthonormal basis for $W_0$, but it turns out {to be a convenient choice} for the calculations below. {Set $e_2(z) := \psi_{u_0}(z)$. Then} $(e_0, e_1, e_2)$ is a basis for $W = W_0 \oplus \C \psi_{u_0}$ with $e_2 \perp e_j$ for $j=0,1$. Furthermore, we write\footnote{We use the letter $\Bs$, since $S^*$ denotes the backward shift.}
$$
\Ls = [L_{u_0}|_W]_{(e_0, e_1, e_2)} \quad \mbox{and} \quad \Bs = [S^* |_W]_{(e_0,e_1, e_2)} 
$$
to denote the $3 \times 3$-matrices for $L_{u_0}|_W$ and $S^* |_W$ with respect to the basis $(e_0,e_1,e_2)$. 

\begin{proposition} \label{prop:LB_matrix}
For $u_0(z)$ as in \eqref{eq:finite_gap_m0} with $m=0$, we have
$$
\Ls = \begin{bmatrix} 0 & \alpha & 0 \\ \bar{p} & \beta & 0 \\ 0 & 0 & \nu_{u_0} \end{bmatrix} \quad \mbox{and} \quad \Bs = \begin{bmatrix}  \bar{p} & 1 & * \\ 0 & \bar{p} & * \\ 0 & 0 & 0 \end{bmatrix},
$$
with some $\nu_{u_0} \in \R$ and
$$
\alpha = \frac{2p(a+c)}{c(1-r)}, \quad \beta = \frac{2r(a+c)-2a-c(1-r)}{c(1-r)} \quad \mbox{with} \quad r = |p|^2.
$$
\end{proposition}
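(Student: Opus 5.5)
The plan is to compute the action of $L_{u_0}$ and $S^*$ on each basis vector $e_0,e_1,e_2$ directly, using the reproducing-kernel structure of $e_0,e_1$. The convention is that the $k$-th column of each matrix holds the coordinates of the image of $e_{k-1}$. The key observations are that $e_0=\sum_n \bar p^n z^n$ is the Szeg\H{o} kernel at $p$, so $\langle f,e_0\rangle=f(p)$, and that $e_1=\sum_n n\bar p^{n-1}z^n$ is the derivative kernel, so $\langle f,e_1\rangle=f'(p)$. The third column of $\Ls$ follows at once from Section \ref{sec:finite_gap}: $e_2=\psi_{u_0}$ is an eigenfunction, $L_{u_0}\psi_{u_0}=\nu_{u_0}\psi_{u_0}$. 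The zero entries in the third row of $\Ls$ (first two columns) follow from the invariance of $W_0$ under $L_{u_0}$.

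\textbf{The matrix $\Bs$.} From the power series, $S^*e_0=\bar p e_0$. For $e_1$ we get $S^*e_1=\sum_n (n+1)\bar p^n z^n = e_0+\bar p e_1$, using $\sum_n n\bar p^n z^n=\bar p e_1$. For the third column, note that $S^*e_2\in W$ by Proposition \ref{prop:invariant}. Since $W_0\perp e_2$, the $e_2$-coordinate of $S^*e_2$ equals $\langle S^*e_2,e_2\rangle/\|e_2\|^2$. We have $\langle S^*e_2,e_2\rangle=\langle e_2 - e_2(0),z e_2\rangle$, and this vanishes because $ze_2=z\psi_{u_0}\cdot 1\in z\psi_{u_0}L^2_+\perp W$ while $e_2,1\in W$. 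The remaining entries are left unspecified, as the statement allows.

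\textbf{First two columns of $\Ls$.} For $e_0$: $De_0=z e_0'=\bar p e_1$. For any $g$ we have $\langle T_{\bar u_0}e_0,g\rangle=\langle e_0,u_0g\rangle=\overline{u_0(p)g(p)}=0$, since $u_0(p)=0$ because of the factor $\beta_p$. Hence $L_{u_0}e_0=\bar p e_1$, which gives the first column $(0,\bar p,0)^T$.

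For $e_1$: $\langle T_{\bar u_0}e_1,g\rangle=\overline{(u_0g)'(p)}=\overline{u_0'(p)}\,\overline{g(p)}$, again using $u_0(p)=0$. So $T_{\bar u_0}e_1=\overline{u_0'(p)}\,e_0$, and $T_{u_0}e_0=u_0e_0$ because this product is already holomorphic. Therefore
$$L_{u_0}e_1=\frac{z(1+\bar p z)}{(1-\bar p z)^3}-\overline{u_0'(p)}\,u_0(z)e_0(z).$$
By invariance of $W_0$, this rational function lies in $\mathrm{span}\{e_0,e_1\}$. I will identify the coefficients $\alpha,\beta$ by comparing the singular parts at $z=1/\bar p$. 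Writing $w=1-\bar pz$, one has $e_0=w^{-1}$, $e_1=\bar p^{-1}(w^{-2}-w^{-1})$ and $u_0=(z-p)\big(aw^{-1}+cw^{-2}\big)$. A cheaper route is to evaluate two linear functionals: pair with $e_0$ and $e_1$, or equivalently compute the values at $0$ and the behaviour at $\infty$. For instance, $L_{u_0}e_1(0)=-\overline{u_0'(p)}u_0(0)$ must equal $\alpha e_0(0)=\alpha$. Here $u_0'(p)=\beta_p'(p)\big(a+c/(1-r)\big)=\big(a+c/(1-r)\big)/(1-r)$ and $u_0(0)=-p(a+c)$.

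\textbf{Main obstacle.} The last step is to simplify the raw coefficient
$$\alpha=\frac{p(a+c)\,\big(\bar a(1-r)+\bar c\big)}{(1-r)^2}$$
and its counterpart for $\beta$ into the stated forms. This is where the constraint \eqref{eq:constraints22} enters: multiplying $\bar a c+|c|^2/(1-r)=2$ by $(1-r)/c$ gives $\bar a(1-r)+\bar c=2(1-r)/c$. Substituting this yields $\alpha=2p(a+c)/(c(1-r))$, as claimed. I expect the $\beta$ coefficient to be the messier computation. I will extract it from the coefficient of $z$, or from the $w^{-2}$ singular part, and then apply the same substitution to reach
$$\beta=\frac{2r(a+c)-2a-c(1-r)}{c(1-r)}.$$
A consistency check is available: $\Ls$ must be self-adjoint with respect to the Gram matrix of $(e_0,e_1)$. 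This provides an independent verification of $\beta$, in particular that it is real.
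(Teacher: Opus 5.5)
Your proposal is correct and follows essentially the same route as the paper: the reproducing-kernel identities give $T_{\bar u_0}e_0=0$ and $T_{\bar u_0}e_1=\overline{u_0'(p)}\,e_0$, the constraint turns $\overline{u_0'(p)}$ into $2/(c(1-r))$, and $\Bs_{33}=0$ follows from $\langle e_2, z e_2\rangle=0$. The paper checks by hand that the $(1-\bar p z)^{-3}$ term cancels, whereas you take the invariance of $W_0$ from Section~\ref{sec:finite_gap} and read off $\alpha$ and $\beta$ from two linear functionals; this does produce the stated $\beta$, since the $z$-coefficient gives $\bar p\alpha+\beta = 1-\frac{2[(a+c)-r(2a+3c)]}{c(1-r)}$.
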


\begin{remarks*}
{\em 1) The block decomposition of $\Ls$ and $\Bs$ reflects the known fact that the two-dimensional subspace $W_0= \mathrm{span} \{ e_0, e_1 \} \subset W$ is invariant under $L_{u_0}$ and $S^*$.

2) Below we will show that unimodular eigenvalues for $A_t$ can occur if and only if the resonant condition $2a + c=0$ holds. In this case, the expressions for $\alpha$ and $\beta$ simplify to
$$
\alpha = \frac{p}{1-r}, \quad \beta = \frac{2r}{1-r} \quad \mbox{with} \quad r=|p|^2.
$$}
\end{remarks*}

\begin{proof}
The proof follows from direct calculations. We sketch the main steps below.

\medskip
{\bf Step 1.} Since $e_0(z) = \sum_{n \geq 0} \bar{p}^n z^n$ and $e_1(z) = \sum_{n \geq 0} n \bar{p}^{n-1} z^n$, we find
$$
S^* e_0 = \sum_{n \geq 0} \bar{p}^{n+1} z^n = \bar{p} e_0, \quad S^* e_1 = \sum_{n \geq 0} (n+1) \bar{p}^n z^n = e_0 + \bar{p} e_1.
$$
This directly shows that $S^*(W_0) \subset W_0$ with $W_0 = \mathrm{span} \{ e_0, e_1 \}$, and we obtain the first two {columns} of the matrix $\Bs$ as stated above. Moreover, we note
$$
\langle  e_2, S^* e_2 \rangle = \langle S e_2, e_2 \rangle = \langle z \beta_p^2, \beta_p^2 \rangle = \langle z, |\beta_p|^4 \rangle = \langle z,1 \rangle = 0.
$$
Since $e_2 \perp \{ e_0, e_1\}$, this shows that the matrix element $\Bs_{33}=0$ vanishes.

\medskip
{\bf Step 2.} From Section \ref{sec:finite_gap}, we recall that  $\psi_{u_0}=e_2$ is an eigenfunction of $L_{u_0}$, i.e., we have 
$$
L_{u_0} e_2 = \nu_{u_0} e_2
$$
with some $\nu_{u_0} \in \R$. Thus it {remains to determine} the upper-left $2\times 2$-block of $\Ls$ with respect to the basis $(e_0,e_1)$ of $W_0$.

Since $e_0(z)$ is the reproducing kernel at $p$, we have $f(p) = \langle f, e_0 \rangle$ for all $f \in L^2_+(\T)$. Because of ${u_0(p)=0}$, this implies $\langle f, {T_{\bar{u}_0}} e_0 \rangle = \langle {u_0f}, e_0 \rangle = {u_0(p)} f(p) = 0$ for any $f \in L^2_+(\T)$. This shows that ${T_{\bar{u}_0}} e_0 = 0$. Thus we find
$$
{L_{u_0}} e_0 = D e_0 = z \pt_z \left ( \frac{1}{1-\bar{p} z} \right ) = \bar{p} e_1.
$$
Next we compute ${T_{\bar{u}_0}} e_1$. Here we use that $f'(p) = \langle f, e_1 \rangle$ for all $f \in L^2_+(\T)$, since $e_1(z)$ is {the} derivative reproducing kernel at $p$. Using that ${u_0(p)=0}$ and $f(p) = \langle f, e_0 \rangle$, we infer
$$
\langle f, {T_{\bar{u}_0}} e_1 \rangle = \langle {u_0f}, e_1 \rangle = {(u_0f)'(p)} = {u_0'(p)} f(p) = {u_0'(p)} \langle f, e_0 \rangle \quad \mbox{for all $f \in L^2_+(\T)$}.
$$ 
Hence ${T_{\bar{u}_0}} e_1$ is parallel to $e_0$ with ${T_{\bar{u}_0}} e_1 = {\overline{u_0'(p)}} e_0$. We calculate
$$
{\overline{u_0'(p)}} = \frac{\bar{\xi}}{1-r} \quad \mbox{with $\displaystyle \xi = a + \frac{c}{1-r}$ and $r = |p|^2$}.
$$
Noting that \eqref{eq:constraints22} is equivalent to $\bar{\xi} c = 2$, we conclude
$$
{T_{u_0} T_{\bar{u}_0} e_1 = \frac{2}{c(1-r)} u_0 e_0.}
$$
Next, we observe that
$$
D e_1 = z \pt_z \left ( \frac{z}{s^2} \right ) = \frac{z}{s^2} + \frac{2 \bar{p} z^2}{s^3} \quad \mbox{with} \quad s = 1- \bar{p} z.
$$
Some straightforward manipulations yield
$$
{L_{u_0}} e_1 = \alpha e_0 + \beta e_1 + \frac{\gamma}{s^3} = \alpha e_0 + \beta e_1, 
$$
where the vanishing of $\gamma=0$ follows from using the constraint \eqref{eq:constraints22}, and we obtain
$$
\alpha = \frac{2 p (a+c)}{c (1-r)}, \quad \beta = \frac{2 r(a+c) -2a - c(1-r)}{c(1-r)}, \quad r = |p|^2.
$$
This shows that the upper-left $2 \times 2$-block of the matrix $\Ls$ is given as claimed. This completes the proof of Proposition \ref{prop:LB_matrix}.
\end{proof}

As a next step, we determine a necessary and sufficient condition {under which}
$$
A_t = \eu^{-it \LL} S^* |_W = \eu^{-it} {\eu^{-2itL_{u_0}}} S^* |_W
$$
has a unimodular eigenvalue for some $t_* \in \R$, i.e., we have $\sigma(A_{t_*}) \cap \pt \D \neq \emptyset$. We have the following result.

\begin{proposition} \label{prop:A_t_unimodular}
Let $u_0(z)$ be given as in \eqref{eq:finite_gap_m0} with $m=0$. 
\begin{enumerate}
\item[(i)] If $2a +c =0$, then $A_t$ has a unimodular eigenvalue if and only if
$$
t =( 2 \ell + 1)  \cdot \frac{  \pi (1-|p|^2)}{4 |p|} \quad \mbox{for some $\ell \in \Z$}.
$$
\item[(ii)] If $2a + c \neq 0$, then $A_t$ has no unimodular eigenvalue {for any $t \in \R$}.
\end{enumerate}
\end{proposition}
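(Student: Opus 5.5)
The idea is to reduce to the two-dimensional block $W_0$ and compute there explicitly. By Proposition \ref{prop:LB_matrix}, both $\Ls$ and $\Bs$ are block upper triangular with respect to $W = W_0 \oplus \C e_2$. Indeed, $L_{u_0}$ preserves $W_0$ and $e_2$ separately, and $\Bs_{33}=0$. Hence $A_t$ is block upper triangular and its spectrum is that of $A_t|_{W_0}$ together with the eigenvalue $0$ coming from the $(3,3)$ entry $\eu^{-it(2\nu_{u_0}+1)}\cdot 0$. So $A_t$ has a unimodular eigenvalue if and only if the $2\times2$ contraction
$$
A_t|_{W_0} = \eu^{-it}\eu^{-2it\Ls_0}\Bs_0,\qquad \Ls_0=\begin{bmatrix}0&\alpha\\ \bar p&\beta\end{bmatrix},\quad \Bs_0=\begin{bmatrix}\bar p&1\\0&\bar p\end{bmatrix},
$$
has one.

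For a unimodular eigenvalue I would use the characterization from the proof of Lemma \ref{lem:Wu}. A normalized eigenvector $f$ with $|\omega|=1$ must satisfy $f\perp 1$, so that $\|S^*f\|=\|f\|$. Inside $W_0$ the condition $f\perp1$ means $f(0)=0$. Since $e_0(0)=1$ and $e_1(0)=0$, this forces $f=e_1$ up to a scalar. Hence a unimodular eigenvalue exists exactly when $e_1$ is itself an eigenvector of $A_t$. Now $S^*e_1=e_0+\bar p e_1$, so the condition becomes
$$
\eu^{-2it\Ls_0}(e_0+\bar p e_1)\in \C e_1 .
$$
Equivalently, the $e_0$-component of $\eu^{-2it\Ls_0}(1,\bar p)^T$ vanishes. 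A unimodular modulus is then automatic, since $\|S^*e_1\|=\|e_1\|$ when $e_1\perp1$.

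Next I would compute $\eu^{-2it\Ls_0}$ explicitly. The matrix $\Ls_0$ is self-adjoint in the $W_0$ inner product, so its eigenvalues $\lambda_\pm$ are real, and
$$
\eu^{-2it\Ls_0}=\eu^{-it\beta}\Bigl(\cos(t\delta)\,\id - i\,\tfrac{\sin(t\delta)}{\delta/2}\bigl(\Ls_0-\tfrac{\beta}{2}\bigr)\Bigr),\qquad \delta=\lambda_+-\lambda_- .
$$
Applying this to $(1,\bar p)^T$, the $e_0$-component is, up to the phase $\eu^{-it\beta}$,
$$
\cos(t\delta)+\tfrac{i\sin(t\delta)}{\delta}\,(\beta - 2\alpha\bar p).
$$
This vanishes if and only if $\cos(t\delta)=0$ and $\beta-2\alpha\bar p=\pm\delta$ with a suitable sign, or, more carefully, if and only if the complex number $\beta-2\alpha\bar p$ is real with modulus $\delta$ and $\cos(t\delta)=0$. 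Since the expression is $\cos + i\kappa\sin$, it vanishes only if $\kappa=i\cot(t\delta)$, which is imaginary. I need to check whether $\beta-2\alpha\bar p$ is real; self-adjointness should give $\alpha\bar p$ real ($\alpha\,\overline{\,\cdot\,}$ Gram relation). If so, the condition reduces to $\sin(t\delta)\neq 0$ and $\cos(t\delta)=0$ with $\kappa$ purely imaginary, which is impossible unless $\kappa$ is undefined. I therefore expect the correct bookkeeping to involve the nonorthogonality of $(e_0,e_1)$, and I will redo the computation in an orthonormal basis of $W_0$.

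Concretely, the resonant case should come out as $\beta-2\alpha\bar p$ equal to $0$. Using the remark's values, this gives $\frac{2r}{1-r}-\frac{2r}{1-r}=0$, consistent with a requirement of the form $\cos(t\delta)=0$. Then $\delta^2=\beta^2+4\alpha\bar p=\frac{4r^2+4r(1-r)}{(1-r)^2}=\frac{4r}{(1-r)^2}$, so $\delta=\frac{2|p|}{1-r}$. Solving $\cos(t\delta)=0$ yields $t=(2\ell+1)\frac{\pi(1-|p|^2)}{4|p|}$, exactly as claimed in (i).

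For (ii), I would show that $\beta-2\alpha\bar p=\frac{-2a-c(1-r)+2r(a+c)-4r(a+c)}{c(1-r)}$ simplifies, using $\bar\xi c=2$, to a nonzero multiple of $2a+c$. Then the component $\cos(t\delta)+i\kappa\sin(t\delta)$ has $\kappa$ real and nonzero, and this combination never vanishes. This gives (ii).

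The main obstacle I expect is this algebra: verifying that $\beta-2\alpha\bar p$ is real and proportional to $2a+c$ under the constraint \eqref{eq:constraints22}. Consistent with the self-adjointness of $L_{u_0}$, I would first check the reality of $\alpha\bar p$ and of $\beta$ via the constraint.
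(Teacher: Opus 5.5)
Your final argument is correct, and it takes a genuinely different route from the paper. The paper computes the characteristic polynomial \eqref{eq:P_tau} of the $2\times2$ block. It rescales $z=\bar p\,\eu^{-i\tau\beta/2}w$ to get $w^2-2C_\tau w+1=0$, converts the existence of a root of modulus $1/|p|$ into the ellipse condition \eqref{eq:secular2}, and compares that condition with $\delta^2=(2q+1)^2+\frac{4r}{(1-r)^2}$.

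You instead use the fact, from the proof of Lemma \ref{lem:Wu}, that a unimodular eigenvector is orthogonal to $1$. Inside $W_0$ this forces it to be a multiple of $e_1$. The question then collapses to whether $\eu^{-2it\Ls_2}(e_0+\bar p e_1)\in\C e_1$, i.e.\ to the vanishing of the single scalar $\cos(t\delta)+i\kappa\sin(t\delta)$ with $\kappa=(\beta-2\alpha\bar p)/\delta$.

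Your route is shorter and avoids the ellipse geometry. It also identifies the unimodular eigenvector explicitly: $W_{\mathrm u}=\C e_1$, hence $M=1$ without the mass bound. The paper's route produces the explicit polynomial $P_\tau$, which it reuses in Proposition \ref{prop:expansion_resonance} (rate $\nu=1$) and Proposition \ref{prop:control_non_resonance} (uniform bound $r(A_t)\le\rho_0<1$). Your exact criterion is not quantitative by itself. For the uniform bound you would rerun it on the limit points $\zeta\bigl[c_*\id-\frac{2i}{\delta}s_*(\Ls_2-\frac{\beta}{2}\id)\bigr]$ of $\eu^{-2it\Ls_2}$, using $|c_*+i\kappa s_*|\ge\min(1,|\kappa|)$ and continuity of the spectral radius.

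Two corrections to your write-up.

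\textbf{Middle paragraph.} The hesitation there is unfounded. Orthonormality of $(e_0,e_1)$ is irrelevant: $v\in\C e_1$ means exactly that the $e_0$-coordinate of $v$ vanishes. For real $\kappa$, the expression $\cos(t\delta)+i\kappa\sin(t\delta)$ vanishes if and only if $\kappa=0$ and $\cos(t\delta)=0$. The case $\kappa=0$ is not excluded; it is precisely the resonant case. Your alternative condition ``$\beta-2\alpha\bar p=\pm\delta$'' is wrong and should be dropped.

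\textbf{The ``main obstacle''.} This is a one-line computation. Set $q=a/c$, which is real because \eqref{eq:constraints22} forces $\bar a c\in\R$ and $c\neq0$. Then $\alpha\bar p=\frac{2r(q+1)}{1-r}$ and $\beta=-2q+\frac{3r-1}{1-r}$, hence
\[
\beta-2\alpha\bar p=-\frac{(1+r)(2a+c)}{c(1-r)}=-\frac{(1+r)(2q+1)}{1-r},\qquad \delta^2=\beta^2+4\alpha\bar p=(2q+1)^2+\frac{4r}{(1-r)^2}>0 .
\]
So $\kappa$ is real and vanishes if and only if $2a+c=0$. This gives (ii) directly. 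When $2a+c=0$ one has $\delta=\frac{2|p|}{1-r}$, so $\cos(t\delta)=0$ gives exactly the times in (i). The constraint enters only through the reality of $q$; the proportionality to $2a+c$ holds identically.
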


\begin{proof}
Since $A_t = \eu^{-it \LL} S^* |_W = {\eu^{-it}} \eu^{-2it L_{u_0}} S^* |_W$, we can equivalently discuss the unimodular eigenvalues of 
$$
\tilde{A}_\tau := \eu^{-i \tau L_{u_0}} S^* |_W \quad \mbox{for} \quad \tau \in \R
$$
via {the} simple relation $\tau = 2t$ and multiplication {by} the trivial phase factor ${\eu^{-it}}$. We organize the rest of the proof into the following steps.

\medskip
\textbf{Step 1.}  By the block structure of the matrices $\Ls$ and $\Bs$ in Proposition \ref{prop:LB_matrix}, we find that the matrix $\Ms_\tau \in \C^{3 \times 3}$ for $\tilde{A}_\tau$ with respect to the basis $(e_0,e_1, e_2)$ is given by the block form
$$
\Ms_\tau = \begin{bmatrix} \eu^{-i \tau \Ls_2 }\Bs_2 & * \\ 0 & 0 \end{bmatrix} \quad \mbox{with} \quad \Ls_2 := \begin{bmatrix} 0 & \alpha \\ \bar{p} & \beta \end{bmatrix} \quad \mbox{and} \quad  \Bs_2 := \begin{bmatrix} \bar{p} & 1 \\ 0 & \bar{p} \end{bmatrix}.
$$
Thus the spectrum of $\Ms_\tau$ decomposes as $\sigma(\Ms_\tau) = \sigma(\Ms_{2,\tau}) \cup \{ 0 \}$ and it remains to look for unimodular eigenvalues of the $2 \times 2$-matrix
$$
\Ms_{2,\tau} := \eu^{-i \tau \Ls_2} \Bs_2.
$$
We consider the characteristic polynomial 
$$
P_\tau(z) = \det(z \mathds{1}_2 - \Ms_{2,\tau}) = z^2 - \tr(\Ms_{2,\tau}) z + \det(\Ms_{2,\tau}).
$$
From the constraint equation \eqref{eq:constraints22} we see that $c \neq 0$. Thus we can define the real number
$$
      q:=\frac ac, 
$$
where $q \in \R$ follows from \eqref{eq:constraints22}. Note that
$$
        \beta=-2q+\frac{3r-1}{1-r},
$$
which shows that $\beta$ is a real number, too. Now, let $\lambda_+ \geq \lambda_-$ denote the two eigenvalues of $\Ls_2$, which must be real by {self-adjointness} of $L_{u_0}$, and let us set
$$
        \delta:=\lambda_+-\lambda_-.
$$
From the characteristic polynomial of $\Ls_2$ one obtains
$$
        \delta^2
        =\beta^2+4\alpha\overline p
        =(2q+1)^2+\frac{4r}{(1-r)^2}.
$$
In particular, we see that $\delta>0$.

Next, we observe that
$$
\det (\Ms_{2, \tau}) = \det(\eu^{-i\tau \Ls_2}) \det \Bs_2 = \eu^{-i \tau \tr \, \Ls_2} \bar{p}^2 = \eu^{-i \tau \beta} \bar{p}^2.
$$
To find an explicit expression for $\tr(\Ms_{2, \tau})$, we recall that the standard formula for the exponential of a $2\times2$ matrix gives
\[
\eu^{-i\tau \Ls_2}
=
\eu^{-i\tau\beta/2}
\left[
\cos\left(\frac{\tau\delta}{2}\right) \mathds{1}_2
-
\frac{2i}{\delta}\sin\left(\frac{\tau\delta}{2}\right)
\left(\Ls_2 -\frac{\beta}{2} \mathds{1}_2 \right)
\right]{,}
\]
{where we used again that $\tr (\Ls_2) = \beta$.} In view of  $\tr (\Bs_2)=2\overline p$, a direct computation gives
\[
        \tr\left(\left(\Ls_2-\frac{\beta}{2} \mathds{1}_2\right)\Bs_2\right)=\bar{p}.
\]
Consequently,
\[
\tr(\Ms_{2,\tau})
=
2\bar{p}\,\eu^{-i\tau\beta/2}
\left[
\cos\left(\frac{\tau\delta}{2}\right)
-
\frac{i}{\delta}\sin\left(\frac{\tau\delta}{2}\right)
\right].
\]
Thus we deduce that
\be \label{eq:P_tau}
\boxed{
P_\tau(z)
=
z^2
-
2\bar{p}\,\eu^{-i\tau\beta/2}
\left[
\cos\left(\frac{\tau\delta}{2}\right)
-
\frac{i}{\delta}\sin\left(\frac{\tau\delta}{2}\right)
\right]z
+
\bar{p}^{2}\eu^{-i\tau\beta}.
}
\ee

\medskip
\textbf{Step 2.} Let
\[
        \rho:=|p|, \qquad r=\rho^2,
        \qquad \theta:=\frac{\tau\delta}{2}.
\]
Set
\[
        z=\overline p\,\eu^{-i\tau\beta/2}w.
\]
Then $P_\tau(z)=0$ is equivalent to
\[
        w^2-2C_\tau w+1=0 \quad \mbox{with} \quad        C_\tau :=
        \cos\theta-\frac{i}{\delta}\sin\theta.
\]
Since $|z|=\rho |w|$, the polynomial $P_\tau$ has a unimodular zero if and only if the quadratic equation for $w$ has a root satisfying
\[
        |w|=\frac1\rho.
\]
The roots of $w^2-2C_\tau w+1$ have product $1$. Thus, if one root has modulus $1/\rho$, the other has modulus $\rho$. Writing the root of modulus $1/\rho$ as
\[
        w=\frac1\rho\eu^{i\varphi},
\]
we get
\[
        2C_\tau=w+\frac1w
        =\left(\frac1\rho+\rho\right)\cos\varphi
        +i\left(\frac1\rho-\rho\right)\sin\varphi.
\]
Therefore
\[
        \frac{4r}{(1+r)^2}(\mathrm{Re} \, C_\tau)^2
        +
        \frac{4r}{(1-r)^2}(\mathrm{Im} \, C_\tau)^2
        =1.
\]
Using $\mathrm{Re} \, C_\tau=\cos\theta$ and  $\mathrm{Im} \, C_\tau=-\frac1\delta\sin\theta$,  we obtain the equation
\be \label{eq:secular1}
        \frac{4r}{(1+r)^2}\cos^2\theta
        +
        \frac{4r}{\delta^2(1-r)^2}\sin^2\theta
        =1.
\ee
Recalling that $\theta = \tau \delta/2$, we conclude that
\be \label{eq:secular2}
\boxed{
        \frac{4r}{(1+r)^2}
        \cos^2\left(\frac{\tau\delta}{2}\right)
        +
        \frac{4r}{\delta^2(1-r)^2}
        \sin^2\left(\frac{\tau\delta}{2}\right)
        =1
}
\ee
holds if and only if $P_\tau(z)=0$ has a zero on $\pt \D$.

\medskip
\textbf{Step 3.} Recall the square identity
\[
        \delta^2=(2q+1)^2+\frac{4r}{(1-r)^2}.
\]
Hence
\[
        \frac{4r}{\delta^2(1-r)^2}\leq 1,
\]
with equality if and only if $2q+1=0$, i.e.
\[
        q=-\frac12.
\]
On the other hand,
\[
        \frac{4r}{(1+r)^2}<1,
\]
because $0<r<1$. Therefore, we deduce that \eqref{eq:secular2} can hold only if $q=-1/2$, i.e.,
\[
         2a+c = 0.
\]
Therefore, we deduce that, if $2a+c \neq 0$, then ${\tilde{A}_\tau}$ has no unimodular eigenvalue {for any $\tau\in \R$}. This completes the proof of statement (ii) in Proposition \ref{prop:A_t_unimodular}.

As for (i), let us now assume that 
$$
2a + c = 0,
$$
i.e., we have $q=-1/2$. In this case the second coefficient in \eqref{eq:secular2} is exactly $1$, and we see that $P_\tau(z)$ has a zero on $\pt \D$ if and only if
\[
        \cos^2\theta=0.
\]
Therefore
\[
        \theta=\frac\pi 2+\ell \pi  \quad \mbox{with $\ell \in \Z$}.
\]
Since $\theta=\tau\delta/2$, this gives
\[
        \frac{\tau\delta}{2}=\frac\pi2+\ell \pi,
\]
and hence
\[
        \tau =\frac{(2{\ell}+1)\pi}{\delta} \quad \mbox{with $\ell \in \Z$}.
\]
For $q=-1/2$, we readily find that
\[
        \delta=\lambda_+-\lambda_-=\frac{2|p|}{1-r}.
\]
Therefore, $\tilde{A}_{\tau}$ has {a} unimodular eigenvalue if and only if $\tau = \tau_\ell$ with
\[
        \tau_\ell =\frac{(2\ell+1)\pi(1-r)}{2|p|} \quad \mbox{with $\ell \in \Z$}.
\]
Recalling that $\tau = 2 t$ and $r=|p|^2$, we complete the proof of Proposition \ref{prop:A_t_unimodular}.
\end{proof}

Next, we let $\omega(t)$ denote the eigenvalue of $A_t$ above such that
$$
\omega(t) \to \omega \in \pt \D \quad \mbox{as} \quad t \to T := {t_0} = \frac{\pi(1-|p|^2)}{4|p|}.
$$
The next result shows that $|\omega(t)|^2$ approaches 1 quadratically as $t \to T$.

\begin{proposition} \label{prop:expansion_resonance}
For $A_t$ as in Proposition \ref{prop:A_t_unimodular} with $2a+c=0$, we have
$$
|\omega(t)|^2 = 1- \frac{4 r(1-r)}{(1+r)^3} (T-t)^2 + \O((T-t)^3),
$$
where $r = |p|^2$.
\end{proposition}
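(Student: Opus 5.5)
The plan is to work directly with the characteristic polynomial \eqref{eq:P_tau} obtained in the proof of Proposition \ref{prop:A_t_unimodular}. Since $A_t=\eu^{-it}\tilde A_{2t}$ and $\sigma(\Ms_\tau)=\sigma(\Ms_{2,\tau})\cup\{0\}$, we have $|\omega(t)|=|z|$ for the relevant root $z$ of $P_\tau$ with $\tau=2t$. After the substitution $z=\bar p\,\eu^{-i\tau\beta/2}w$ we get $|\omega(t)|=\rho|w|$, where $w$ solves $w^2-2C_\tau w+1=0$. In the resonant case $q=-1/2$ we have $\delta=2\rho/(1-r)$, so
\[
C_\tau=\cos\theta-ik\sin\theta,\qquad k:=\frac{1-r}{2\rho},\qquad \theta=\frac{\tau\delta}{2}.
\]
The blow-up time $T$ corresponds to $\theta=\pi/2$. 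We therefore set $\theta=\pi/2-\eta$ with $\eta=\delta(T-t)=\frac{2\rho}{1-r}(T-t)$. This gives $\operatorname{Re}C_\tau=\sin\eta$ and $\operatorname{Im}C_\tau=-k\cos\eta$.

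To handle the quadratic cleanly, I use the Joukowski parametrization $w=\eu^{\zeta}$ with $\zeta=\sigma+i\varphi$, so that $C_\tau=\cosh\zeta$, i.e.
\[
\cosh\sigma\cos\varphi=\sin\eta,\qquad \sinh\sigma\sin\varphi=-k\cos\eta.
\]
The two roots correspond to $\pm\zeta$, and the root of interest is the one with $|w|=\eu^{|\sigma|}>1$. As a check at $\eta=0$: $\cos\varphi=0$ and $\sinh^2\sigma=k^2$, so $\cosh\sigma=\sqrt{1+k^2}=\frac{1+r}{2\rho}$ and $\eu^{|\sigma|}=\sinh|\sigma|+\cosh|\sigma|=1/\rho$. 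This is consistent with $|\omega(T)|=1$.

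Eliminating $\varphi$ gives the exact relation
\[
\frac{\sin^2\eta}{1+x}+\frac{k^2\cos^2\eta}{x}=1,\qquad x:=\sinh^2\sigma .
\]
Solving this perturbatively around $x=k^2$ is the main computational step. I write $x=k^2+\delta x$ and expand to order $\eta^2$, which yields
\[
\delta x=-\frac{k^4}{1+k^2}\,\eta^2+\O(\eta^4).
\]
Since $d(\sinh^2\sigma)=2\sinh\sigma\cosh\sigma\,d\sigma$, it follows that
\[
|\sigma|=\sigma_0-\frac{k^3}{2(1+k^2)^{3/2}}\,\eta^2+\O(\eta^3),\qquad \eu^{\sigma_0}=1/\rho .
\]
Consequently
\[
|\omega(t)|^2=\rho^2\eu^{2|\sigma|}=1-\frac{k^3}{(1+k^2)^{3/2}}\,\eta^2+\O(\eta^3).
\]

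Finally, I substitute $k/\sqrt{1+k^2}=\frac{1-r}{1+r}$ and $\eta^2=\frac{4r}{(1-r)^2}(T-t)^2$. The coefficient becomes
\[
\Bigl(\frac{1-r}{1+r}\Bigr)^3\cdot\frac{4r}{(1-r)^2}=\frac{4r(1-r)}{(1+r)^3},
\]
which is the claimed expansion.

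The main obstacle is justifying the perturbative expansion. I need analyticity of $\sigma$ in $\eta$ near $\eta=0$, which follows from the implicit function theorem because $\partial_x$ of the relation at $(x,\eta)=(k^2,0)$ equals $-1/k^2\neq0$; equivalently, it follows from simplicity of the unimodular eigenvalue (Lemma \ref{lem:Wu}). I also need to confirm that the branch selected really is $\omega(t)$, the eigenvalue tending to the unit circle; this holds because the other root has modulus near $\rho^2<1$. The absence of a linear term in $\eta$ is automatic, since the relation depends only on $\eta^2$ through $\sin^2\eta$ and $\cos^2\eta$. This is consistent with the nonnegativity argument in Lemma \ref{lem:blowup_unitarypart} and gives $\nu=1$.
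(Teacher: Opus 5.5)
Your proposal is correct and is essentially the paper's argument. Both eliminate the argument of the relevant root of $w^2-2C_\tau w+1=0$ to get a real equation linking $|z|$ and $\tau$, and then Taylor-expand it about $\theta=\pi/2$; the paper works with $s=|z_+(\tau)|^2$ and obtains $1-\frac{r(1-r)}{(1+r)^3}(\tau-2T)^2$, which is your formula after setting $\tau=2t$. Your Joukowski variable $x=\sinh^2\sigma$ is only a reparametrization of this, and your implicit-function-theorem and branch-selection remarks make explicit what the paper leaves to ``direct calculations.''
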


\begin{proof}
This follows from direct calculations and {an inspection of} the proof of Proposition \ref{prop:A_t_unimodular}. Indeed, we remark that the non-zero eigenvalues $\{ \lambda_+(t), \lambda_-(t) \}$ of $A_t$ can be calculated from the roots $\{ z_+(\tau), z_-(\tau) \}$ of the characteristic polynomial $P_\tau(z) = \det (z \mathds{1}_2 - \Ms_{2,\tau})$ via 
$$
\lambda_\pm(t) = {\eu^{-it}} z_\pm(2t).
$$ 
Let $z_+(\tau)$ be the zero of ${P_\tau(z)}$ with $|z_+(\tau)| \to 1$ as $\tau \to 2T$ and hence $\omega(t) = \lambda_+(t) = {\eu^{-it}} z_+(2 t)$. Denoting $s = |z_+(\tau)|^2$ and $x = \sqrt{r} \tau/(1-r)$, we infer from the proof of Proposition \ref{prop:A_t_unimodular} that $P_\tau(z)=0$ is equivalent to the quadratic equation
$$
\frac{4rs}{(s+r)^2} \cos^2 x + \frac{s(1-r)^2}{(s-r)^2} \sin^2 x = 1.
$$
Expanding near $s=1$ and $x = \pi/2$ yields that
$$
|z_+(\tau)|^2 = 1- \frac{r(1-r)}{(1+r)^3} (\tau - 2 T)^2 + \O((\tau-2 T)^3).
$$
Since $|\omega(t)| = |z_+(2t)|$, we obtain the claimed expansion.
\end{proof}

For later use, we also show {that} all eigenvalues of $A_t$ are uniformly bounded away from $\pt \D$ in the non-resonant case $2a+c \neq 0$.

\begin{proposition} \label{prop:control_non_resonance}
For $A_t$ as in Proposition \ref{prop:A_t_unimodular} with $2a+c \neq 0$, it holds
$$
r(A_t) \leq \rho_0 \quad \mbox{for all $t \in \R$}
$$
with some constant $\rho_0 < 1$ depending on {$u_0$}.
\end{proposition}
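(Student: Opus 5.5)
The spectral radius of $A_t$ is determined by the roots of $P_\tau(z)$ from \eqref{eq:P_tau} (with $\tau=2t$), because $\sigma(A_t)=\eu^{-it}\sigma(\Ms_{2,2t})\cup\{0\}$ and the phase factor does not change moduli. The plan is to work with the normalized variable $w$ from Step~2 of the proof of Proposition \ref{prop:A_t_unimodular}. There $z=\bar p\,\eu^{-i\tau\beta/2}w$, so $|z|=\rho|w|$ with $\rho=|p|$, and the $w$-roots solve $w^2-2C_\tau w+1=0$. The roots $w_\pm$ satisfy $w_+w_-=1$, so it suffices to show
\[
\sup_{\tau\in\R}\max\{|w_+(\tau)|,|w_-(\tau)|\}\le R_0<\frac1\rho .
\]
Then $r(A_t)\le \rho R_0=:\rho_0<1$.

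The key point is that $C_\tau=\cos\theta-\frac{i}{\delta}\sin\theta$ is a periodic function of $\theta=\tau\delta/2$. Hence $C_\tau$ ranges over a compact set, the ellipse
\[
\mathcal E=\{\cos\theta-\tfrac{i}{\delta}\sin\theta:\theta\in[0,2\pi]\}.
\]
The map $C\mapsto \max|w_\pm(C)|$, which returns the larger root modulus of $w^2-2Cw+1$, is continuous, since polynomial roots depend continuously on the coefficients. Its supremum over $\tau$ is therefore a maximum over $\mathcal E$, and it is attained at some $\theta_*$. It remains to show that this maximum is strictly less than $1/\rho$.

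Suppose instead that some root has $|w|=1/\rho$ at $\theta_*$. The computation in Step~2 shows this is equivalent to the identity \eqref{eq:secular1} at $\theta_*$. Step~3 shows that \eqref{eq:secular1} cannot hold when $2q+1\neq0$. Indeed, in that case $\frac{4r}{\delta^2(1-r)^2}<1$, and also $\frac{4r}{(1+r)^2}<1$, so the left-hand side of \eqref{eq:secular1} is at most $\max\{\frac{4r}{(1+r)^2},\frac{4r}{\delta^2(1-r)^2}\}<1$. It remains to exclude $|w|>1/\rho$. This follows because each $A_t$ is a contraction, so $|z|\le1$ and hence $|w|\le 1/\rho$ for every $\tau$.

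The only delicate step is the reduction of the supremum to a maximum. I would handle it by noting that $\max|w_\pm|$ is a continuous $2\pi$-periodic function of $\theta$. A quantitative alternative is available if preferred. Since $\theta\mapsto \frac{4r}{(1+r)^2}\cos^2\theta+\frac{4r}{\delta^2(1-r)^2}\sin^2\theta$ stays at most some $\kappa<1$, the ellipse $\mathcal E$ lies strictly inside the confocal ellipse $\{\frac12(\rho^{-1}\eu^{i\varphi}+\rho\,\eu^{-i\varphi})\}$. The Joukowski map $w\mapsto \frac12(w+w^{-1})$ is monotone in $|w|$ on these level ellipses, which gives an explicit $R_0<1/\rho$ depending only on $r$ and $\delta$, and hence on $u_0$.
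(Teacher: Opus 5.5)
Your proof is correct and is essentially the paper's argument: both rest on compactness of the coefficient data of $P_\tau$ combined with the strict inequality from Step~3 of the proof of Proposition~\ref{prop:A_t_unimodular}. The paper argues by contradiction, taking a sequence $t_n$ with $r(A_{t_n})\to1$ and a limiting polynomial $P_\infty$ whose phase is absorbed into a unimodular $\zeta$. You instead discard the phase via $|z|=\rho|w|$ and note that the larger root modulus is a continuous $2\pi$-periodic function of $\theta$, so the supremum is attained and the ellipse inequality together with the contraction bound $|w|\le 1/\rho$ forces it strictly below $1/\rho$; this is a slightly cleaner packaging of the same idea.
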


\begin{proof}
Let $\rho_0 := \sup_{t \in \R} r(A_t) \leq 1$. To show that $\rho_0 < 1$ holds, we argue by contradiction and assume that there exists a sequence $(t_n)$ in $\R$ such that $r(A_{t_n}) \to 1$. {{We remark that $|t_n| \to \infty$. Indeed, otherwise continuity of $r(A_t)$ would give $r(A_{t_*})=1$ at a finite accumulation point $t_*$, contrary to Proposition \ref{prop:A_t_unimodular}.}} Furthermore, from the proof of Proposition \ref{prop:A_t_unimodular}, we see that one of the roots of the characteristic polynomial $P_{\tau_n}(z)$ must approach $\pt \D$ along the sequence $\tau_n = 2 t_n$. From \eqref{eq:P_tau} and by passing to a subsequence if necessary, we deduce that
$$
P_{\tau_n}(z) \to {P_\infty(z) := z^2 -2 \bar{p} \zeta \left [ c_* - \frac{i}{\delta} s_* \right ] z + \bar{p}^2 \zeta^2}
$$
uniformly in $z \in \ov{\D}$ for some $\zeta \in \C$ with $|\zeta|=1$ and $c_*, s_* \in \R$ such that ${c_*^2 + s_*^2 = 1}$. {Choose roots $z_n$ of $P_{\tau_n}$ such that $|z_n|\to1$. Passing to a further subsequence, we have $z_n\to z_*$ for some $z_*\in\pt\D$. The uniform convergence of the characteristic polynomials yields $P_\infty(z_*)=0$.} As in the proof of Proposition \ref{prop:A_t_unimodular}, we analyze the roots of $P_\infty(z)$ by making the substitution $z = \bar{p} \zeta w$ to obtain the quadratic equation  given by
$$
w^2 - 2 C_* w + 1=0 \quad \mbox{with} \quad C_* := c_* - \frac{i}{\delta} s_*.
$$
By a straightforward adaptation of the arguments in the proof of Proposition \ref{prop:A_t_unimodular}, we deduce that $P_\infty(z)$ has a root on $\pt \D$ if and only if
\be \label{eq:ellipse}
\frac{4r}{(1+r)^2} c_*^2 + \frac{4r}{\delta^2 (1-r)^2} s_*^2 = 1
\ee
with the constant $\delta > 0$ as in the proof of Proposition \ref{prop:A_t_unimodular} and $r = |p|^2$. From $2a + c \neq 0$ we see that $q=\frac{a}{c} \neq -\frac{1}{2}$, whence it {follows that}
$$
\delta^2 = (2q+1)^2 + \frac{4r}{(1-r)^2} > \frac{4r}{(1-r)^2}.
$$
Since $\frac{4r}{\delta^2 (1-r)^2} < 1$, $\frac{4r}{(1+r)^2} < 1$ and  ${c_*^2 + s_*^2 = 1}$, we deduce that \eqref{eq:ellipse} cannot hold. This is the desired contradiction and we conclude that $\rho_0 < 1$ holds.
\end{proof}

\subsection{Proof of Theorem \ref{thm:blowup} (Single-bubble case)} Let us first consider finite-gap potentials $u_0 \in H^\infty_+(\T)$ given by \eqref{def:u0_blowup} with
$$
m=0.
$$
Recall that $W$ {denotes} the finite-dimensional subspace on which the explicit formula reduces. By Proposition \ref{prop:A_t_unimodular} (i), we see that $A_t = \eu^{-it \LL} S^* |_W$ has a unimodular eigenvalue if and only if  $t = t_\ell$ with
$$
t _\ell =  (2 \ell + 1)  \cdot \frac{  \pi (1-|p|^2)}{4 |p|} \quad \mbox{with $\ell \in \Z$}.
$$
Let $T = {t_0} = \frac{\pi(1-|p|^2)}{4 |p|} > 0$ be the {smallest such positive time}. By Lemma \ref{lem:blow_up_criterion}, we deduce that the corresponding solution $u \in C([0,T); H^\infty_+(\T))$ blows up at $T >0$. Thus we can apply Theorem \ref{thm:universal}, where we note that 
$$
M=1,
$$ 
since $1 \leq M \leq \|u_0 \|_{L^2}^2 < 2$ by \eqref{ineq:L2_mass_single}. To determine the blow-up rate with 
$$
\nu = \nu_1 = 1,
$$ 
we use the explicit expansion provided by Proposition \ref{prop:expansion_resonance}. This completes the proof of Theorem \ref{thm:blowup} when $m=0$.

It remains to discuss the case $m \geq 1$ for $u_0 \in H^\infty_+(\T)$ given by \eqref{def:u0_blowup}. Here we note that
$$
u_0(z) = z^m u_0^{(0)}(z),
$$
where $u_0^{(0)} \in H^\infty_+(\T)$ is a finite-gap potential given by \eqref{def:u0_blowup} with $m=0$. Thus the solution $u \in C(I; H^\infty_+(\T))$ of \eqref{eq:CS} with $u(0)={u_0}$ coincides with the one obtained by using the discrete {Galilean}-type symmetry in Lemma \ref{lem:Galilean}, i.e., we have $u(t) = \Gc_m u^{(0)}(t)$.  {Furthermore,} by Lemma \ref{lem:Galilean} (i), we deduce that $u \in C([0,T); H^\infty_+(\T))$ also blows up at time $T = \frac{\pi (1-|p|^2)}{4 |p|} > 0$. Thus we can apply Theorem \ref{thm:universal}, where we note that $M = 1$ since $1 \leq M < \| u_0 \|_{L^2}^2 < 2$ by \eqref{ineq:L2_mass_single} again. Finally, to see that
$$
\nu = \nu_1 = 1,
$$
we notice that Lemma \ref{lem:Galilean} (i) yields that
$$
\| u(t) \|_{H^s} \sim_{m,s} \| u^{(0)}(t) \|_{H^s} \sim_s  (T-t)^{-2s} \quad \mbox{as} \quad   t \nearrow T.
$$

{{The proof of Theorem \ref{thm:blowup} is now complete. \hfill $\qed$}}

\subsection{Proof of Theorem \ref{thm:blowup_multi} (Multi-bubble case)}
{
Let $u \in C([0,T);H^\infty_+(\T))$ denote the solution with initial datum $u_0$ appearing in Theorem \ref{thm:blowup}, and set
\[
T_k:=\frac{T}{k^2}.
\]
By Lemma \ref{lem:covering}, the function
\[
u^{(k)}(t,z)=\sqrt{k}\,u(k^2t,z^k)
\]
is a solution of \eqref{eq:CS} on $[0,T_k)$. Its initial datum is a finite-gap potential by Lemma \ref{lem:covering_finite_gap}; for $k=1$ this is simply the original datum. Moreover, for every $s>0$, Lemma \ref{lem:covering} gives
\[
\|u^{(k)}(t)\|_{H^s}
\geq
\sqrt{k}\,\|u(k^2t)\|_{H^s}.
\]
Consequently, $u^{(k)}$ blows up at $T_k$.

It remains to determine the blow-up dynamics. From Theorem \ref{thm:blowup}, we have
\[
u(\tau,z)-\frac{\beta(\tau)}{1-\omega(\tau)z}
\longrightarrow u_*(z)
\quad\mbox{in }H^\infty_+(\T)
\quad\mbox{as }\tau\nearrow T.
\]
Since $\omega(\tau)$ converges to a non-zero unimodular number, after possibly decreasing the neighborhood of $T$, we may choose an analytic function $\omega_1^{(k)}(t)$ such that
\[
\big(\omega_1^{(k)}(t)\big)^k=\omega(k^2t).
\]
For $j=1,\ldots,k$, define
\[
\omega_j^{(k)}(t)
:=
\omega_1^{(k)}(t)\eu^{2\pi i(j-1)/k},
\qquad
\beta_j^{(k)}(t)
:=
\frac{\beta(k^2t)}{\sqrt{k}}.
\]
The elementary partial-fraction identity
\[
\frac{1}{1-\omega(k^2t)z^k}
=
\frac1k\sum_{j=1}^k
\frac{1}{1-\omega_j^{(k)}(t)z}
\]
therefore yields
\[
u^{(k)}(t,z)
-
\sum_{j=1}^k
\frac{\beta_j^{(k)}(t)}{1-\omega_j^{(k)}(t)z}
\longrightarrow
u_*^{(k)}(z)
:=
\sqrt{k}\,u_*(z^k)
\]
in $H^\infty_+(\T)$ as $t\nearrow T_k$.

Writing
\[
|\omega(\tau)|^2
=
1-c(T-\tau)^2+\O((T-\tau)^3),
\]
we obtain, since $T-k^2t=k^2(T_k-t)$,
\[
1-\big|\omega_j^{(k)}(t)\big|^2
=
ck^3(T_k-t)^2+\O((T_k-t)^3)
\]
and
\[
\big|\beta_j^{(k)}(t)\big|^2
=
ck^3(T_k-t)^2+\O((T_k-t)^3).
\]
Thus $\nu_j=1$ for every $j=1,\ldots,k$. Furthermore, the limits
\[
\omega_{j,*}
=
\omega_{1,*}\eu^{2\pi i(j-1)/k},
\qquad
j=1,\ldots,k,
\]
are distinct and form a regular $k$-gon on $\pt\D$. Since these poles are distinct and their residues are non-zero for $t<T_k$ sufficiently close to $T_k$, the above decomposition contains exactly $k$ bubbles. Hence $M=k$.

Finally, the mass identity in Lemma \ref{lem:covering} and the corresponding identity in Theorem \ref{thm:blowup} give
\[
\|u_*^{(k)}\|_{L^2}^2
=
k\|u_*\|_{L^2}^2
=
k\big(\|u_0\|_{L^2}^2-1\big)
=
\|u_0^{(k)}\|_{L^2}^2-k.
\]
Also $u_*^{(k)}\neq0$, since $u_*\neq0$. This completes the proof of Theorem \ref{thm:blowup_multi}. \hfill $\qed$
}

\section{Global existence in the non-resonant case}

{
We first consider the case $m=0$. By Proposition \ref{prop:control_non_resonance}, there exists $\rho_0<1$ such that
\[
r(A_t)\leq \rho_0
\quad\mbox{for every }t\in\R.
\]
Hence Lemma \ref{lem:blow_up_criterion} shows that the corresponding solution exists globally in time.

We now prove the uniform Sobolev bounds. Since $W$ is finite-dimensional and the operators $A_t$ are contractions, the closure of the family
\[
\{A_t:t\in\R\}
\]
is compact in $\mathcal L(W)$. By continuity of the spectral radius, every operator in this closure has spectral radius at most $\rho_0$. Choose $R>1$ such that $R\rho_0<1$. It follows that $\id-zA_t$ is invertible for every $t\in\R$ and every $|z|\leq R$. Compactness therefore gives
\[
\sup_{t\in\R}\sup_{|z|\leq R}
\|(\id-zA_t)^{-1}\|_{\mathcal L(W)}
<+\infty.
\]
Using the finite-dimensional explicit formula, we conclude that
\[
\sup_{t\in\R}\sup_{|z|\leq R}|u(t,z)|
\lesssim_{u_0,R}1.
\]
By Cauchy's estimate, the Fourier coefficients satisfy
\[
|\widehat u(t,n)|
\lesssim_{u_0,R}R^{-n}
\quad\mbox{for every }n\geq0\mbox{ and }t\in\R.
\]
Consequently,
\[
\sup_{t\in\R}\|u(t)\|_{H^s}^2
\lesssim_{u_0,R}
\sum_{n\geq0}(1+n^2)^sR^{-2n}
<+\infty
\]
for every $s>0$.

It remains to consider $m\geq1$. Write
\[
u_0(z)=z^m v_0(z),
\qquad
v_0(z)=\beta_p(z)
\left(a+\frac{c}{1-\bar pz}\right).
\]
The datum $v_0$ satisfies the same constraint and the same non-resonance condition. Let $v$ be its global solution. By Lemma \ref{lem:Galilean}, the solution with initial datum $u_0$ is
\[
u(t,z)
=
\eu^{-im^2t}z^m v(t,\eu^{-2imt}z).
\]
It is therefore global, and Lemma \ref{lem:Galilean} also gives
\[
\sup_{t\in\R}\|u(t)\|_{H^s}
\lesssim_{m,s}
\sup_{t\in\R}\|v(t)\|_{H^s}
\lesssim_{u_0,s}1.
\]
This completes the proof of Theorem \ref{thm:gwp_finite_gap}. \hfill $\qed$

\medskip

To prove Corollary \ref{coro:instability}, let $u_0$ denote the one-pole datum underlying $u_0^{(k)}$ in Theorem \ref{thm:blowup_multi}. Thus its coefficients satisfy $c=-2a$. Choose a sequence
$q_n\in\R\setminus\{-1/2,0\}$ with $q_n\rightarrow-\frac12$ such that
\[
q_n+\frac{1}{1-|p|^2}>0,
\]
and define
\[
c_n
:=
\frac{c}{|c|}
\left(
\frac{2}{q_n+(1-|p|^2)^{-1}}
\right)^{1/2},
\qquad a_n:=q_nc_n.
\]
Then
\[
\bar a_nc_n+\frac{|c_n|^2}{1-|p|^2}=2,
\qquad
2a_n+c_n\neq0,
\]
and $(a_n,c_n)\to(a,c)$. Hence
\[
v_{0,n}(z)
:=
z^m\beta_p(z)
\left(
a_n+\frac{c_n}{1-\bar pz}
\right)
\]
is a non-resonant finite-gap potential converging to $u_0$ in $H^\infty_+(\T)$. By Theorem \ref{thm:gwp_finite_gap}, its corresponding solution $v_n$ is global and satisfies
\[
\sup_{t\in\R}\|v_n(t)\|_{H^s}<+\infty
\]
for every $s>0$.

Set
\[
u_{0,n}(z):=\sqrt{k}\,v_{0,n}(z^k),
\qquad
u_n(t,z):=\sqrt{k}\,v_n(k^2t,z^k).
\]
Then $u_{0,n}\to u_0^{(k)}$ in $H^\infty_+(\T)$, and Lemma \ref{lem:covering} shows that $u_n$ is a global solution. These covered data are again finite-gap potentials. Indeed, if $q_1,\ldots,q_k$ are the roots of $q^k=p$, then
\[
u_{0,n}(z)
=
z^{km}\prod_{j=1}^k\beta_{q_j}(z)
\left(
\sqrt{k}\,a_n+
\sum_{j=1}^k
\frac{c_n/\sqrt{k}}{1-\bar q_jz}
\right),
\]
and, for every fixed $j$,
\[
\sqrt{k}\overline{a_n}\frac{c_n}{\sqrt{k}}
+
\sum_{\ell=1}^k
\frac{|c_n|^2/k}{{1-\bar q_j q_\ell}}
=
\bar a_nc_n+\frac{|c_n|^2}{1-|p|^2}
=
2.
\]
Finally, Lemma \ref{lem:covering} yields
\[
\sup_{t\in\R}\|u_n(t)\|_{H^s}
\leq
k^{s+1/2}
\sup_{\tau\in\R}\|v_n(\tau)\|_{H^s}
<+\infty.
\]
This proves Corollary \ref{coro:instability}. \hfill $\qed$
}

\begin{appendix}

\section{Two useful symmetries of \eqref{eq:CS}}

\label{sec:app}

{{We discuss two symmetry transformations for solutions of \eqref{eq:CS} that will be useful in our analysis. Throughout this section, we consider smooth solutions of \eqref{eq:CS}. We begin with a discrete Galilean transformation on the torus. At this point, we do not assume that the solution $u$ is chiral; in particular, we do not impose $u(t)\in H^\infty_+(\T)$ for $t\in I$.}}

\begin{lemma}[Discrete Galilean-type symmetry] \label{lem:Galilean}
Let $I \subset \R$ be an interval with $0 \in I$ and suppose that
$$
u \in C(I; H^\infty(\T))
$$
solves \eqref{eq:CS} with initial datum $u(0) = u_0 \in H^\infty(\T)$. For every integer $m \in \Z$, define
$$
\Gc_m u(t, x) := \eu^{-i m^2 t} \eu^{imx} u(t, x-2mt) \quad \mbox{for} \quad (t,x) \in I \times \T.
$$
Then $\Gc_m u \in C(I; H^\infty(\T))$ solves \eqref{eq:CS} with initial datum $(\Gc_m u)(0,x) = \eu^{imx} u_0(x)$ for $x \in \T$. Moreover, we have the following properties.
\begin{enumerate}
\item[(i)] For every $s \geq 0$, it holds
$$
C_{m,s}^{-1} \| u(t) \|_{H^s} \leq \| \Gc_m u(t) \|_{H^s} \leq C_{m,s} \| u(t) \|_{H^s} \quad \mbox{for} \quad t \in I,
$$
with some constant $C_{m,s} > 0$ depending only on $m$ and $s$. For $s=0$, we have the isometry 
$$
\| \Gc_m u(t)\|_{L^2} = \| u(t) \|_{L^2} \quad \mbox{for} \quad t \in I.
$$
\item[(ii)]  If $u \in C(I; H^\infty_+(\T))$ and $m \in \Z_{\geq 0}$, then $\Gc_m u \in C(I; H^\infty_+(\T))$.
\end{enumerate}
\end{lemma}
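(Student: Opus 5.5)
Write $v := \Gc_m u$, so $v(t,x) = \eu^{-im^2t}\eu^{imx}u(t,y)$ with $y = x-2mt$. The claim that $v$ solves \eqref{eq:CS} is a direct computation, and the plan is to carry it out term by term. The time derivative is
$$
i\pt_t v = \eu^{-im^2t}\eu^{imx}\bigl(m^2 u + i\pt_t u - 2im\,\pt_x u\bigr)(t,y).
$$
The second spatial derivative is
$$
-\pt_x^2 v = \eu^{-im^2t}\eu^{imx}\bigl(m^2u - 2im\,\pt_xu - \pt_x^2u\bigr)(t,y).
$$
Subtracting, the linear parts give
$$
i\pt_t v + \pt_x^2 v = \eu^{-im^2t}\eu^{imx}\bigl(i\pt_tu + \pt_x^2 u\bigr)(t,y),
$$
because the $m^2$ terms and the $-2im\,\pt_x u$ terms cancel.

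The nonlinear term is easy to transport. We have $|v|^2(t,x) = |u|^2(t,y)$. The Szeg\H{o} projector $\Pi$ and $D$ commute with translations, so
$$
D\Pi(|v|^2)(t,x) = \bigl(D\Pi(|u|^2)\bigr)(t,y).
$$
Multiplying by $v$ reproduces exactly the phase factor $\eu^{-im^2t}\eu^{imx}$. Hence $v$ satisfies \eqref{eq:CS} at $(t,x)$ if and only if $u$ does at $(t,y)$.

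It is worth noting that the nonlinearity only involves $|v|^2$, so the modulation $\eu^{imx}$ causes no trouble even though $\Pi$ does not commute with multiplication by $\eu^{imx}$. This is the one point to state explicitly. The initial datum is $\eu^{imx}u_0$, and continuity in $H^\infty$ follows from continuity of translations and of multiplication by a smooth function.

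For (i), note that translation is an isometry on every $H^s$ and preserves each Fourier modulus. Multiplication by $\eu^{imx}$ shifts Fourier indices by $m$, so
$$
\|\Gc_m u(t)\|_{H^s}^2 = \sum_n (1+(n+m)^2)^s|\widehat{u(t)}(n)|^2.
$$
Peetre's inequality $(1+(n+m)^2)^s \le 2^s(1+m^2)^s(1+n^2)^s$ gives the upper bound. The lower bound follows from the same inequality applied to the inverse transform $\Gc_{-m}$, or directly by symmetry in $n \leftrightarrow n+m$. For $s=0$, Plancherel gives the isometry.

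For (ii), if $\widehat{u(t)}(n)=0$ for $n<0$, then the Fourier coefficients of $\Gc_m u(t)$ are $\eu^{-im^2t}\eu^{-2imt(n-m)}\widehat{u(t)}(n-m)$ at index $n$. These vanish for $n<m$, and in particular for $n<0$ when $m\ge0$. The only step needing care is the bookkeeping of the cross terms $-2im\,\pt_x u$ in the first paragraph; there is no real obstacle.
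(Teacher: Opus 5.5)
Your proposal is correct and follows the paper's proof essentially step by step. Both use the same direct computation in which the $m^2$ and $2im\,\pt_x u$ terms cancel in the linear part, translation invariance of $D\Pi$ acting on $|u|^2$ for the nonlinearity, and the Fourier identity $\|\Gc_m u(t)\|_{H^s}^2=\sum_n (1+(n+m)^2)^s|\widehat{u(t)}(n)|^2$ combined with a Peetre-type bound (the paper uses the equivalent elementary bound with $c_m=2(1+m^2)$), with (ii) read off from the shifted Fourier support.
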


\begin{remarks*}
{\em 1) {Applying $\Gc_m$ with $m<0$ to a chiral blow-up solution produces another smooth blow-up solution. It is non-chiral whenever the initial datum has a non-zero Fourier coefficient at some frequency $0\leq n<-m$. In particular, this applies to the single-bubble solutions with $m=0$ in Theorem \ref{thm:blowup}, since their constant Fourier coefficient is non-zero.}

2) For chiral solutions $u \in C(I; H^\infty_+(\T))$ and $m \in \Z_{\geq 0}$, the symmetry transformation can be written as
$$
(\Gc_m u)(t,z) = \eu^{-i m^2 t} z^m u(t, \eu^{-2imt} z) \quad \mbox{for} \quad (t,z) \in I \times \D
$$
and the action on the initial datum $u_0 \in H^\infty_+(\T)$ amounts to applying $m$ times the right shift to $u_0$, i.e., $u_0(z) \mapsto (S^mu_0 )(z) = z^m u_0(z)$.

3) The discrete Galilean symmetry above easily extends to less regular solutions of \eqref{eq:CS}, but we omit this here.}
\end{remarks*}
\begin{proof}
The {proof follows} from direct calculations as follows. Let $m \in \Z$ be given and define $y := x-2mt$ and $\Phi(t,x) := \eu^{imx-i m^2 t}$ for $t \in I$ and $x \in \T$. Then
$$
u^{(m)}(t,x) := (\Gc_m u)(t,x) = \Phi(t,x) u(t,y).
$$
It is evident that ${u^{(m)}} \in C(I; H^\infty(\T))$ holds. To see that it solves \eqref{eq:CS}, we note that a direct computation gives
$$
i \pt_t u^{(m)} = m^2 \Phi u + i \Phi \pt_t u - 2im \Phi \pt_y u, \quad
\pt_x^2 u^{(m)} = -m^2 \Phi u + 2im \Phi \pt_y u + \Phi \pt_y^2 u.
$$
Therefore,
\be \label{eq:Gal1}
i \pt_t u^{(m)} + \pt_x^2 u^{(m)} = \Phi(t,x) (i \pt_t u + \pt_y^2 u)(t,y).
\ee
Next, we consider the nonlinear term in \eqref{eq:CS}. Let $\tau_a$ denote the translation by $a$, i.e., 
$$
(\tau_a f)(x) := f(x-a).
$$
Then
$$
|u^{(m)}(t,x)|^2 = |u(t,x-2mt)|^2 = (\tau_{2mt}|u(t,\cdot)|^2)(x).
$$
Since both $\Pi$ and $D=-i \pt_x$ commute with $\tau_a$, we have
$$
D \Pi ( |u^{(m)}(t,\cdot)|^2)(x) = D \Pi(|u(t,\cdot)|^2)(x-2mt) = D \Pi(|u(t,\cdot)|^2)(y).
$$
Thus
\be \label{eq:Gal2}
D \Pi (|u^{(m)}|^2) u^{(m)} = \Phi(t,x) [ D \Pi (|u|^2) u{]}(t,y).
\ee
Combining \eqref{eq:Gal1} and \eqref{eq:Gal2}, we get
\[
\begin{aligned}
&\left [ {i}\pt_tu^{(m)}
+\pt_x^2u^{(m)}
+2D\Pi(|u^{(m)}|^2)u^{(m)} \right ](t,x)
\\
&\qquad
=
\Phi(t,x)
\left[
i\pt_t u+\pt_y^2u+2D\Pi(|u|^2)u
\right](t,y)
=
0,
\end{aligned}
\]
since $u$ solves \eqref{eq:CS}. Hence $u^{(m)} \in C(I; H^\infty(\T))$ solves \eqref{eq:CS} with initial datum $u^{(m)}(0,x) = {\eu^{imx}} u_0(x)$ for $x \in \T$.

To prove statement (i), we note by using the Fourier series that
$$
u(t,x) = \sum_{n \in \Z} \widehat{u}_n(t) \eu^{i nx}, \quad u^{(m)}(t,x) = \sum_{n \in \Z} \eu^{-im^2 t} \eu^{-2imnt} \widehat{u}_n(t) \eu^{i(n+m) x}
$$
for $t \in I$ and $x \in \T$. Hence it follows
$$
\| u^{(m)}(t) \|_{H^s}^2 = \sum_{n \in \Z} (1+(n+m)^2)^s |\widehat{u}_n(t)|^2.
$$
It is elementary to check that $c_m^{-1} (1+n^2) \leq 1+(n+m)^2 \leq c_m (1+n^2)$ for all $n \in \Z$ with the constant $c_m = 2(1+m^2)>0$. Thus we easily conclude the proof of item (i), noticing also the evident fact that $\| u^{(m)}(t) \|_{L^2} = \| u(t) \|_{L^2}$ corresponding to the case $s=0$.

Finally, if $m \geq 0$ is nonnegative and ${u} \in C(I; H^\infty_+(\T))$, then it is obvious that $\Gc_m$ preserves chirality of solutions.
\end{proof}

Next, we record another useful symmetry transformation for \eqref{eq:CS}, which involves the ({orientation-preserving}) $k$-fold cover of the torus given by the map $x \mapsto kx$ for $x \in \T$ with a given integer $k \geq 1$, where we include the trivial case $k=1$ for notational convenience.

\begin{lemma}[Discrete covering symmetry] \label{lem:covering}
Let $u \in C(I; H^\infty(\T))$ be as in Lemma \ref{lem:Galilean}. For every integer $k \geq 1$, we define
$$
\Cc_k u(t,x) :=  \sqrt{k} u(k^2 t, k x) \quad \mbox{for} \quad (t,x) \in I_k \times \T,
$$
on the rescaled time interval $I_k := \{ t \in \R \mid k^2 t \in I \}$. Then $\Cc_k u \in C(I_k; H^\infty(\T))$ solves \eqref{eq:CS} with initial datum $\Cc_k u(0,x) = \sqrt{k} u_0(kx)$ for $x \in \T$. Furthermore, the following properties hold.

\begin{itemize}
\item[(i)] For all $s \geq 0$, we have
$$
k^{1/2} \| u(k^2 t) \|_{H^s} \leq \| \Cc_k u(t) \|_{H^s} \leq k^{s+1/2} \| u(k^2 t) \|_{H^s} \quad \mbox{for} \quad t \in I_k.
$$
In particular, it holds that  $\| \Cc_k u(0) \|_{L^2} = k^{1/2} \| u_0 \|_{L^2}$.
\item[(ii)] If $u \in C(I; H^\infty_+(\T))$, then $\Cc_k u \in C(I_k; H^\infty_+(\T))$.
\end{itemize}

\end{lemma}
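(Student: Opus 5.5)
The plan is to verify the equation directly, and then read off (i) and (ii) from Fourier series. Set $v(t,x):=\sqrt{k}\,u(k^2t,kx)$. Since $x\mapsto kx$ is well defined $\T\to\T$ for integer $k\geq1$, $v$ is smooth and $2\pi$-periodic. Continuity in time, $v\in C(I_k;H^\infty(\T))$, follows from the Fourier bound in (i) applied to differences $v(t)-v(t')$.

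\textbf{Step 1: linear part.} By the chain rule,
$$
i\pt_t v=k^{5/2}(i\pt_t u)(k^2t,kx), \qquad \pt_x^2 v=k^{5/2}(\pt_x^2u)(k^2t,kx).
$$

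\textbf{Step 2: nonlinear part.} This is the one step needing care. I will show that $\Pi$ and $D$ intertwine with the dilation operator $(\mathcal{R}_kf)(x):=f(kx)$. If $f=\sum_n\widehat f(n)\eu^{inx}$, then $\mathcal R_k f=\sum_n\widehat f(n)\eu^{iknx}$. Since $kn\geq0$ if and only if $n\geq0$, we get $\Pi\mathcal R_k=\mathcal R_k\Pi$. For the derivative, $D\mathcal R_k=k\,\mathcal R_kD$.

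Now $|v|^2=k\,\mathcal R_k|u(k^2t)|^2$. Hence
$$
D\Pi(|v|^2)v=k\cdot k\cdot\sqrt{k}\,\mathcal R_k\big[D\Pi(|u|^2)u\big](k^2t)=k^{5/2}\big[D\Pi(|u|^2)u\big](k^2t,kx).
$$
All three terms thus carry the common factor $k^{5/2}$, so $v$ solves \eqref{eq:CS} on $I_k$. Its initial datum is $\sqrt{k}\,u_0(kx)$.

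\textbf{Step 3: properties (i) and (ii).} The Fourier coefficients of $v(t)$ are $\widehat{v(t)}(kn)=\sqrt{k}\,\widehat{u(k^2t)}(n)$, and all other coefficients vanish. Therefore
$$
\|v(t)\|_{H^s}^2=k\sum_n(1+k^2n^2)^s|\widehat{u(k^2t)}(n)|^2 .
$$
The elementary bounds $1+n^2\leq1+k^2n^2\leq k^2(1+n^2)$ give
$$
k^{1/2}\|u(k^2t)\|_{H^s}\leq\|v(t)\|_{H^s}\leq k^{s+1/2}\|u(k^2t)\|_{H^s},
$$
with equality $\|v(t)\|_{L^2}=k^{1/2}\|u(k^2t)\|_{L^2}$ at $s=0$. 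The same Fourier description shows that negative modes are never created, since $kn<0$ if and only if $n<0$. This gives (ii), and in Hardy-space notation it reads $\Cc_ku(t,z)=\sqrt{k}\,u(k^2t,z^k)$.
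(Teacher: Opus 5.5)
Your proof is correct and follows the paper's argument: the chain rule gives the common factor $k^{5/2}$ in all three terms of the equation, the dilation $f\mapsto f(k\,\cdot)$ commutes with $\Pi$ and scales $D$ by $k$, and (i)--(ii) are read off from the Fourier coefficients $\widehat{v(t)}(kn)=\sqrt{k}\,\widehat{u(k^2t)}(n)$. Two small differences: your direct comparison $1+n^2\leq 1+k^2n^2\leq k^2(1+n^2)$ yields the exact constants a little more cleanly than the paper's homogeneous $\dot H^s$ scaling, and you check continuity in time explicitly, which the paper leaves implicit.
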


\begin{remarks*}{\em 
1) This symmetry transformation for \eqref{eq:CS} on $\T$ is reminiscent of the $L^2$-scaling symmetry of \eqref{eq:CM} on $\R$. However, we note {the important fact} that $\Cc_k$ does {\em not preserve} the $L^2$-mass, but rather increases it by the integer factor $k$.

2) The use of $\Cc_k$ will enable us to construct multi-bubble blow-up solutions for \eqref{eq:CS} from the single-bubble blow-up solution given by Theorem \ref{thm:blowup}. Note also that for chiral solutions, we can write
$$
{\Cc_k u(t,z)} = \sqrt{k} u(k^2 t, z^k) \quad \mbox{for} \quad (t,z) \in I_k \times \D.
$$
}
\end{remarks*}

\begin{proof}
{The case $k=1$ is immediate. Hence let $k\geq2$ be an integer.} Write $y=kx$ and $\tau = k^2 t$ for $x \in \T$ and ${t \in I_k}$. If $F(y) = \sum_{n \in \Z} \widehat{F}_n \eu^{iny}\in H^\infty(\T)$, then
$$
F(kx) = \sum_{n \in \Z} \widehat{F}_n \eu^{in kx}.
$$
Consequently, the sign of a Fourier mode is unchanged by the covering map $x \mapsto kx$, and therefore
$$
\Pi_x(F(kx)) =(\Pi_y F)(kx).
$$
Since $D_x(F(kx)) = k(D_yF)(kx)$, the function $u_k(t,x) := \Cc_k u(t,x) = \sqrt{k} u(k^2t,kx)$ satisfies
$$
i \pt_t u_k = i k^{5/2} (\pt_\tau u)(\tau, y), \quad \pt_x^2 u_k = k^{5/2} {(\pt_y^2 u)}(\tau, y), 
$$
$$
D_x \Pi_x(|u_k|^2) u_k = k^{5/2} (D_y \Pi_y(|u|^2) u)(\tau, y).
$$
From this we easily deduce that $u_k \in C(I_k; H^\infty(\T))$ solves \eqref{eq:CS} with initial datum $u_k(0,x) = \sqrt{k} u_0(kx)$ for $x \in \T$.

To show (i), let $f(x) = \sum_{n \in \Z} \widehat{f}_n \eu^{inx} \in H^\infty(\T)$ and note that
$$
f_k(x) := \sqrt{k} f(kx) = \sqrt{k} \sum_{n \in \Z} \widehat{f}_n \eu^{i kn x}. 
$$
For $s \geq 0$, we obtain
$$
\| f_k \|_{\dot{H}^s}^2 = k^{1+2s} \sum_{n \in \Z}  {|n|^{2s}} |\widehat{f}_n|^2 = k^{1+2s} \| f \|_{\dot{H}^s}^2.
$$
Hence $\| f_k \|_{L^2}^2 = k {\| f \|_{L^2}^2}$ and we readily deduce the bounds in (i).

Finally, if $u \in C(I; H^\infty_+(\T))$ is a chiral solution, we see that $u_k \in C(I_k; H^\infty_+(\T))$ as well, since $f_k(x)=\sqrt{k} f(kx) \in H^\infty_+(\T)$ for $f \in H^\infty_+(\T)$ and any integer $k \geq 2$. This proves (ii).
\end{proof}

{Remarkably,} the covering transformation preserves the finite-gap property when applied to the finite-gap potentials in Theorem \ref{thm:blowup}. Thus let us consider
\be \label{def:u_0_app}
u_0(z) = A_p z^m \beta_p(z) \left ( 1- \frac{2}{1- \bar{p}z} \right ) \quad \mbox{with} \quad \beta_p(z) = \frac{z-p}{1- \bar{p} z} 
\ee
with some $m \in \Z_{\geq 0}$, $p \in \D \setminus \{ 0 \}$ and the factor
$$
A_p = \eu^{i \phi} \sqrt{\frac{1-|p|^2}{1+|p|^2} }.
$$
We have the following result.
\begin{lemma} \label{lem:covering_finite_gap}
Let $u_0 \in H^\infty_+(\T)$ be given by \eqref{def:u_0_app}. For any integer $k \geq 2$, we define
$$
u_0^{(k)}(z) := k^{1/2} u_0(z^k) = k^{1/2} A_p z^{km} \frac{z^k-p}{1- \bar{p} z^k} \left ( 1- \frac{2}{1-\bar{p} z^k} \right ).
$$
Then $u_0^{(k)} \in H^\infty_+(\T)$ is a finite-gap potential for \eqref{eq:CS} given by
$$
{u_0^{(k)}(z) = z^{km} \prod_{j=1}^{k} \left ( \frac{z-q_j}{1- \bar{q}_j z} \right ) \cdot \left ( a + \sum_{j=1}^k \frac{c_j}{1- \bar{q}_j z} \right )},
$$
where $q_1, \ldots, q_k \in \D \setminus \{ 0 \}$ {are the roots} of $q^k = p$ and the parameters $a, c_1, \ldots{,} c_k \in \C$ satisfy the constraints
$$
{\bar{a} c_j + \sum_{\ell=1}^k \frac{c_j \bar{c}_\ell}{1- \bar{q}_j q_\ell} = 2 \quad \mbox{for} \quad j = 1, \ldots, k.}
$$ 
\end{lemma}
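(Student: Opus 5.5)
The plan is to verify directly that $u_0^{(k)}$ has the rational form \eqref{eq:finite_gap} with $r=k$, $m_0=km$, $m_1=\ldots=m_k=2$ (so $N=km+2k$), and that the constraints \eqref{eq:constraints} hold with right-hand side $m_j=2$. By Badreddine's classification recalled in Subsection \ref{subsec:finite_gap}, every function of this form satisfying the constraints is a finite-gap potential.

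\textbf{Blaschke factor.} Let $q_1,\ldots,q_k$ be the $k$ distinct $k$-th roots of $p$. Since $0<|p|<1$, they lie in $\D\setminus\{0\}$ and are pairwise distinct. Their conjugates $\bar q_1,\ldots,\bar q_k$ are the $k$-th roots of $\bar p$. Hence
$$
z^k-p=\prod_{j=1}^k (z-q_j), \qquad 1-\bar p z^k=\prod_{j=1}^k(1-\bar q_j z),
$$
so $\beta_p(z^k)=\prod_{j=1}^k \beta_{q_j}(z)$. Each factor appears to the power $1=m_j-1$, consistent with $m_j=2$. The monomial $z^{km}$ supplies $m_0=km\le N-1$.

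\textbf{Partial fractions.} The key tool is the partial-fraction identity
$$
\sum_{\ell=1}^k \frac{1}{1-wq_\ell}=\frac{k}{1-w^k p}\qquad (|w|<1),
$$
and its analogue with $\bar q_\ell$ and $\bar p$, which is the identity already used in the multi-bubble proof. I would prove it by expanding each term as a geometric series and using
$$
\sum_{\ell=1}^k q_\ell^n=\begin{cases} k\,p^{n/k}, & k\mid n,\\ 0, & \text{otherwise.}\end{cases}
$$
Applied with $w=z$ to the roots $\bar q_\ell$ of $\bar p$, it gives
$$
\frac{1}{1-\bar p z^k}=\frac1k\sum_{j=1}^k \frac{1}{1-\bar q_j z}.
$$
Therefore
$$
k^{1/2}A_p\left(1-\frac{2}{1-\bar pz^k}\right)=a+\sum_{j=1}^k\frac{c_j}{1-\bar q_j z},
\qquad a=\sqrt{k}\,A_p\neq 0,\quad c_j=-\frac{2A_p}{\sqrt k}.
$$
In particular $a\neq 0$, as required when $m_0=km\neq 0$.

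\textbf{Constraints.} For fixed $j$, we have $\bar a c_j=-2|A_p|^2$ and $c_j\bar c_\ell=4|A_p|^2/k$ for every $\ell$. Apply the partial-fraction identity with $w=\bar q_j$, so that $w^k p=|p|^2$:
$$
\sum_{\ell=1}^k\frac{1}{1-\bar q_j q_\ell}=\frac{k}{1-|p|^2}.
$$
The constraint expression then becomes
$$
-2|A_p|^2+\frac{4|A_p|^2}{1-|p|^2}=|A_p|^2\,\frac{2(1+|p|^2)}{1-|p|^2}=2,
$$
using $|A_p|^2=(1-|p|^2)/(1+|p|^2)$. This is exactly the same value as in the $k=1$ constraint \eqref{eq:constraints2}.

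\textbf{Main obstacle.} The computations themselves are short. The only delicate point is the roots-of-unity summation, which must be applied twice: once to decompose $1/(1-\bar p z^k)$ into simple fractions, and once to evaluate the Gram-type sum in the constraint. The membership $u_0^{(k)}\in H^\infty_+(\T)$ is immediate because $u_0^{(k)}$ is rational with all poles outside $\ov{\D}$.
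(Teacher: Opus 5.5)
Your proposal is correct and follows the paper's proof: the same factorization $\beta_p(z^k)=\prod_{j}\beta_{q_j}(z)$, the same partial-fraction identity giving $a=\sqrt{k}\,A_p$ and $c_j=-2A_p/\sqrt{k}$, and the same evaluation $\sum_{\ell}(1-\bar q_j q_\ell)^{-1}=k/(1-|p|^2)$ in the constraint. You additionally prove the roots-of-unity identity and check the side conditions $m_0\le N-1$ and $a\neq 0$, which the paper leaves implicit.
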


\begin{proof}
This follows from direct calculations. Choose some $k$-th root $q \in \D \setminus \{ 0 \}$ of $q^k = p$ and set
$$
q_j = q \eu^{2 \pi i (j-1)/k}  \quad \mbox{with $j=1, \ldots, k$},
$$
{These points run through all roots of the equation $q^k=p$.} Note that all $q_j \in \D \setminus \{ 0 \}$ are distinct. It is easy to check that
$$
\prod_{j=1}^k \beta_{q_j}(z) = \prod_{j=1}^k \frac{z- q_j}{1-\bar{q}_jz} = \beta_p(z^k) = \frac{z^k-p}{1-\bar{p} z^k}, \quad
\sum_{j=1}^k \frac{1}{1- \bar{q}_j z} = \frac{k}{1- \bar{p} z^k}.
$$
Using these identities, we find
$$
u_0^{(k)}(z) = k^{1/2} A_p z^{km} \beta_p(z^k) \left ( 1- \frac{2}{1-\bar{p}z^k} \right ) = z^{km} \prod_{j=1}^k \beta_{q_j}(z) \left ( a + \sum_{j=1}^k \frac{c_j}{1- \bar{q}_j z } \right ),
$$
where we set
$$
a := k^{1/2} A_p, \quad c_j := -\frac{2 a}{k} = -\frac{2 A_p}{k^{1/2}}.
$$
To show that $u_0^{(k)} \in H^\infty_+(\T)$ is a finite-gap potential as in \eqref{eq:finite_gap}, it remains to check the constraints \eqref{eq:constraints} with the distinct zeros $q_1,\ldots, q_k \in \D \setminus \{ 0 \}$,  $m_0 = km$ and $m_j=2$ for $j=1, \ldots, k$. Indeed, for {each fixed $j$}, we note
$$
{\sum_{\ell=1}^k \frac{1}{1- \bar{q}_j q_\ell} = \frac{k}{1- |q|^{2k}} = \frac{k}{1-|p|^2}.}
$$
Since $a = k^{1/2} A_p$ and $c_j = -2 A_p/k^{1/2}$, we obtain
\begin{align*}
{\bar{a} c_j + \sum_{\ell=1}^k \frac{c_j \bar{c}_\ell}{1- \bar{q}_j q_\ell}} & = -2 |A_p|^2 + \frac{4 |A_p|^2}{k} \frac{k}{1-|p|^2} \\
& = |A_p|^2 \left (-2 + \frac{4}{1-|p|^2} \right ) = 2 = m_j.
\end{align*}
This completes the proof.
\end{proof}

\end{appendix}

\end{document}